\documentclass{article} 


\usepackage[
  margin=3cm,
  includefoot,
  footskip=30pt,
]{geometry}

\usepackage[utf8]{inputenc} 
\usepackage[T1]{fontenc}    
\usepackage[pagebackref]{hyperref}
\usepackage{url}            
\usepackage{booktabs}       
\usepackage{amsfonts}       
\usepackage{nicefrac}       
\usepackage{microtype}      
\usepackage{xcolor}         

\definecolor{darkgreen}{rgb}{0.0,0.5,0.0}

\hypersetup{
	colorlinks=true,        
	linkcolor=blue,         
	filecolor=magenta,
	citecolor=darkgreen,         
	urlcolor=blue           
}
\renewcommand*{\backrefalt}[4]{%
    \ifcase #1 \footnotesize{(Not cited.)}%
    \or        \footnotesize{(Cited on page~#2)}%
    \else      \footnotesize{(Cited on pages~#2)}%
    \fi}

\usepackage{amsfonts}
\usepackage{amsmath}
\usepackage{amsthm}
\usepackage{amssymb}
\usepackage{dsfont}
\usepackage{comment}
\usepackage{multirow}
\usepackage{multicol}
\usepackage{threeparttable}

\usepackage{xcolor}
\usepackage{color}
\usepackage{graphicx}
\usepackage{pifont}
\newcommand{\xmark}{\ding{55}}%
\newcommand{\algname}[1]{{\sf#1}\xspace}
\newcommand{\eqdef}{\coloneqq}

\usepackage{verbatim}
\usepackage{xspace} 

\usepackage{enumerate}
\usepackage{enumitem}

\usepackage{subcaption}


\providecommand{\algname}[1]{{\sf#1}\xspace}
\providecommand{\lin}[1]{\ensuremath{\left\langle #1 \right\rangle}}

\providecommand{\norm}[1]{\left\lVert#1\right\rVert}

\providecommand{\makecellnew}[1]{{\renewcommand{\arraystretch}{0.8}\begin{tabular}{c} #1 \end{tabular}}}

  \providecommand{\R}{\mathbb{R}} 
  
  \DeclareMathOperator{\E}{{\mathbb E}}
  \providecommand{\wtilde}[1]{\widetilde{#1}}
  \providecommand{\Eb}[1]{{\mathbb E}\left[#1\right] }       
  \providecommand{\EEb}[2]{{\mathbb E}_{#1}\left[#2\right] } 

  \DeclareMathOperator*{\argmin}{arg\,min}

  \providecommand{\0}{\mathbf{0}}

  \providecommand{\bb}{\mathbf{b}}

  \providecommand{\ee}{\mathbf{e}}

  \renewcommand{\gg}{\mathbf{g}}
  \providecommand{\hh}{\mathbf{h}}

  \providecommand{\xx}{\mathbf{x}}
  \providecommand{\yy}{\mathbf{y}}

  \providecommand{\mA}{\mathbf{A}}

  \providecommand{\mI}{\mathbf{I}}


  \providecommand{\cC}{\mathcal{C}}
  \providecommand{\cD}{\mathcal{D}}

  \providecommand{\cO}{\mathcal{O}}

  \usepackage{bm}

%
\RequirePackage[colorinlistoftodos,bordercolor=orange,backgroundcolor=orange!20,linecolor=orange,textsize=scriptsize]{todonotes}
\providecommand{\mycomment}[3]{\todo[caption={},color=#3!20,inline]{\textbf{#1: }#2}}%
\providecommand{\myinlinecomment}[3]{%
  {\color{#1}#2: #3}}%
\newcommand\commenter[2]%
{%
  \expandafter\newcommand\csname i#1\endcsname[1]{\myinlinecomment{#2}{#1}{##1}}
  \expandafter\newcommand\csname #1\endcsname[1]{\mycomment{#1}{##1}{#2}}
}
  



\newtheorem{lemma}{Lemma}
\newtheorem{corollary}[lemma]{Corollary}

\newtheorem{definition}{Definition}
\newtheorem{remark}[lemma]{Remark}
\newtheorem{assumption}{Assumption}
\newtheorem{theorem}{Theorem}

\usepackage{url}

\renewcommand{\epsilon}{\varepsilon}

\usepackage{algorithm}
\usepackage[noend]{algpseudocode}

\errorcontextlines\maxdimen

\makeatletter
    \newcommand*{\algrule}[1][\algorithmicindent]{\makebox[#1][l]{\hspace*{.5em}\thealgruleextra\vrule height \thealgruleheight depth \thealgruledepth}}%
\newcommand*{\thealgruleextra}{}
\newcommand*{\thealgruleheight}{.75\baselineskip}
\newcommand*{\thealgruledepth}{.25\baselineskip}

\newcount\ALG@printindent@tempcnta
\def\ALG@printindent{%
    \ifnum \theALG@nested>0
        \ifx\ALG@text\ALG@x@notext
        \else
            \unskip
            \addvspace{-1pt}
            \ALG@printindent@tempcnta=1
            \loop
                \algrule[\csname ALG@ind@\the\ALG@printindent@tempcnta\endcsname]%
                \advance \ALG@printindent@tempcnta 1
            \ifnum \ALG@printindent@tempcnta<\numexpr\theALG@nested+1\relax
            \repeat
        \fi
    \fi
    }%
\usepackage{etoolbox}
\patchcmd{\ALG@doentity}{\noindent\hskip\ALG@tlm}{\ALG@printindent}{}{\errmessage{failed to patch}}
\makeatother

\newbox\statebox
\newcommand{\myState}[1]{%
    \setbox\statebox=\vbox{#1}%
    \edef\thealgruleheight{\dimexpr \the\ht\statebox+1pt\relax}%
    \edef\thealgruledepth{\dimexpr \the\dp\statebox+1pt\relax}%
    \ifdim\thealgruleheight<.75\baselineskip
        \def\thealgruleheight{\dimexpr .75\baselineskip+1pt\relax}%
    \fi
    \ifdim\thealgruledepth<.25\baselineskip
        \def\thealgruledepth{\dimexpr .25\baselineskip+1pt\relax}%
    \fi
    \State #1%
    \def\thealgruleheight{\dimexpr .75\baselineskip+1pt\relax}%
    \def\thealgruledepth{\dimexpr .25\baselineskip+1pt\relax}%
}

\usepackage{mathtools}
\DeclarePairedDelimiterX{\inp}[2]{\langle}{\rangle}{#1, #2}
\DeclarePairedDelimiterX{\cbr}[1]{\{}{\}}{#1} 
\DeclarePairedDelimiterX{\rbr}[1]{(}{)}{#1} 
\DeclarePairedDelimiterX{\sbr}[1]{[}{]}{#1} 

\usepackage{thm-restate}

\usepackage{cleveref}

\usepackage{natbib}

\title{EControl: Fast Distributed Optimization with Compression and Error Control}

\author{Yuan Gao\thanks{Equal contribution.} \\ 
\small CISPA\thanks{CISPA Helmholtz Center for Information Security} \\ 
\small \texttt{yuan.gao@cispa.de}
\and
Rustem Islamov\footnotemark[1]\\
\small University of Basel\\
\small \texttt{rustem.islamov@unibas.ch}
\and
Sebastian Stich\\
\small CISPA\footnotemark[2]\\
\small \texttt{stich@cispa.de}
}

\usepackage[font=small]{caption}

\begin{document}

\maketitle

\begin{abstract}
Modern distributed training relies heavily on communication compression to reduce the communication overhead. In this work, we study algorithms employing a popular class of contractive compressors in order to reduce communication overhead. However, the naive implementation often leads to unstable convergence or even exponential divergence due to the compression bias. Error Compensation (\algname{EC}) is an extremely popular mechanism to mitigate the aforementioned issues during the training of models enhanced by contractive compression operators. Compared to the effectiveness of \algname{EC} in the data homogeneous regime, the understanding of the practicality and theoretical foundations of \algname{EC} in the data heterogeneous regime is limited. Existing convergence analyses typically rely on strong assumptions such as bounded gradients, bounded data heterogeneity, or large batch accesses, which are often infeasible in modern machine learning applications. We resolve the majority of current issues by proposing \algname{EControl}, a novel mechanism that can regulate error compensation by controlling the strength of the feedback signal. We prove fast convergence for \algname{EControl} in standard strongly convex, general convex, and nonconvex settings without any additional assumptions on the problem or data heterogeneity. We conduct extensive numerical evaluations to illustrate the efficacy of our method and support our theoretical findings.
\end{abstract}

\section{Introduction}
The size of modern neural networks has increased dramatically. Consequently, the data required for efficient training is huge as well, and accumulating all available data into a single machine is often infeasible. Because of these considerations, the training of large language models \citep{shoeybi2019megatron}, generative models \citep{ramesh2021zero, ramesh2022hierarchical}, and others \citep{wang2020survey} is performed in a distributed fashion with decentralized data.  
Another quickly developing instance of distributed training is Federated Learning (FL) \citep{FEDLEARN, kairouz2019advances} where the goal is to train a single model directly on the devices keeping their local data private. 

The communication bottleneck is a main factor that limits the scalability of distributed deep learning training~\citep{seide20141,strom15_interspeech}. 
Methods that use lossy compression have been proposed as a remedy, with great success~\citep{seide20141,QSGD}.
It has been observed that {\it error compensation} (\algname{EC}) mechanisms are crucial to obtain high compression ratios~\citep{seide20141,stich2018sparsified}, and variants of these techniques~\citep{vogels2019PowerSGD} are already integrated in standard deep learning frameworks such as PyTorch~\citep{pazske2019pytorch} and been successfully used in the training of transformer models~\citep{ramesh2021zero}.

These methods were primarily developed for data center training scenarios where training data is shuffled between nodes. This data uniformity was also a limiting assumption in early analyses for distributed \algname{EC}~\citep{cordonnier2018convex,stich2020error}.
To make compression suitable for training beyond data centers, such as federated learning, it is essential to take data heterogeneity (also termed \emph{client drift} or \emph{bias})~\citep{SCAFFOLD,DIANA} into account. 
\citet{DIANA} designed a method for this scenario, but it can only support the restrictive class of unbiased compressors. 
It turned out, that handling  arbitrary compressors is a challenge. A line of work developed the \algname{EF21} algorithm~\citep{richtarik2021ef21} that can successfully handle the bias/drift, but cannot obtain a linear speedup in parallel training, i.e.\ the training time does not decrease when more devices are used for training.

One of the main difficulties in developing a method that simultaneously combats client drift and maintains linear acceleration has been the complicated interaction between the mechanisms of bias and error correction.
In this work, we propose \algname{EControl}, 
a novel mechanism that can regulate error compensation by controlling the strength of the feedback signal.
This allows us to overcome both challenges simultaneously. Our main contributions can be summarized as:

\paragraph{New Method.} We propose a novel method \algname{EControl} that provably converges
(i) with arbitrary contractive compressors,
(ii) under arbitrary data distribution (heterogeneity),
(iii) and obtains linear parallel speedup.

\paragraph{A Practical Algorithm.} 
\algname{EControl} does not need to resort to impractical theoretical tricks, such as large batch sizes or repeated communication rounds, but instead is a lightweight extension that can be easily added to existing \algname{EC} implementations.

\paragraph{Fast Convergence.} We demonstrate the convergence guarantees in all standard regimes:\ strongly convex, general convex, and nonconvex functions. In all the cases complexities are asymptotically tight with stochastic gradients. In the nonconvex case, the rate in the noiseless setting matches the known lower bound \citep{he2023lower}. In the strongly convex and general convex settings, we achieve the standard linear and sublinear convergence respectively, with tight dependency on the compression parameter in the noiseless setting. To the best of our knowledge, our work is the first demonstrating that in strongly convex and general convex regimes without additional assumptions on the problem. 

\paragraph{Empirical Study.} We conduct extensive empirical evaluations that support our theoretical findings and show the efficacy of the new method.

\section{Communication Bottleneck and Error Compensation}
In our work, we analyze algorithms combined with compressed communication. In particular, we consider methods utilizing practically helpful contractive compression operators. 
\begin{definition}
    We say that a (possibly randomized) mapping  $\cC \colon \R^d \to \R^d$ is a contractive compression operator if for some constant $0 < \delta \le 1$ it holds 
\begin{equation}
 \Eb{\norm{\cC(\xx) -\xx}^2} \leq (1-\delta)\norm{\xx}^2\, \quad \forall \xx \in \R^d. \label{def:compressor}
\end{equation}
\end{definition}
The classic example satisfying \eqref{def:compressor} is Top-K \citep{stich2018sparsified}, which preserves the K largest (in absolute value) entries of the input, and zeros out the remaining entries. The class of contractive compressors also includes sparsification  \citep{alistarh2018convergence,stich2018sparsified} and quantization operators \citep{wen2017terngrad, QSGD, bernstein2018signsgd,horvath2019natural}, and many others \citep{zheng2019communication, beznosikov2020biased, vogels2019PowerSGD, safaryanFedNL2022, islamov20223PC}.
In this section, we review related works on error compensation, with a focus on the works that also consider contractive compressors.

\subsection{Selected related works on Error Compensation}

The \algname{EC} mechanism~\cite{seide20141} was first analyzed 
for the single worker case in~\cite{stich2018sparsified,karimireddy2019error}. Extensions to the distributed setting were first conducted under the assumption of homogeneous (IID) data, a constraint imposed either implicitly by assuming bounded gradients~\cite{cordonnier2018convex,alistarh2018convergence} or explicitly without the bounded gradient assumption~\cite{stich2020error}. \algname{Choco-SGD} was designed for communication compression over arbitrary networks and analyzed under similar IID assumptions~\citet{koloskova2019Decentralized, koloskova2020Decentralized}.
The analyses of distributed \algname{EC} where further developed in~\cite{lian2017can,beznosikov2020biased,stich2020communication}.

The \algname{DIANA} algorithm by~\citet{DIANA} was proposed as a solution for the heterogeneous data case, and introduced a mechanism that was able to account for the drift/bias on nodes with different data, but only when unbiased compressors were used. However, their result inspired many follow-up works that combined contractive compressors with their bias correction mechanism (which requires an additional unbiased compressor that doubles the communication cost of the algorithm per iteration), such as~\cite{gorbunov2020linearly,stich2020communication,qian2021errorcompensatedSGD}.

Recently, \citet{richtarik2021ef21} introduced \algname{EF21} that fully supports contractive compressors and presented strong analysis in the full gradient (i.e.\ noiseless) regime. Nevertheless, the main problem of \algname{EF21} is weak convergence guarantees in the stochastic regime, i.e.\ when clients have access to stochastic gradient estimators only. The analysis of \algname{EF21} and its modifications require large batches and do not have linear speedup with $n$.
Moreover, they demonstrate poor dependency on the compression parameter $\delta$. 
\citet{zhao2022beer} improves the dependency on the compression parameter $\delta$ in a nonconvex setting but still requires large batches. Recently, \citet{fatkhullin2023momentum} resolved those issues by introducing a momentum-based variant of \algname{EF21}. They demonstrate asymptotically optimal complexity for nonconvex losses. 

A line of work studied accelerated methods with communication compression \citep{li2020acceleration, li2021canita, qian2021errorcompensatedLSVRG, qian2023catalyst, he2023lower}, each with different additional requirements on the problem, the compressor class, or  stochasticity and batch size of the gradient oracle. These methods show accelerated rates matching lower bounds in some regimes but are typically impractical due to those requirements and many parameter tuning. 

\subsection{Existing problems with Error Compensation}

Below we list the main problems of existing theoretical results and  highlight a historical overview of the main existing theoretical results in \Cref{tab:table2}. 

\paragraph{Additional Communication.} \citet{gorbunov2020linearly, stich2020communication} modify the original \algname{EC} mechanism following the \algname{DIANA} method. However, their approach requires an additional unbiased compressor. Such an idea allows for building a better sequence of compressed gradient estimators but with a doubled per-iteration communication cost. 

\paragraph{Strong Assumptions.} Many earlier theoretical results for \algname{EC} require strong assumptions, such as either the bounded gradient assumption \citep{koloskova2019Decentralized, koloskova2020Decentralized} or globally bounded dissimilarity assumption \citep{lian2017can, huang2022lowerbounds, he2023lower, li2023analysis}\footnote{This assumption heavily restricts the class of functions and typically does not hold in Federated Learning.}. Besides, \citet{makarenko2023adaptive} analyses \algname{EF21}-based algorithm under bounded iterates assumption.

\paragraph{Large Batches.} \citet{fatkhullin2023momentum} gives an example where \algname{EF21} fails to converge with small batch size. \algname{NEOLITHIC}  \citep{huang2022lowerbounds, he2023lower} matches lower bounds with large batch requirements combined with multi-stage execution for hyperparameter tuning. These requirements make these methods less suitable for DL training, where small batches are known to improve generalization performance and convergence~\citep{lecun2012efficient, wilson003general, keskar2017largebatch}.

\paragraph{Suboptimal Convergence Rates.} Current theoretical analysis of distributed algorithms combined with contractive compressors  do not match known lower bounds in the nonconvex regime~\citep{huang2022lowerbounds, he2023lower}. \citet{zhao2022beer, fatkhullin2021ef21} do not achieve a speedup in $n$ in the asymptotic regime 
while in contrast \citet{koloskova2020Decentralized} has suboptimal rates in the low noise regime (e.g.\ full gradient computations). We are aware of only the works by \citet{fatkhullin2023momentum} and \citet{he2023lower}, but the latter requires large batches at each iteration.

\paragraph{Missing a Practical Method for the Convex Regime.} 

\citet{he2023lower} analyzed the accelerated \algname{NEOLITHIC} in general convex and strongly convex regimes with large batch requirements. In fact, this method is mostly of a theoretical nature as the choice of the optimal parameters relies on the gradient variance at the solution, and it was shown to be slow in practice \citep{fatkhullin2023momentum}. To the best of our knowledge, there is no distributed algorithm utilizing only a contractive compression operator and provably converging in general convex and strongly convex regimes under standard assumptions. 

In our work, we propose a novel method, which we call \algname{EControl}, and push the theoretical analysis of algorithms combined with contractive compression operators further in several directions.

\begin{table}[t]
    \centering 
    \caption{Theoretical comparison of error compensated algorithms using only contractive compressors for distributed optimization in a heterogeneous setting. We omit the comparison in the general convex regime since most of the works focus on strongly convex and general nonconvex settings. {\bf nCVX} = supports {\bf n}on{\bf c}on{\bf v}e{\bf x} functions; {\bf sCVX} = supports {\bf s}trongly {\bf c}on{\bf v}e{\bf x} functions. We present the convergence $\Eb{\|\nabla f(\xx_{\text{out}})\|^2} \le \varepsilon$ in the nonconvex and $\Eb{f(\xx_{\text{out}})-f^\star}$ in the strongly convex regimes for specifically chosen $\xx_{\text{out}}.$ Here $F_0\eqdef f(\xx_0) - f^\star.$}
    \label{tab:table2}
    \resizebox{\textwidth}{!}{
    \renewcommand{\arraystretch}{1.7}
    \begin{tabular}{ccc}
    \toprule[1.3pt]
    {\bf Algorithm} 
    & {\bf nCVX} 
    & {\bf sCVX}
    \\
    \toprule
    \makecellnew{\algname{EC} \\
    \cite{seide20141}}
    & $\frac{LF_0\sigma^2}{n\varepsilon^2}
    + \frac{LF_0(\sigma+\zeta/\sqrt{\delta})}{\sqrt{\delta} \varepsilon^{3/2}}
    + \frac{LF_0}{\delta\varepsilon}~{}^{\text{(a)}}$ 
    & $\frac{\sigma^2}{\mu n \varepsilon} 
    + \frac{\sqrt{L}(\sigma+\zeta/\sqrt{\delta})}{\mu\sqrt{\delta\varepsilon}}
    + \frac{L}{\mu\delta}~{}^{\text{(a)}}$
    \\ \midrule
    \makecellnew{\algname{Choco-SGD} \\
    \cite{koloskova2020Decentralized}}
    & $\frac{L_{\max}F_0\sigma^2}{n\varepsilon^2}
    + \frac{L_{\max}F_0G}{\delta \varepsilon^{3/2}}
    + \frac{L_{\max}F_0}{\delta\varepsilon}^{\text{(b)}}$ 
    & $\frac{\sigma^2}{\mu n \varepsilon} 
    + \frac{\sqrt{L_{\max}}G}{\mu\delta\varepsilon^{1/2}}
    + \frac{G^{2/3}}{\mu^{1/3}\delta\varepsilon^{1/3}}^{\text{(b)}}$
    \\ \midrule
    \makecellnew{\algname{EF21-SGD} \\ \citep{fatkhullin2021ef21}} 
    & $\frac{\wtilde{L}F_0\sigma^2}{\delta^3\varepsilon^{2}}
    + \frac{\wtilde{L}F_0}{\delta\varepsilon}~{}^{\text{(c)}}$ 
    & $\frac{\wtilde{L}\sigma^2}{\mu^2\delta^3\varepsilon}
    + \frac{\wtilde{L}}{\mu\delta}~{}^{\text{(c)}}$
    \\ \midrule
    \makecellnew{\algname{EF21-SGD2M} \\ \citep{fatkhullin2023momentum}} 
    & $\frac{LF_0\sigma^2}{n\epsilon^2} 
    + \frac{LF_0\sigma^{2/3}}{\delta^{2/3}\epsilon^{4/3}} 
    + \frac{\wtilde{L}F_0+\sigma\sqrt{LF_0}}{\delta\epsilon}~{}^{\text{(d)}}$
    & \xmark
    \\ \midrule
    \makecellnew{\algname{EControl} \\ {\bf This work}} 
    &  $ \frac{LF_0\sigma^2}{n\epsilon^2} 
    + \frac{LF_0\sigma}{\delta^2\epsilon^{3/2}}
    + \frac{\wtilde{L}F_0}{\delta\epsilon}$
    & $ \frac{\sigma^2}{\mu n\varepsilon} 
    + \frac{\sqrt{L}\sigma}{\mu\delta^2 \varepsilon^{1/2}} 
    + \frac{\wtilde L}{\mu\delta}$
    \\ \bottomrule
    \makecellnew{Lower Bounds \\ \citep{he2023lower}} 
    & $\frac{LF_0\sigma^2}{n\varepsilon^2} 
    + \frac{LF_0}{\delta\varepsilon}
    ~{}^{\text{(e)}}$
    & $\frac{\sigma^2}{\mu n\varepsilon} 
    + \frac{1}{\delta}\sqrt{\frac{L}{\mu}}~{}^{\text{(f)}}$
    \\ \bottomrule[1.3pt]
  \end{tabular}
  }
  \begin{tablenotes}
      {\scriptsize 
        \item (a) The analysis assumes a gradient dissimilarity bound of local gradients $\frac{1}{n}\sum_{i=1}^n\|\nabla f_i(\xx)\|^2\le \zeta^2 + \|\nabla f(\xx)\|^2$ \citep{stich2020communication}.
        \item (b) The analysis is done under the second moment bound of the stochastic gradients $\Eb{\|\gg^i(\xx)\|^2} \le G^2$. We emphasize that strongly convex functions do not satisfy this assumption.
        \item (c) This result requires the assumption that {\it each} batch size is at least $\frac{\sigma^2}{\delta^2\varepsilon}$ 
        in nonconvex regime and $\frac{\sigma^2}{\mu\delta^2\varepsilon}$
        in the strongly convex regime.
        \item (d) The last term becomes $\frac{\wtilde{L}F_0}{\delta\varepsilon}$ if the initial batch size is at least $\frac{\sigma^2}{LF_0}$.
        \item (e) The analysis is done under gradient disimilarity assumption $\frac{1}{n}\sum_{i=1}^n \|\nabla f_i(\xx)-\nabla f(\xx)\|^2\le \zeta^2$. Moreover, the result requires large mini-batches and performing $1/\delta$ communication rounds per iteration.
        \item (f) The result requires large mini-batches and performing $1/\delta$ communication rounds per iteration. Moreover, the optimal parameters are chosen with an assumption that $\frac{1}{n}\sum_{i=1}^n\|\nabla f_i(\xx^\star)\|^2 = b^2$ is known.
        }
    \end{tablenotes}  
\end{table}

\section{Problem Formulation and Assumptions}

We consider the distributed optimization problem
\begin{align}
f^\star \coloneqq \min_{\xx \in \R^d} \left[f(\xx)\coloneqq \frac{1}{n}\sum_{i=1}^n f_i(\xx)\right], \label{eq:main}
\end{align}
where $\xx$ represents the parameters of a model we aim to train, and the objective function $f \colon \R^d \to \R$ is decomposed into $n$ terms $f_i \colon \R^d \to \R, i \in [n]\eqdef \{1, \dots, n\}$. Each individual function $f_i(\xx) \eqdef \E_{\xi_i \sim \cD_i}[f_i(\xx,\xi_i)]$ is a loss associated with a local dataset $\cD_i$ available to client $i$. We let $\xx^\star\eqdef \argmin_{\xx\in\R^d} f(\xx)$.

We analyze the centralized setting where the clients are connected with each other through a server. Typically, the server receives the messages from the clients and transfers back the aggregated information. In contrast to many prior works, we study arbitrarily heterogeneous setting and do not make any assumptions on the heterogeneity level, i.e. local data distributions might be far away from each other. We now list the main assumptions on the optimization problem~\eqref{eq:main}.

First, we introduce assumptions on $f$ and $f_i$ that we use to derive convergence guarantees. 

\begin{assumption} We assume that the average function $f$ has $L$-Lipschitz gradient, and each individual function $f_i$ has $L_i$-Lipschitz gradient, i.e. for all $\xx, \yy \in \R^d$, and $i\in [n]$ it holds
\begin{align}
 \norm{\nabla f(\yy)- \nabla f(\xx)} \leq L \norm{\yy - \xx}, \quad 
 \norm{\nabla f_i(\yy)- \nabla f_i(\xx)} \leq L_i \norm{\yy - \xx}.\label{def:lsmooth}
\end{align}
\end{assumption}
We define a constant $\wtilde{L}\coloneqq \sqrt{\frac{1}{n}\sum_{i=1}^nL_i^2}$.\footnote{Note that the following chain of inequalities always hold: $L \le \wtilde{L} \le L_{\max} \eqdef \max_{i \in [n]} L_i.$} We note that most works derive convergence guarantees with maximum smoothness constant $L_{\max}\eqdef \max_{i\in[n]} L_i$ which can be much larger than $\wtilde{L}.$ Next, in some cases we assume $\mu$-strong convexity of $f$.
\begin{assumption} We assume that the average function $f$ is $\mu$-strongly convex, i.e.  for  all $\xx, \yy \in \R^d$ it holds
    \begin{align}
    f(\xx) &\geq f(\yy) + \lin{\nabla f(\yy),\xx-\yy} + \frac{\mu}{2}\norm{\xx-\yy}^2.\label{def:strong_convex} 
\end{align}
We say that $f$ is convex if it satisfies~\eqref{def:strong_convex} with $\mu=0.$
\end{assumption}
We highlight that we need the strong convexity (convexity resp.) only of the average function $f$ while individual functions $f_i$ do not have to satisfy it, and they can be even nonconvex. On top of that, in our analysis we apply the strong convexity (convexity resp.) around $\xx^\star$ only, thus, it is sufficient to assume that \eqref{def:strong_convex} holds for any $\yy$ and $\xx\equiv \xx^\star$. In this case, we say that a function is $\mu$-strongly quasi-convex (quasi-convex resp.) around $\xx^\star$; see \citep{stich2020error}. 

Finally, we make an assumption on the noise of local gradient estimators used by the clients.
\begin{assumption}
We assume that we have access to a gradient oracle $\gg^i(\xx) \colon \R^d \to \R^d$  for each local function $f_i$ such that for all $\xx \in \R^d$ and $i \in [n]$ it holds
\begin{equation}\label{asmp:bounded_variance}
 \Eb{\gg^i(\xx)} = \nabla f_i(\xx), \quad  \Eb{\|\gg^i(\xx)-\nabla f_i(\xx)\|^2} \le \sigma^2.
\end{equation}
\end{assumption}
Note that mini-batches are also allowed, effectively dividing this variance quantity by the local batch size. However, we do not need to impose any restrictions on the (minimal) batch size and, for simplicity, always assume a batch size of one.

\section{\algname{EC-Ideal}  as a Starting Point}\label{sec:EF23_ideal}
\begin{figure}[!t]
\begin{minipage}{0.49\textwidth}
\begin{algorithm}[H]
    \caption{\algname{EC-Ideal}}
    \label{alg:ef-ideal}
    \begin{algorithmic}[1]
        \State \textbf{Input:} $\xx_0$, $\ee_t^i \!= \! \0_d, \hh_\star^i\eqdef \nabla f_i(\xx^\star)$, $\gamma$,  $\cC_\delta$
        \State $\hh_\star = \frac{1}{n}\sum_{i=1}^n\hh_\star^i$
        \For{$t = 0,1,2,\dots$}
        \State {\bf client side:}
        \State compute $\gg_t^i = \gg^i(\xx_t)$
        \State compute $\Delta_t^i = \cC_{\delta}(\ee_t^i + \gg_t^i - \hh_\star^i)$   
        \State update $\ee_{t+1}^i = \ee_t^i + \gg_t^i - \hh_\star^i - \Delta_t^i$
        \State send to server $\Delta_t^i$
        \State {\bf server side:}
        \State update $\xx_{t+1}:= \xx_t - \gamma  \hh_\star -  \frac{\gamma }{n} \sum_{i=1}^n \Delta_t^i$
        \EndFor
    \end{algorithmic}	
\end{algorithm}
\vspace{4mm}
\end{minipage}
\begin{minipage}{0.49\textwidth}
\begin{algorithm}[H]
    \caption{\algname{EControl}}
    \label{alg:ef-concurrent}
    \begin{algorithmic}[1]
        \State \textbf{Input:} $\xx_0, \ee_0^i\!= \! \0_d$, $\hh_0^i \!=\! \gg_0^i,$ $\gamma, \eta$, and $\cC_\delta$
        \State $\hh_0 = \frac{1}{n}\sum_{i=1}^n\hh_0^i$
        \For{$t = 0,1,2,\dots$}
        \State {\bf client side:}
        \State compute $\gg_t^i = \gg^i(\xx_t)$
        \State compute $\Delta_t^i = \cC_{\delta}(\eta \ee_t^i + \gg_t^i - \hh_t^i)$
        \State update $\ee_{t+1}^i = \ee_t^i + \gg_t^i - \hh_t^i - \Delta_t^i$
        \State and $\hh_{t+1}^i = \hh_{t}^i + \Delta_t^i$
        \State send to server $\Delta_t^i$ 
        \State {\bf server side:}
        \State update $\xx_{t+1}:= \xx_t - \gamma \hh_t -  \frac{\gamma}{n} \sum_{i=1}^n  \Delta_t^i$
        \State and $\hh_{t+1} = \hh_t + \frac{1}{n}\sum_{i=1}^n \Delta_t^i$\hfill 
        \EndFor
    \end{algorithmic}	
\end{algorithm}
\end{minipage}
\end{figure}
To motivate some of the key ingredients of our main method, we start with a simple idea developed in an ideal setting, which shed some light on the nuances of the interactions  between bias and error correction.
Assume for a moment that we have access to $\hh_\star^i \eqdef \nabla f_i(\xx^\star).$ We can utilize this additional information to modify the original \algname{EC} mechanism so that we only compress the difference between the current stochastic gradient $\gg_t^i$ and the $\hh^i_\star$. See \Cref{alg:ef-ideal} and the highlight below:

\begin{equation}
\label{eq:econtrol-ideal}
\begin{tabular}{cc}
    \algname{EC-Ideal} update: & 
    $\begin{alignedat}{2}
        &\Delta_t^i &&= \cC_{\delta}(\ee_t^i+\gg_t^i-\hh_\star^i)\\
        &\ee_{t+1}^i &&= \ee_t^i+\gg_t^i-\hh_\star^i-\Delta_t^i.
    \end{alignedat}$
\end{tabular}
\end{equation}
It turns out that this simple trick leads to a dramatic theoretical improvement. Now we do not have to restrict the heterogeneity of the problem in contrast to the analysis of original \algname{EC} \citep{stich2020communication}. The next theorem presents the convergence of \algname{EC-Ideal}.

\begin{restatable}[Convergence of \algname{EC-Ideal}]{theorem}{theoremefideal}
    \label{prop:ef-ideal}
    Let $f:\R^d\to\R$ be $\mu$-strongly quasi-convex around $\xx^\star$, and each $f_i$ be $L$-smooth and convex. Then there exists a stepsize $\gamma\leq \frac{\delta}{8\sqrt{6}L}$ such that after at most
    \[
        T=\wtilde{\cO}\left(
        \frac{\sigma^2}{\mu n\epsilon}
        + \frac{\sqrt{L}\sigma}{\mu\sqrt{\delta}\epsilon^{1/2}}
        + \frac{L}{\mu\delta} \right)  
    \]
    iterations of Algorithm~\ref{alg:ef-ideal} it holds $\Eb{f(\xx_{\text{out}})-f^\star}\leq \epsilon$, where $\xx_{\text{out}}$ is chosen randomly from $\left\{\xx_0,\dots,\xx_{T}\right\}$ with probabilities proportional to $(1-\nicefrac{\mu\gamma}{2})^{-(t+1)}$.
\end{restatable}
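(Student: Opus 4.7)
My plan is to prove the theorem by analyzing a virtual iterate together with an error potential, then combining them into a Lyapunov recursion of the standard contractive form that the Stich-style lemma can digest.

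First, I would introduce the \emph{virtual iterate} $\wtilde\xx_t \coloneqq \xx_t - \gamma \bar\ee_t$, where $\bar\ee_t \coloneqq \tfrac{1}{n}\sum_i \ee_t^i$. Observe that $\hh_\star = \tfrac{1}{n}\sum_i \nabla f_i(\xx^\star) = \nabla f(\xx^\star) = 0$, so the server update reduces to $\xx_{t+1} = \xx_t - \tfrac{\gamma}{n}\sum_i \Delta_t^i$. Averaging the error update across clients and substituting yields the telescoping identity
\begin{equation*}
\wtilde\xx_{t+1} = \wtilde\xx_t - \gamma \bar\gg_t, \qquad \bar\gg_t \coloneqq \tfrac{1}{n}\sum_i \gg_t^i,
\end{equation*}
which is a clean SGD-like recursion whose stochastic gradient is unbiased for $\nabla f(\xx_t)$ and whose conditional variance is at most $\sigma^2/n$ by independence across clients.

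Next, I would derive a one-step descent lemma for $\Eb{\|\wtilde\xx_{t+1}-\xx^\star\|^2}$. Expanding the square and taking expectations produces the usual terms, plus a cross term $2\gamma^2 \langle \bar\ee_t, \nabla f(\xx_t)\rangle$ that I would split via Young's inequality. Strong quasi-convexity of $f$ around $\xx^\star$ supplies $-\langle \xx_t-\xx^\star,\nabla f(\xx_t)\rangle \le -(f(\xx_t)-f^\star) - \tfrac{\mu}{2}\|\xx_t-\xx^\star\|^2$; I would then translate $\|\xx_t-\xx^\star\|^2$ back to $\|\wtilde\xx_t-\xx^\star\|^2$ using $\|\xx_t-\xx^\star\|^2 \ge \tfrac12\|\wtilde\xx_t-\xx^\star\|^2 - \gamma^2\|\bar\ee_t\|^2$, giving the contraction factor $(1-\mu\gamma/2)$. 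The smoothness bound $\|\nabla f(\xx_t)\|^2 \le 2L(f(\xx_t)-f^\star)$ converts the remaining gradient-norm terms into a function-value deficit, leaving a recursion of the schematic form
\begin{equation*}
\Eb{\|\wtilde\xx_{t+1}-\xx^\star\|^2} \le (1-\tfrac{\mu\gamma}{2})\Eb{\|\wtilde\xx_t-\xx^\star\|^2} - c_1\gamma\,\Eb{f(\xx_t)-f^\star} + c_2\gamma^2\,\Eb{\|\bar\ee_t\|^2} + \gamma^2 \sigma^2/n.
\end{equation*}

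The third ingredient is an error recursion. The contractive property gives $\Eb{\|\ee_{t+1}^i\|^2} \le (1-\delta)\Eb{\|\ee_t^i + \gg_t^i - \hh_\star^i\|^2}$. A Young split with parameter $\alpha = \delta/(2(1-\delta))$ turns this into $E_{t+1} \le (1-\delta/2) E_t + \tfrac{2}{\delta}\,\tfrac{1}{n}\sum_i \Eb{\|\gg_t^i-\hh_\star^i\|^2}$, where $E_t \coloneqq \tfrac1n\sum_i \Eb{\|\ee_t^i\|^2}$. The right-hand norm decomposes into a noise piece bounded by $\sigma^2$ and a bias piece $\|\nabla f_i(\xx_t)-\nabla f_i(\xx^\star)\|^2$; averaging across $i$ and applying the convex plus $L$-smooth inequality yields $\tfrac1n\sum_i \|\nabla f_i(\xx_t)-\nabla f_i(\xx^\star)\|^2 \le 2L(f(\xx_t)-f^\star)$. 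Combining,
\begin{equation*}
E_{t+1} \le (1-\tfrac{\delta}{2}) E_t + \tfrac{2\sigma^2}{\delta} + \tfrac{4L}{\delta}\Eb{f(\xx_t)-f^\star}.
\end{equation*}

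Finally I would merge the two recursions via the Lyapunov function $\Psi_t \coloneqq \Eb{\|\wtilde\xx_t-\xx^\star\|^2} + C\gamma^2 E_t$ with $C = \Theta(1/\delta)$ chosen so that the error potential contracts at rate $(1-\mu\gamma/2)$ while the injected $4L/\delta$ times $\Eb{f-f^\star}$ from the error update is absorbed by the available function-value deficit $c_1 \gamma \Eb{f-f^\star}$. This is exactly where the stepsize constraint $\gamma \le \delta/(8\sqrt{6}L)$ enters: it is the threshold making $c_1\gamma - C\gamma^2\cdot 4L/\delta \ge 0$ while simultaneously keeping $\gamma \le 1/(4L)$ in the descent lemma. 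The resulting recursion
\begin{equation*}
\Psi_{t+1} \le (1-\mu\gamma/2)\Psi_t - \tfrac{c_1 \gamma}{2}\Eb{f(\xx_t)-f^\star} + \gamma^2\!\left(\tfrac{\sigma^2}{n} + \tfrac{C\sigma^2}{\delta}\right)
\end{equation*}
is in a form handled by a standard weighted-averaging lemma (e.g.\ \citet{stich2020error}, with weights $(1-\mu\gamma/2)^{-(t+1)}$): tuning $\gamma$ to balance the three noise/bias terms against $\Psi_0$ yields the advertised complexity. The main obstacle I expect is the bookkeeping in the Lyapunov step, specifically choosing the Young parameter in the cross term $2\gamma^2\langle \bar\ee_t,\nabla f(\xx_t)\rangle$ so that the coefficient of $E_t$ is small enough to be absorbed yet the coefficient of $(f-f^\star)$ is still at least $\Omega(\gamma)$; this is what pins down the precise numerical constant in the stepsize restriction.
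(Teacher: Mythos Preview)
Your overall architecture (virtual iterate $\wtilde\xx_t$, error recursion for $E_t$, Lyapunov combination) is exactly the paper's route: the paper proves this theorem as the $\alpha=0$ case of \Cref{thm:ef-appx}, combining \Cref{lem:descent-X} ($X_{t+1}\le(1-\tfrac{\gamma\mu}{2})X_t-\tfrac{\gamma}{2}F_t+\tfrac{\gamma^2}{n}\sigma^2+3L\gamma^3E_t$) with an error recursion essentially identical to yours, and then invoking \Cref{lem:stich2020communication_lemma25}. So the plan is right.

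However, the specific scaling you propose does not deliver the stated rate. You take $\Psi_t=X_t+C\gamma^2E_t$ with $C=\Theta(1/\delta)$, and you claim the resulting noise is $\gamma^2(\sigma^2/n+C\sigma^2/\delta)$. With $C=\Theta(1/\delta)$ this is a $\gamma^2$-order term $\Theta(\gamma^2\sigma^2/\delta^2)$; feeding that into the summation lemma yields a complexity contribution $\Theta(\sigma^2/(\mu\delta^2\epsilon))$, not the claimed $\sqrt{L}\sigma/(\mu\sqrt{\delta}\sqrt{\epsilon})$. Moreover, your own absorption condition ``$c_1\gamma-C\gamma^2\cdot 4L/\delta\ge 0$'' then forces $\gamma\le\Theta(\delta^2/L)$, not $\delta/(8\sqrt{6}L)$; this also spoils the last term of the complexity. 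The root cause is the power of $\gamma$ in the $E_t$ coefficient of the descent inequality: when you Young-split the cross term $2\gamma^2\langle\bar\ee_t,\nabla f(\xx_t)\rangle$, the parameter must be chosen of order $L\gamma$ (not a constant) so that the $\|\nabla f(\xx_t)\|^2$ piece, which becomes $\Theta(\gamma)F_t$, can still be absorbed by $-2\gamma F_t$. That choice makes the $E_t$ coefficient $\Theta(L\gamma^3)$, not $\Theta(\gamma^2)$; hence the correct Lyapunov weight is $a=\Theta(L\gamma^3/\delta)$ (the paper uses $a=12L\gamma^3/\delta$), the error-induced noise becomes a $\gamma^3$ term $\Theta(L\gamma^3\sigma^2/\delta)$, and the stepsize constraint is exactly $\gamma\le\delta/(8\sqrt{6}L)$. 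With that one fix your outline goes through verbatim.
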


We defer the proof to \Cref{sec:ef23_ideal_proofs}.
Note that the output criteria (selecting a random iterate) is standard in the literature and for convex functions could also be replaced by a weighted average over all iterates that can be efficiently tracked over time~\cite[see e.g.][]{rakhlin2011making}.
\algname{EC-Ideal} achieves very desirable properties. First, it provably works for any contractive compression. Second, it achieves optimal asymptotic complexity and the standard linear convergence when $\sigma^2\to 0$. All of these results are derived without enforcing data heterogeneity bounds. However, there are several drawbacks as well. First, \algname{EC-Ideal} requires knowledge of $\{\hh^i_\star\}_{i\in[n]}$ which is unrealistic in most applications. In \Cref{sec:ef-appx} we show that the algorithm, in fact, needs only approximations of $\{\hh^i_\star\}_{i\in[n]}$ as input. Nonetheless, there is still an issue that \algname{EC-Ideal}'s convergence is only guaranteed under the convexity condition, which stems from the fact that we always need to control the distance between $\nabla f_i(\xx_t)$ and $\nabla f_i(\xx^\star)$. To overcome this issue, we need to build estimators of the current gradient, instead of the gradient at the optimum, parallel to maintaining the error term.

\section{\algname{EControl} is a Solution to All Issues}

Inspired by the properties of \algname{EC-Ideal}, we aim to build a mechanism that will progressively learn the local gradient approximations. \citet{stich2020communication, gorbunov2020linearly} created a learning process based on an additional compressor from a more restricted class of unbiased compression operators.
To make their method work for any contractive compressor,  we can replace the additional unbiased compressor with a more general contractive one, but this leads to worse complexity in the low noise regime and more restriction on the stepsize. We refer the reader to \Cref{sec:ef-double-contractive} for more detailed discussion. In this work, we propose a novel method, called \algname{EControl}, that overcomes all of aforementioned issues by 
\begin{itemize}
    \item building the error term and the gradient estimator with a single compressed message, enabling error compensation for the gradient estimator;
    \item introducing a new parameter $\eta$ to precisely \textit{control} the error compensation and balance error compensation carried from the gradient estimator.
\end{itemize}
We summarize \algname{EControl} in \Cref{alg:ef-concurrent} and highlight the key ingredients as follows:

\begin{equation}
\label{eq:econtrol}
\begin{tabular}{cc}
    \algname{EControl} update: & 
    $\begin{alignedat}{2}
        &\textcolor{teal}{\Delta_t^i} &&= \cC_{\delta}(\textcolor{purple}{\eta}\ee_t^i+\gg_t^i-\hh_t^i)\\
        &\ee_{t+1}^i               &&= \ee_t^i+\gg_t^i-\hh_t^i-\textcolor{teal}{\Delta_t^i}\\
        &\hh_{t+1}^i               &&= \hh_t^i +\textcolor{teal}{\Delta_t^i}
    \end{alignedat}$
\end{tabular}
\end{equation}

Fusing the error and the gradient estimator updates improves the algorithm's dependency on $\sigma^2$ and $\delta$, but also enables the gradient estimator to carry some level of the error information. This brings forth the challenge of balancing the strength of the feedback signal, and we introduce the $\eta$ parameter to obtain finer control over the error compensation mechanism. This new parameter brings additional flexibility and allows us to stabilize the convergence of \algname{EControl}. The effect of such a parameter in the context of the original \algname{EC} mechanism without gradient estimator might be of independent interest. We observe in practice that \Cref{alg:ef-concurrent} with $\eta=1$ is unstable and sensitive to the choice of initialization, which we illustrate in \Cref{sec:ef23-unstable} with a toy example. This highlights the importance of the finer control over \algname{EC} that $\eta$ enables.

\section{Theoretical Analysis of \algname{EControl}}

We move on to theoretical analysis of \algname{EControl}. Below we summarize the convergence properties of \Cref{alg:ef-concurrent} in all standard cases. Here we state the main results only. 
We start with the convergence results in the strongly quasi-convex regime.

\begin{restatable}[Convergence of \algname{EControl} for strongly quasi-convex objective]{theorem}{theoremscvx}
    \label{thm:ef-concurrent-scvx}
    Let $f$ be $\mu$-strongly quasi-convex around $\xx^\star$ and $L$-smooth. Let each $f_i$ be $L_i$-smooth. Then for $\eta=c \delta$ (where $c$ is an absolute constant we specify in the proof),
    there exists a $\gamma \leq \cO(\nicefrac{\delta}{\wtilde L})$\footnote{Note that the stepsize can depend on $\epsilon$. Here we use the notation $\gamma \leq \cO(\nicefrac{\delta}{\wtilde L})$ to denote that the stepsize must satisfy $\gamma \leq \frac{C\delta}{\wtilde{L}}$, for an absolute constant $C$ specified in the proof.}. 
    such that after at most 
    \[
        T = \wtilde\cO\left(\frac{\sigma^2}{\mu n\varepsilon}
        + \frac{\sqrt{L}\sigma}{\mu\delta^2 \varepsilon^{1/2}} 
        + \frac{\wtilde L}{\mu\delta} \right)
    \]
    iterations of \Cref{alg:ef-concurrent} it holds $\Eb{f(\xx_{\text{out}})-f^\star}\leq \epsilon$, where $\xx_{\text{out}}$ is chosen randomly from $\xx_t\in \left\{\xx_0,\dots,\xx_{T}\right\}$ with probabilities proportional to $(1-\frac{\gamma\mu}{2})^{-(t+1)}$.
\end{restatable}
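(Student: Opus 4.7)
The plan is to uncover a virtual stochastic gradient descent iterate hidden in the algorithm. Averaging the per-client error recursion gives $\frac{1}{n}\sum_i\ee_{t+1}^i = \frac{1}{n}\sum_i(\ee_t^i + \gg_t^i - \hh_t^i - \Delta_t^i)$; combined with the server updates $\hh_{t+1} = \hh_t + \frac{1}{n}\sum_i\Delta_t^i$ and $\xx_{t+1} = \xx_t - \gamma\hh_{t+1}$, this telescopes to
\[
\xx_{t+1} - \gamma\cdot\tfrac{1}{n}\textstyle\sum_i \ee_{t+1}^i \;=\; \xx_t - \gamma\cdot\tfrac{1}{n}\textstyle\sum_i \ee_t^i - \gamma\cdot\tfrac{1}{n}\textstyle\sum_i \gg_t^i .
\]
Defining the virtual iterate $\tilde\xx_t \coloneqq \xx_t - \gamma\cdot\tfrac{1}{n}\sum_i \ee_t^i$ (which equals $\xx_0$ at $t=0$ because $\ee_0^i=0$), the sequence $\tilde\xx_t$ evolves as plain SGD $\tilde\xx_{t+1} = \tilde\xx_t - \gamma\cdot\tfrac{1}{n}\sum_i\gg_t^i$ with an unbiased gradient at $\xx_t$ (not at $\tilde\xx_t$) and variance $\sigma^2/n$. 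The remaining task is to control $\|\tilde\xx_t - \xx^\star\|^2$ together with the consistency error $\|\tilde\xx_t - \xx_t\|^2 \le \gamma^2 E_t$, where $E_t \coloneqq \tfrac{1}{n}\sum_i\|\ee_t^i\|^2$.

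For the descent inequality, I would combine $\mu$-strong quasi-convexity around $\xx^\star$, $L$-smoothness of $f$ (yielding $\|\nabla f(\xx_t)\|^2 \le 2L(f(\xx_t)-f^\star)$), and Young's inequality on the shift $-2\gamma\langle\nabla f(\xx_t), \tilde\xx_t - \xx_t\rangle$. Combined with the elementary $\|\xx_t-\xx^\star\|^2 \ge \tfrac12\|\tilde\xx_t - \xx^\star\|^2 - \gamma^2 E_t$, this yields, for $\gamma \le O(1/L)$,
\[
\E_t\|\tilde\xx_{t+1}-\xx^\star\|^2 \le (1-\gamma\mu/2)\|\tilde\xx_t-\xx^\star\|^2 - \Theta(\gamma)(f(\xx_t)-f^\star) + O(L\gamma^3)\, E_t + \gamma^2\sigma^2/n .
\]

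The heart of the proof is to pair this with contracting recursions for $E_t$ and for the gradient-tracking error $G_t \coloneqq \tfrac{1}{n}\sum_i\|\nabla f_i(\xx_t)-\hh_t^i\|^2$. Writing $\ee_{t+1}^i = (1-\eta)\ee_t^i + (\vv_t^i - \cC_\delta(\vv_t^i))$ with $\vv_t^i = \eta\ee_t^i + \gg_t^i - \hh_t^i$, and using $\E\|\vv-\cC_\delta(\vv)\|^2 \le (1-\delta)\|\vv\|^2$ together with carefully tuned Young's parameters, the setting $\eta = c\delta$ with a small absolute constant $c$ balances the $(1-\eta)^2$ decay of the error update against the $(1-\delta)$ compressor contraction and produces a bound of the shape $\E_t E_{t+1} \le (1-\Theta(\delta))E_t + O(\delta^{-1})G_t + O(\sigma^2/\delta)$. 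For $G_t$, the identity $\uu_{t+1}^i = (\nabla f_i(\xx_{t+1})-\nabla f_i(\xx_t)) + \uu_t^i - \cC_\delta(\uu_t^i + \eta\ee_t^i + \xi_t^i)$, where $\uu_t^i \coloneqq \nabla f_i(\xx_t)-\hh_t^i$ and $\xi_t^i \coloneqq \gg_t^i-\nabla f_i(\xx_t)$, combined with contractivity and $L_i$-smoothness, gives $\E_t G_{t+1} \le (1-\Theta(\delta))G_t + O(\delta)E_t + O(\sigma^2/\delta) + O(\wtilde L^2/\delta)\|\xx_{t+1}-\xx_t\|^2$.

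Finally I would form the Lyapunov function $\Lambda_t \coloneqq \|\tilde\xx_t-\xx^\star\|^2 + A\gamma^2 E_t + B\gamma^2 G_t$ and choose $A, B$ so that the $\Theta(\delta)$ contractions of $E_t$ and $G_t$ dominate both the $O(L\gamma^3)E_t$ perturbation from the descent step and the $\wtilde L^2\gamma^2\|\hh_{t+1}\|^2$ feedback into $G_t$; the latter is what drives the stepsize restriction $\gamma \le O(\delta/\wtilde L)$ stated in the theorem. The resulting one-step recursion $\E_t\Lambda_{t+1}\le (1-\gamma\mu/2)\Lambda_t - \Theta(\gamma)(f(\xx_t)-f^\star) + O(\gamma^2\sigma^2/n) + O(\gamma^2\sigma^2/\delta^3)$ is then unrolled with weights proportional to $(1-\gamma\mu/2)^{-(t+1)}$, and standard stepsize tuning (choosing $\gamma$ to balance the noise against the linear decay in three regimes) reproduces the three terms of the stated complexity. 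The main obstacle is the preceding step: because the same compressed message $\Delta_t^i = \cC_\delta(\vv_t^i)$ drives both the $E_t$ and $G_t$ updates, verifying that a single constant $c$ in $\eta = c\delta$ makes both recursions contract at rate $\Theta(\delta)$ simultaneously, with a product of cross-coupling coefficients small enough to be absorbed by the Lyapunov weights $A, B$, requires tight algebra and a delicate choice of the Young's parameters.
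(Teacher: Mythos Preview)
The overall architecture of your plan---virtual iterate $\tilde\xx_t$, perturbed-SGD descent on $\|\tilde\xx_t-\xx^\star\|^2$, coupled recursions for two auxiliary quantities, Lyapunov combination, and stepsize tuning---matches the paper's. The decisive difference is your choice of the second auxiliary quantity: you track $G_t=\tfrac{1}{n}\sum_i\|\nabla f_i(\xx_t)-\hh_t^i\|^2$, whereas the paper tracks $H_t=\tfrac{1}{n}\sum_i\E\|\eta\ee_t^i+\gg_t^i-\hh_t^i\|^2$, the squared norm of the compressor input. This is not cosmetic; it is exactly where your argument breaks.

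Your claimed recursion $E_{t+1}\le(1-\Theta(\delta))E_t+O(\delta^{-1})G_t+O(\sigma^2/\delta)$ is not obtainable. From $\ee_{t+1}^i=(1-\eta)\ee_t^i+(\vv_t^i-\cC_\delta(\vv_t^i))$ with $\vv_t^i=\eta\ee_t^i+\gg_t^i-\hh_t^i$, Young's inequality gives
\[
E_{t+1}\le(1+\alpha)(1-\eta)^2E_t+(1+\alpha^{-1})(1-\delta)H_t.
\]
To replace $H_t$ by $G_t$ you must split $\E\|\vv_t^i\|^2$, which reintroduces a term of order $(1+\alpha^{-1})\eta^2 E_t$. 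With $\alpha=\Theta(\eta)$ (forced by the $(1-\eta)^2$ factor), this feedback is $\Theta(\eta)$ and wipes out the contraction; no choice of $\eta=c\delta$ or of Young parameters saves it. Even granting both of your stated recursions, the coupling product $O(\delta^{-1})\cdot O(\delta)=O(1)$ is not dominated by the product of contraction rates $\Theta(\delta)\cdot\Theta(\delta)=\Theta(\delta^2)$, so no Lyapunov weights $A,B$ can close the system---the obstacle you flag in your last sentence is real and fatal for this decomposition.

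The paper avoids this by keeping $H_t$ itself as the Lyapunov component. The algebraic identity
\[
\eta\ee_{t+1}^i+\gg_{t+1}^i-\hh_{t+1}^i=(1+\eta)\bigl(\vv_t^i-\cC_\delta(\vv_t^i)\bigr)-\eta^{2}\ee_t^i+(\gg_{t+1}^i-\gg_t^i)
\]
shows that after the $(1-\delta)$ contraction only an $\eta^{2}\ee_t^i$ remainder survives; squared, this makes the $E_t$-coupling in the $H$-recursion $O(\eta^4/\delta)=O(\delta^3)$ instead of $O(\delta)$. Now $b_{EH}\cdot b_{HE}=O(\delta^{-1})\cdot O(\delta^3)=O(\delta^2)$ matches the product of contraction rates, and the Lyapunov $\Psi_t=X_t+aH_t+bE_t$ (with $b\propto L\gamma^3/\delta$, $a\propto b/\delta^2$) closes. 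The resulting noise is $O(L\gamma^3\sigma^2/\delta^4)$, a $\gamma^3$ term that under the summation lemma yields $\sqrt{L}\sigma/(\mu\delta^2\sqrt{\varepsilon})$; your proposed $O(\gamma^2\sigma^2/\delta^3)$ term is a $\gamma^2$ contribution and would instead give $\sigma^2/(\mu\delta^3\varepsilon)$, not the stated complexity.
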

The asymptotic complexity of \algname{EControl} in this regime with stochastic gradient is tight and cannot be improved, as opposed to previous works \citep{fatkhullin2021ef21, zhao2022beer}. As we can see, the first term linearly improves with $n$, similar to the convergence behavior of distributed \algname{SGD}. Moreover, \algname{EControl} achieves standard linear convergence and a desired inverse linear dependency on the compression parameter $\delta$ in the noiseless setting.  Next, we switch to a general convex setting. 

\begin{restatable}[Convergence of \algname{EControl} for quasi-convex objective]{theorem}{theoremcvx}

    \label{thm:ef-concurrent-cvx}
    Let $f$ be quasi-convex around $\xx^\star$ and $L$-smooth. Let each $f_i$ be $L_i$-smooth. Then for $\eta = c\delta$ there exists a $\gamma \leq  \cO(\nicefrac{\delta}{\wtilde L})$ such that after at most 
    \[
        T= \cO\left(\frac{R_0\sigma^2}{n\epsilon^2} 
        + \frac{\sqrt{L}R_0\sigma}{\delta^2\epsilon^{3/2}}
        + \frac{\wtilde{L}R_0}{\delta\epsilon}\right)
    \]
    iterations of \Cref{alg:ef-concurrent} it holds $\Eb{f(\xx_{\text{out}})-f^\star}\leq \epsilon$, where $R_0 \coloneqq \|\xx_0-\xx^\star\|^2$ and  $\xx_{\text{out}}$ is chosen uniformly at random from $\xx_t\in \left\{\xx_0,\dots,\xx_{T}\right\}$.
\end{restatable}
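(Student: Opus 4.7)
The plan is to follow the stochastic convex optimization template with a composite Lyapunov function that tracks not only iterate error but also the two auxiliary error sequences of \algname{EControl}. Concretely, I would work with
\[
    \Psi_t \;\eqdef\; \|\xx_t - \xx^\star\|^2 \;+\; a\gamma^2 \bar E_t \;+\; b\gamma^2 \bar H_t,
\]
where $\bar E_t \eqdef \tfrac{1}{n}\sum_i \|\ee_t^i\|^2$ is the average squared compression error and $\bar H_t \eqdef \tfrac{1}{n}\sum_i \|\hh_t^i - \nabla f_i(\xx_t)\|^2$ is the average gradient-tracking error, with positive weights $a,b$ chosen as suitable polynomial functions of $1/\delta$. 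This is the same object one expects to drive the proof of Theorem~\ref{thm:ef-concurrent-scvx}; the two proofs share most of their machinery and differ only in how the iterate-distance term is handled at the end.

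The first bulk of work is to establish one-step contractions for the auxiliary sequences. Writing
\[
    \ee_{t+1}^i \;=\; (1-\eta)\ee_t^i \;+\; \bigl(\eta\ee_t^i + \gg_t^i - \hh_t^i - \cC_\delta(\eta\ee_t^i + \gg_t^i - \hh_t^i)\bigr),
\]
applying Young's inequality, and using $\mathbb{E}\|\cC_\delta(\vv)-\vv\|^2 \le (1-\delta)\|\vv\|^2$, the choice $\eta = c\delta$ for a small absolute $c>0$ is precisely calibrated to yield $\mathbb{E}_t[\bar E_{t+1}] \le (1-\Theta(\delta))\bar E_t + \cO(\bar H_t/\delta + \sigma^2/\delta)$. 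A parallel expansion of $\hh_{t+1}^i - \nabla f_i(\xx_{t+1}) = (\hh_t^i - \nabla f_i(\xx_t)) + (\nabla f_i(\xx_t) - \nabla f_i(\xx_{t+1})) + \Delta_t^i$, combined with $L_i$-smoothness so that $\|\nabla f_i(\xx_{t+1})-\nabla f_i(\xx_t)\|^2 \le L_i^2\gamma^2\|\hh_{t+1}\|^2$ (averaging producing $\wtilde L^2$), gives $\mathbb{E}_t[\bar H_{t+1}] \le (1-\Theta(\delta))\bar H_t + \cO\bigl(\delta\bar E_t + \sigma^2 + \wtilde L^2 \gamma^2 \mathbb{E}_t\|\hh_{t+1}\|^2\bigr)$. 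Choosing $a,b$ as moderate powers of $1/\delta$ makes the two cross-feedings absorb each other, at the cost of requiring $\gamma \le \cO(\delta/\wtilde L)$ to keep the smoothness-driven feedback subdominant.

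The second ingredient is the standard iterate expansion $\|\xx_{t+1}-\xx^\star\|^2 = \|\xx_t - \gamma\hh_{t+1} - \xx^\star\|^2$. Splitting $\hh_{t+1} = \nabla f(\xx_t) + (\hh_{t+1}-\nabla f(\xx_t))$, quasi-convexity around $\xx^\star$ converts $\lin{\nabla f(\xx_t), \xx_t-\xx^\star}$ into the suboptimality $f(\xx_t)-f^\star$, the remainder cross-term is absorbed into a fraction of $\bar H_t$ and stochastic variance, and $\gamma^2\|\hh_{t+1}\|^2$ is reduced back to the suboptimality via $\|\nabla f(\xx_t)\|^2 \le 2L(f(\xx_t)-f^\star)$. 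Combining with the auxiliary contractions and a judicious choice of constants, $\Psi_t$ obeys
\[
    \mathbb{E}_t[\Psi_{t+1}] \;\le\; \Psi_t \;-\; \gamma\bigl(\mathbb{E}_t[f(\xx_t)] - f^\star\bigr) \;+\; \frac{C\gamma^2\sigma^2}{n} \;+\; \frac{C' L\gamma^3\sigma^2}{\delta^{4}}.
\]
Telescoping over $t=0,\dots,T-1$, observing that $\Psi_0 \lesssim R_0$ since $\ee_0^i=\mathbf{0}$ and $\hh_0^i-\nabla f_i(\xx_0) = \gg_0^i-\nabla f_i(\xx_0)$ contributes only a lower-order variance term, and dividing by $T$ yields $\tfrac{1}{T}\sum_t \mathbb{E}[f(\xx_t)-f^\star] \le R_0/(\gamma T) + C\gamma\sigma^2/n + C' L\gamma^2 \sigma^2/\delta^4$. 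By the tower property this equals $\mathbb{E}[f(\xx_{\text{out}})-f^\star]$ for uniformly random $\xx_{\text{out}}$, and the advertised $T$ follows from standard three-regime tuning of $\gamma$ subject to $\gamma \le \cO(\delta/\wtilde L)$.

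The main obstacle is coordinating the Young's inequality constants, the Lyapunov weights $a,b$, and the choice $\eta=c\delta$ so that the $\bar E_t$ and $\bar H_t$ recursions contract at rate $\Theta(\delta)$ while their cross-couplings and the smoothness-driven feedback through $\|\hh_{t+1}\|^2$ remain controllable; in particular, pinning down the exact polynomial dependence on $1/\delta$ of the variance-driven term is what produces the middle term $\sqrt{L}\sigma/(\delta^2\epsilon^{3/2})$ in the final complexity. Once the recursion underlying Theorem~\ref{thm:ef-concurrent-scvx} is in place, the quasi-convex extension is largely routine, amounting to replacing the $(1-\mu\gamma/2)$ contraction by unit telescoping and invoking the standard convex stepsize balance.
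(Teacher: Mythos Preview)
Your high-level plan and the target one-step recursion
\[
\Psi_{t+1}\;\le\;\Psi_t\;-\;\Theta(\gamma)\,F_t\;+\;\cO\!\left(\tfrac{\gamma^2\sigma^2}{n}\right)\;+\;\cO\!\left(\tfrac{L\gamma^3\sigma^2}{\delta^4}\right)
\]
match the paper, and the closing telescoping/stepsize-tuning step is exactly what the paper does via Lemma~\ref{lem:stich2020communication_lemma27}. However, there is a genuine gap in how you propose to obtain this recursion.

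\textbf{The cross-term cannot be ``absorbed'' without virtual iterates.} In the expansion $\|\xx_{t+1}-\xx^\star\|^2=\|\xx_t-\gamma\hh_{t+1}-\xx^\star\|^2$, the piece $-2\gamma\lin{\hh_{t+1}-\nabla f(\xx_t),\,\xx_t-\xx^\star}$ is \emph{biased}: using $\hh_{t+1}-\gg_t=\ee_t-\ee_{t+1}$, its conditional mean equals $-2\gamma\lin{\ee_t-\ee_{t+1},\,\xx_t-\xx^\star}$, which is not zero and cannot be bounded by $\bar H_t$ alone. Any Young-type inequality here reintroduces a positive multiple of $\|\xx_t-\xx^\star\|^2$, which in the merely (quasi-)convex case you have no contraction to cancel. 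The paper avoids this by working with the \emph{virtual iterate} $\wtilde\xx_t\coloneqq\xx_t-\gamma\ee_t$, which satisfies the unbiased recursion $\wtilde\xx_{t+1}=\wtilde\xx_t-\gamma\gg_t$ (cf.\ \eqref{notation:virtual_iter}--\eqref{eq:link_xt_xt_tilde}); then Lemma~\ref{lem:descent-X} (applied with $\mu=0$) delivers the descent in $X_t\coloneqq\mathbb{E}\|\wtilde\xx_t-\xx^\star\|^2$ with only a $3L\gamma^3E_t$ penalty and no rogue $\|\xx_t-\xx^\star\|^2$ term. Your proposal is missing this device, and the sentence ``the remainder cross-term is absorbed into a fraction of $\bar H_t$ and stochastic variance'' does not go through as stated.

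\textbf{A secondary difference in the auxiliary quantities.} The paper's second auxiliary term is not the \algname{EF21}-style tracking error $\|\hh_t^i-\nabla f_i(\xx_t)\|^2$ you use, but the compressor-input size $H_t\coloneqq\tfrac{1}{n}\sum_i\mathbb{E}\|\eta\ee_t^i+\gg_t^i-\hh_t^i\|^2$ (Lemmas~\ref{lem:descent-E-ef-concurrent} and~\ref{lem:descent-H-ef-concurrent}). The pair $(E_t,H_t)$ contracts mutually at rate $\Theta(\delta)$ after choosing $\eta=\Theta(\delta)$, and with the paper's $\gamma^3$-scaled weights $b=\Theta(L\gamma^3/\delta)$, $a=\Theta(b/\delta^2)$ one gets $\Psi_0=X_0=R_0$ exactly (since $\ee_0^i=\0$ and $\hh_0^i=\gg_0^i$ force $H_0=0$), rather than the ``lower-order variance term'' you would have to carry with your $\bar H_0\le\sigma^2$. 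Once you switch to the virtual-iterate distance and this choice of $H_t$, the rest of your outline (combine the three descent lemmas, pick constants, telescope, tune $\gamma\le\cO(\delta/\wtilde L)$) is precisely Lemma~\ref{lem:descent-Psi-concurrent-cvx} followed by Theorem~\ref{thm:ef-concurrent-cvx-precise}.
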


Similar to the previous case, the first term enjoys a linear speedup by the number of nodes. We achieve a standard sublinear convergence and a desired inverse linear dependency on $\delta$ in the noiseless regime.

\begin{restatable}[Convergence of \algname{EControl} for nonconvex objective]{theorem}{theoremncvx}
    \label{thm:ef-concurrent-ncvx}
    Let $f$ be $L$-smooth, and each $f_i$ be $L_i$-smooth.  Then for $\eta = c\delta$ there exists a $\gamma \leq  \cO(\nicefrac{\delta}{\wtilde L})$ such that after at most 
    \[
        T= \cO\left( \frac{LF_0\sigma^2}{n\epsilon^2} 
        + \frac{LF_0\sigma}{\delta^2\epsilon^{3/2}}
        + \frac{\wtilde{L}F_0}{\delta\epsilon} \right)
    \]
    iterations of \Cref{alg:ef-concurrent} it holds $\Eb{\norm{\nabla f(\xx_{\text{out}})}^2}\leq \epsilon$,  where $F_0 \coloneqq f(\xx_0) - f^\star$ and $\xx_{\text{out}}$ is chosen uniformly at random from $\xx_t\in\left\{\xx_0,\dots,\xx_{T}\right\}$.
\end{restatable}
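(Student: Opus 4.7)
\textbf{Proof plan for Theorem~\ref{thm:ef-concurrent-ncvx} (nonconvex case).}
The plan is to build a Lyapunov function of the form $\Phi_t = f(\xx_t) - f^\star + A \gamma H_t + B \gamma E_t$, with
$$H_t \eqdef \frac{1}{n}\sum_{i=1}^n \|\hh_t^i-\nabla f_i(\xx_t)\|^2, \qquad E_t \eqdef \frac{1}{n}\sum_{i=1}^n \|\ee_t^i\|^2,$$
and to prove a one-step inequality of the form $\E[\Phi_{t+1}] \le \Phi_t - \tfrac{\gamma}{2}\|\nabla f(\xx_t)\|^2 + C_1\gamma\sigma^2/n + C_2\gamma^2\sigma^2/\delta^3$, so that telescoping yields the claimed rate. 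The first ingredient is the descent lemma: since $\xx_{t+1}=\xx_t-\gamma\bar{\hh}_{t+1}$ with $\bar{\hh}_{t+1}=\tfrac{1}{n}\sum_i \hh_{t+1}^i$, $L$-smoothness of $f$ together with the identity $\langle a,b\rangle = \tfrac12(\|a\|^2+\|b\|^2-\|a-b\|^2)$ gives, provided $\gamma L \leq 1$,
$$\E[f(\xx_{t+1})] \le f(\xx_t) - \tfrac{\gamma}{2}\|\nabla f(\xx_t)\|^2 + \tfrac{\gamma}{2}\E\|\bar{\hh}_{t+1} - \nabla f(\xx_t)\|^2.$$
The key decomposition, obtained by substituting $\Delta_t^i = \eta \ee_t^i + \gg_t^i - \hh_t^i + \omega_t^i$ with $\E\|\omega_t^i\|^2 \leq (1-\delta)\|\eta \ee_t^i + \gg_t^i - \hh_t^i\|^2$, is
$$\hh_{t+1}^i - \nabla f_i(\xx_t) = \eta \ee_t^i + (\gg_t^i - \nabla f_i(\xx_t)) + \omega_t^i,$$
which lets me bound $\E\|\bar{\hh}_{t+1}-\nabla f(\xx_t)\|^2 \le 3\eta^2\|\bar\ee_t\|^2 + 3\sigma^2/n + 3\cdot \tfrac{1}{n}\sum_i \E\|\omega_t^i\|^2$, exploiting independence of stochastic gradient noise across clients for the $1/n$ improvement.

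Next I derive two coupled recursions. For the error, the algebraic identity $\ee_{t+1}^i = (1-\eta)\ee_t^i - \omega_t^i$ together with Young's inequality (parameter $\alpha = \eta/2$) and the contractive property gives, after bounding $\|\eta \ee_t^i + \gg_t^i - \hh_t^i\|^2 \le 2\eta^2\|\ee_t^i\|^2 + 2\|\hh_t^i - \nabla f_i(\xx_t)\|^2 + 2\|\gg_t^i - \nabla f_i(\xx_t)\|^2$ (with the noise term contributing only in expectation), something of the form
$$\E[E_{t+1}] \le \left(1 - \tfrac{\eta}{2}\right)E_t + \tfrac{C}{\delta}\bigl(H_t + \sigma^2\bigr),$$
where the choice $\eta = c\delta$ ensures $(1+\alpha)(1-\eta)^2(1+\eta^2/\eta)\le 1-\eta/2$. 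For the gradient estimator, I split $\hh_{t+1}^i-\nabla f_i(\xx_{t+1}) = [\hh_{t+1}^i - \nabla f_i(\xx_t)] + [\nabla f_i(\xx_t)-\nabla f_i(\xx_{t+1})]$, apply Young's inequality with parameter $\beta\asymp\eta$, and use $L_i$-smoothness of $f_i$ to bound the second bracket by $L_i^2\gamma^2\|\bar{\hh}_{t+1}\|^2$, averaging over $i$ to get $\tilde L^2$. Together with the decomposition above this yields
$$\E[H_{t+1}] \le (1-\tfrac{\eta}{4}) H_t + C'\eta E_t + C'' \sigma^2 + \tfrac{C''' \tilde L^2 \gamma^2}{\eta} \E\|\bar{\hh}_{t+1}\|^2.$$

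The main obstacle is the choice of constants $A,B$ in the Lyapunov function so that (i) the $H_t$-term produced by the error recursion is absorbed into the $A\gamma H_t$ slack from the $H_t$-recursion, (ii) the $E_t$-term produced by the $H_t$-recursion is absorbed into the $B\gamma E_t$ slack from the $E_t$-recursion, and (iii) the $\tilde L^2\gamma^2 \E\|\bar{\hh}_{t+1}\|^2$ term from the $H_t$-recursion, together with the leftover $-(\tfrac{\gamma}{2}-\tfrac{L\gamma^2}{2})\E\|\bar{\hh}_{t+1}\|^2$ that the descent lemma silently produced and that I will \emph{keep} (rather than drop at the start), is non-positive. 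This last condition forces $\gamma \le c'\delta/\tilde L$, which is where the middle term's $1/\delta^2$ comes from (one $\delta$ from $\gamma$ and one more from $1/\eta$ in the $H_t$ recursion). The natural balance is $B = \Theta(1/\delta^2)$ and $A = \Theta(1/\delta)$, yielding the aggregated inequality
$$\E[\Phi_{t+1}] \le \Phi_t - \tfrac{\gamma}{2}\|\nabla f(\xx_t)\|^2 + O\!\bigl(\tfrac{\gamma\sigma^2}{n}\bigr) + O\!\bigl(\tfrac{\gamma^2 \sigma^2}{\delta^3}\bigr).$$

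Finally, summing from $t=0$ to $T-1$, dividing by $\tfrac{\gamma T}{2}$, and using $\Phi_0 \le F_0 + \text{(initial residuals)}$ gives
$$\tfrac{1}{T}\sum_{t=0}^{T-1} \E\|\nabla f(\xx_t)\|^2 \le \tfrac{2\Phi_0}{\gamma T} + O\!\bigl(\tfrac{\sigma^2}{n}\bigr) + O\!\bigl(\tfrac{\gamma \sigma^2}{\delta^3}\bigr),$$
since $\hh_0^i = \gg_0^i$ makes $H_0 \le \sigma^2$ and $E_0=0$, so $\Phi_0 = O(F_0)$ when $\gamma\sigma^2/\delta \le F_0$ (otherwise the $\sigma^2$ initialization cost is dominated by the stochastic floor). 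Choosing $\xx_{\text{out}}$ uniformly at random turns the left-hand side into $\E\|\nabla f(\xx_{\text{out}})\|^2$, and tuning $\gamma = \min\{\delta/\tilde L, \sqrt{F_0 n/(L\sigma^2 T)}, (F_0\delta^3/(\sigma^2 T))^{1/2}\}$ balances the three terms to give the stated iteration complexity $T = O\!\bigl(\tfrac{LF_0\sigma^2}{n\epsilon^2} + \tfrac{LF_0\sigma}{\delta^2\epsilon^{3/2}} + \tfrac{\tilde L F_0}{\delta\epsilon}\bigr)$.
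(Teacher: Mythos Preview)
Your proof plan has a genuine gap that prevents it from achieving the claimed rate, and the gap stems from omitting the \emph{virtual iterate} technique that the paper relies on.

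First, a self-consistency problem: your claimed one-step bound $\E[\Phi_{t+1}]\le\Phi_t-\tfrac{\gamma}{2}\|\nabla f(\xx_t)\|^2+O(\gamma\sigma^2/n)+O(\gamma^2\sigma^2/\delta^3)$, even if it held, does not give the stated complexity. After telescoping and dividing by $\gamma T/2$, the term $O(\gamma\sigma^2/n)$ becomes $O(\sigma^2/n)$, a floor independent of both $\gamma$ and $T$. No stepsize tuning can drive this below $\epsilon$ unless $n\ge\sigma^2/\epsilon$. What you actually need is $O(\gamma^2 L\sigma^2/n)$ in the one-step bound, so that after dividing you get $O(\gamma L\sigma^2/n)$, which \emph{is} tunable.

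Second, and more fundamentally, your direct descent on $f(\xx_t)$ cannot produce such a bound. In the decomposition $\bar\hh_{t+1}-\nabla f(\xx_t)=\eta\bar\ee_t+\bar\xi_t+\bar\omega_t$, the compression error $\omega_t^i=\cC_\delta(\eta\ee_t^i+\gg_t^i-\hh_t^i)-(\eta\ee_t^i+\gg_t^i-\hh_t^i)$ depends on the fresh noise $\xi_t^i$ through $\gg_t^i$, and a general contractive compressor is biased, so $\E[\omega_t^i]\ne 0$. Hence $\E\|\bar\omega_t\|^2\le\frac{1}{n}\sum_i\E\|\omega_t^i\|^2\le(1-\delta)\frac{1}{n}\sum_i\E\|\eta\ee_t^i+\gg_t^i-\hh_t^i\|^2$, and that last average contains an unavoidable $+\sigma^2$ (not $\sigma^2/n$). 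Your descent inequality therefore carries an $O(\gamma\sigma^2)$ term that cannot be absorbed by the Lyapunov and destroys the linear speedup. The same issue recurs in your $E_t$ and $H_t$ recursions: each produces a $\sigma^2$ term multiplied by your Lyapunov weights $A\gamma,B\gamma=\Theta(\gamma/\delta),\Theta(\gamma/\delta^2)$, again yielding $O(\gamma\sigma^2/\delta^k)$, not $O(\gamma^2)$ or $O(\gamma^3)$.

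The paper avoids all of this by working with the virtual sequence $\wtilde\xx_{t+1}=\wtilde\xx_t-\gamma\bar\gg_t$, for which $\xx_t-\wtilde\xx_t=\gamma\bar\ee_t$. The descent lemma on $\wtilde F_t=\E[f(\wtilde\xx_t)]-f^\star$ (Lemma~\ref{lem:descent-Ftilde}) yields the clean SGD noise $\frac{\gamma^2 L\sigma^2}{2n}$ at order $\gamma^2$, while the mismatch $\xx_t-\wtilde\xx_t$ contributes only $\frac{\gamma^3 L^2}{2}E_t$ at order $\gamma^3$. The paper also tracks a different quantity $H_t=\frac{1}{n}\sum_i\E\|\eta\ee_t^i+\gg_t^i-\hh_t^i\|^2$ (the compressor input, not your EF21-style $\|\hh_t^i-\nabla f_i(\xx_t)\|^2$), for which a genuine contraction holds. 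The Lyapunov weights are then chosen as $a,b=\Theta(\gamma^3/\delta^k)$, so the $\sigma^2$ terms coming out of the $H_t$ recursion land at order $\gamma^3$, giving the one-step bound $\Psi_{t+1}\le\Psi_t-\tfrac{\gamma}{8}\E\|\nabla f(\xx_t)\|^2+\gamma^2\tfrac{L\sigma^2}{2n}+\gamma^3\,\Theta(L^2\sigma^2/\delta^4)$ (Lemma~\ref{lem:descent-Psi-concurrent-ncvx}), from which the theorem follows via the standard summation lemma.
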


The complexity of \algname{EControl} in both asymptotic and noiseless regimes matches known lower bounds \citep{he2023lower} for nonconvex functions that are derived under much stronger assumptions. We achieve these results without any theoretical restrictions, such as large batch sizes or repeated communication rounds, making it easy to deploy in practice as an extension to existing \algname{EC} implementations.

\section{Experiments}

In this section, we complement our theoretical analysis of \algname{EControl} with experimental results. We corroborate our theoretical findings with experimental evaluations and illustrate the practical efficiency and effectiveness of \algname{EControl} in real-world scenarios.

\subsection{Synthetic Least Squares Problem}
First, we consider the least squares problem designed by~\citet{koloskova2020unified}. For each client $i$, $f_i(\xx)\eqdef \frac{1}{2}\norm{\mA_i \xx -\bb_i}^2$, where $\mA_i\eqdef \frac{i^2}{n}\cdot \mI_d$ and each $\bb_i$ is sampled from $\mathcal{N}(\bb, \frac{\zeta^2}{i^2}\mI_d)$ for some parameter $\zeta$ and some mean $\bb$ identical for each worker. The parameter $\zeta$ controls the heterogeneity of the problem. We add Gaussian noise to the gradients to control the stochastic level $\sigma^2$ of the gradient. In \Cref{fig:artificial-csgd} one can see that simply aggregating the compressed local gradient without error compensation (termed \algname{Compressed-SGD} \citep{khirirat2018distributed,alistarh2018convergence}) does not lead to convergence, while \algname{EControl} and \algname{EC} do. This result shows the need to use \algname{EC} to make the method convergent. Further, we use this simple synthetic problem to demonstrate some of the key features of \algname{EControl}: 

\begin{figure}[t]
    \begin{minipage}{0.45\textwidth}
        \centering
        \includegraphics[width=0.8\linewidth]{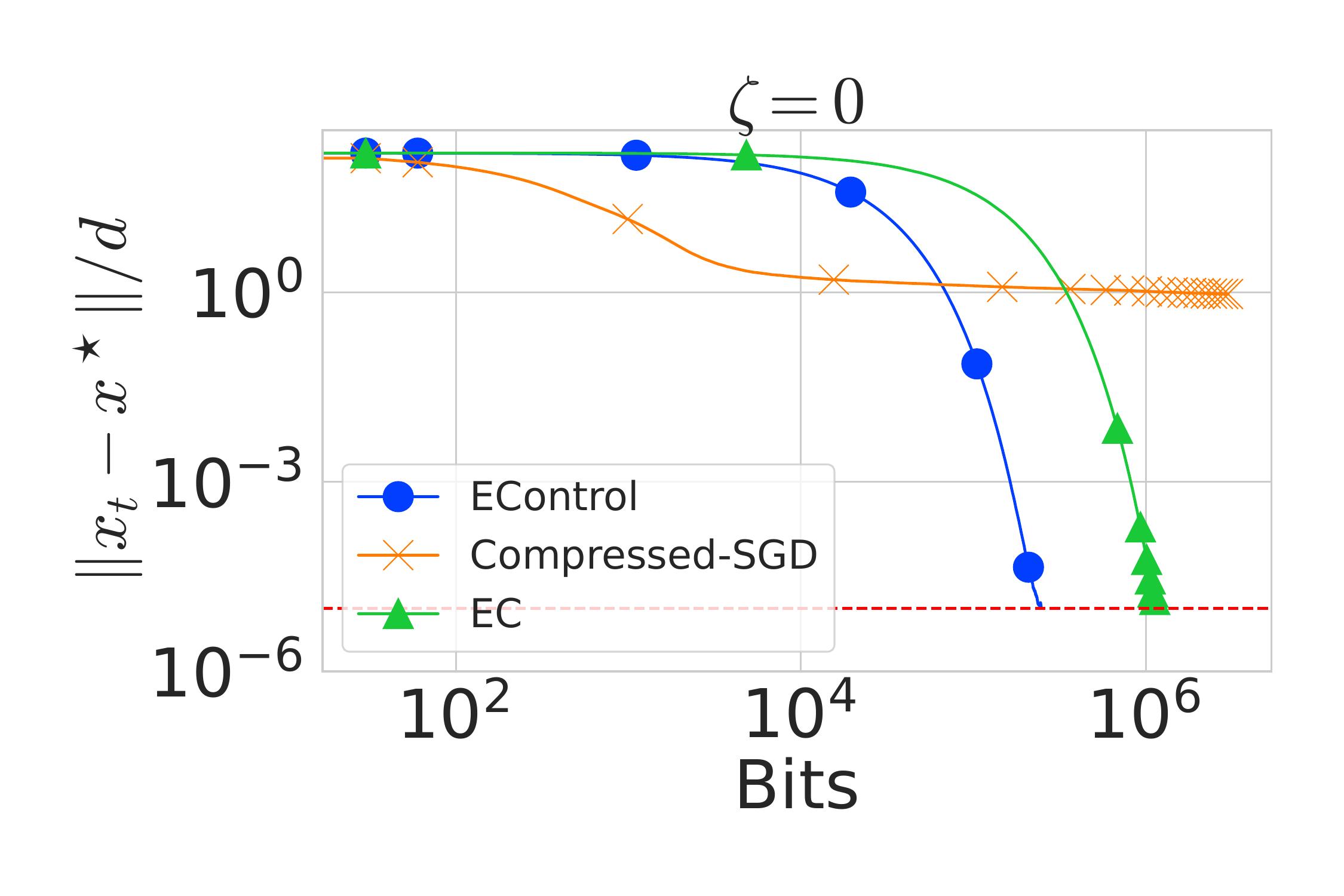}
        \caption{The comparison of \algname{Compressed-SGD}, \algname{EC}, and \algname{EControl}. $n=5,d=300,\zeta=0$ and $\sigma=10$. We apply Top-K compressor with $\nicefrac{K}{d}=0.1$. Stepsizes were tuned for each setting. X-axis represents the number of bits sent. \algname{Compressed-SGD} (without error compensation) does not converge and \algname{EControl} slightly outperforms the classic \algname{EC}.}
        \label{fig:artificial-csgd}
    \end{minipage}
    \hspace{0.1\textwidth}
    \begin{minipage}{0.45\textwidth}
        \centering
        \includegraphics[width=0.8\linewidth]{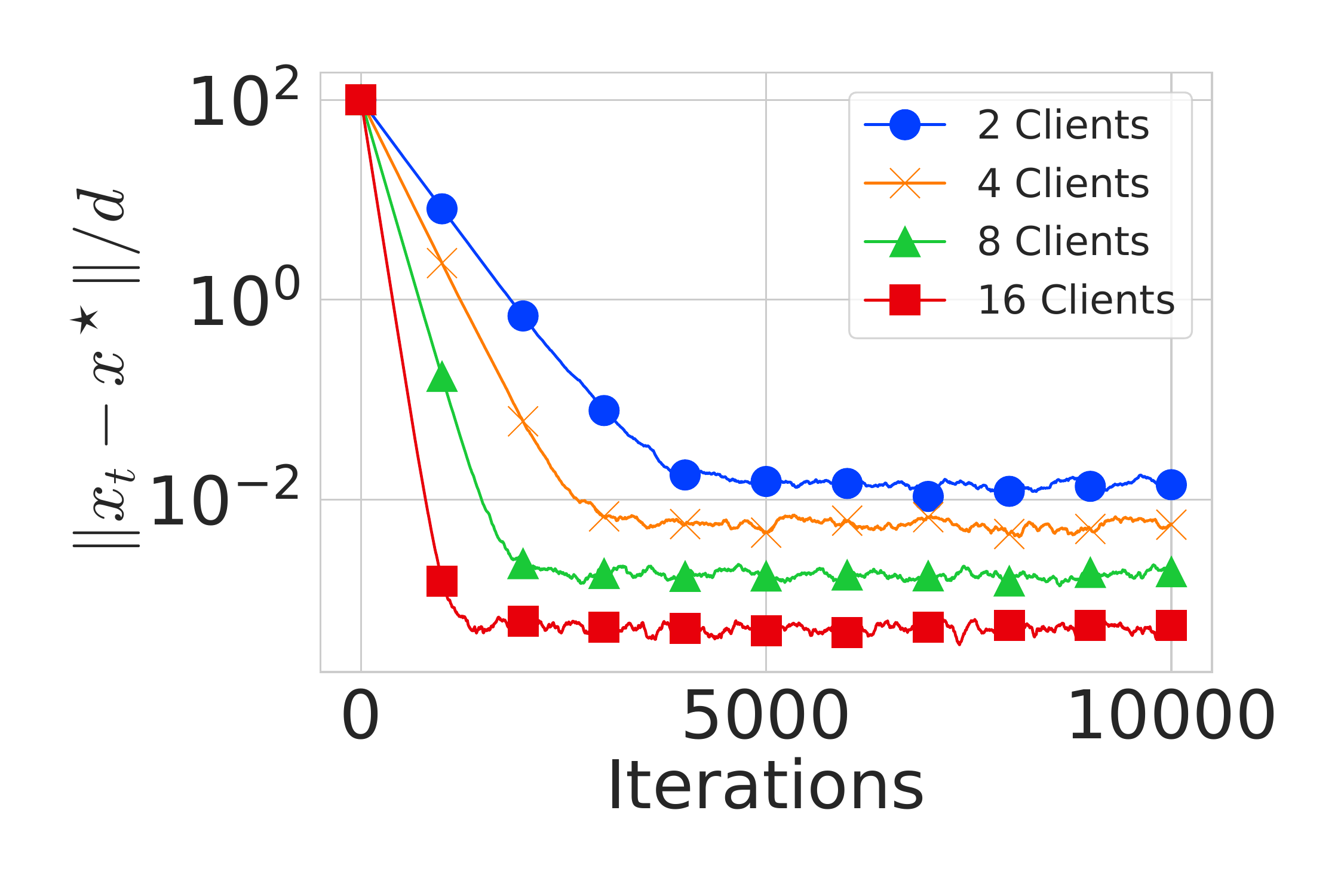}
        \caption{The behavior of \algname{EControl} with different numbers of clients where $d=200$, $\zeta=100, \sigma=50$. We apply Top-K compressor with $\nicefrac{K}{d}=0.1$. The stepsize is fixed $\gamma = \frac{\delta}{100} = 0.001$ for the purpose of illustration. \algname{EControl} exhibits a clear linear speedup by $n$, the number of clients, which verifies the linear parallel speedup property of \algname{EControl}.}
        \label{fig:artificial-node}
    \end{minipage}
\end{figure}

\begin{figure}[t]
    \centering
    \begin{tabular}{ccc}
         \includegraphics[width=0.3\textwidth]{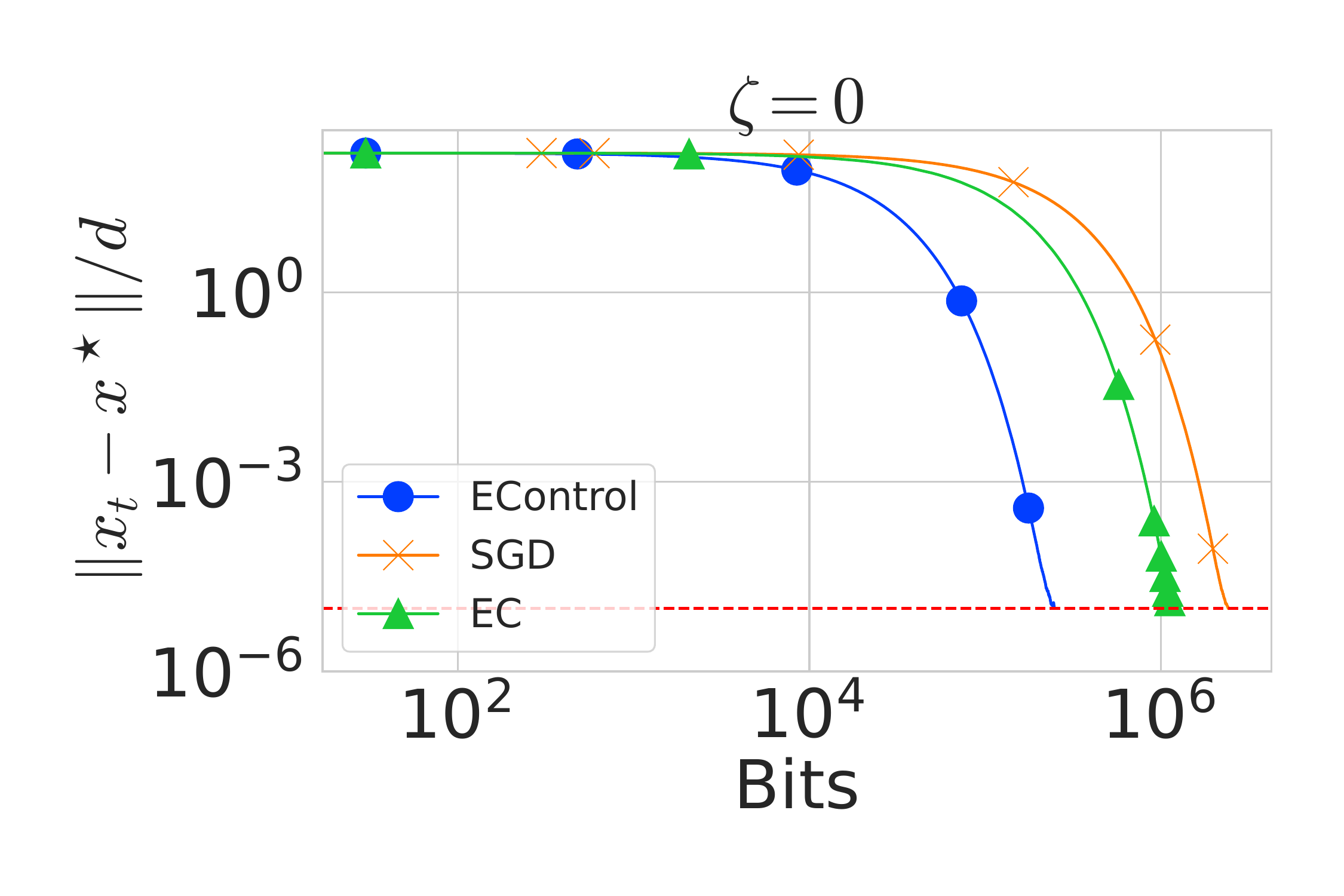}
         & \includegraphics[width=0.3\textwidth]{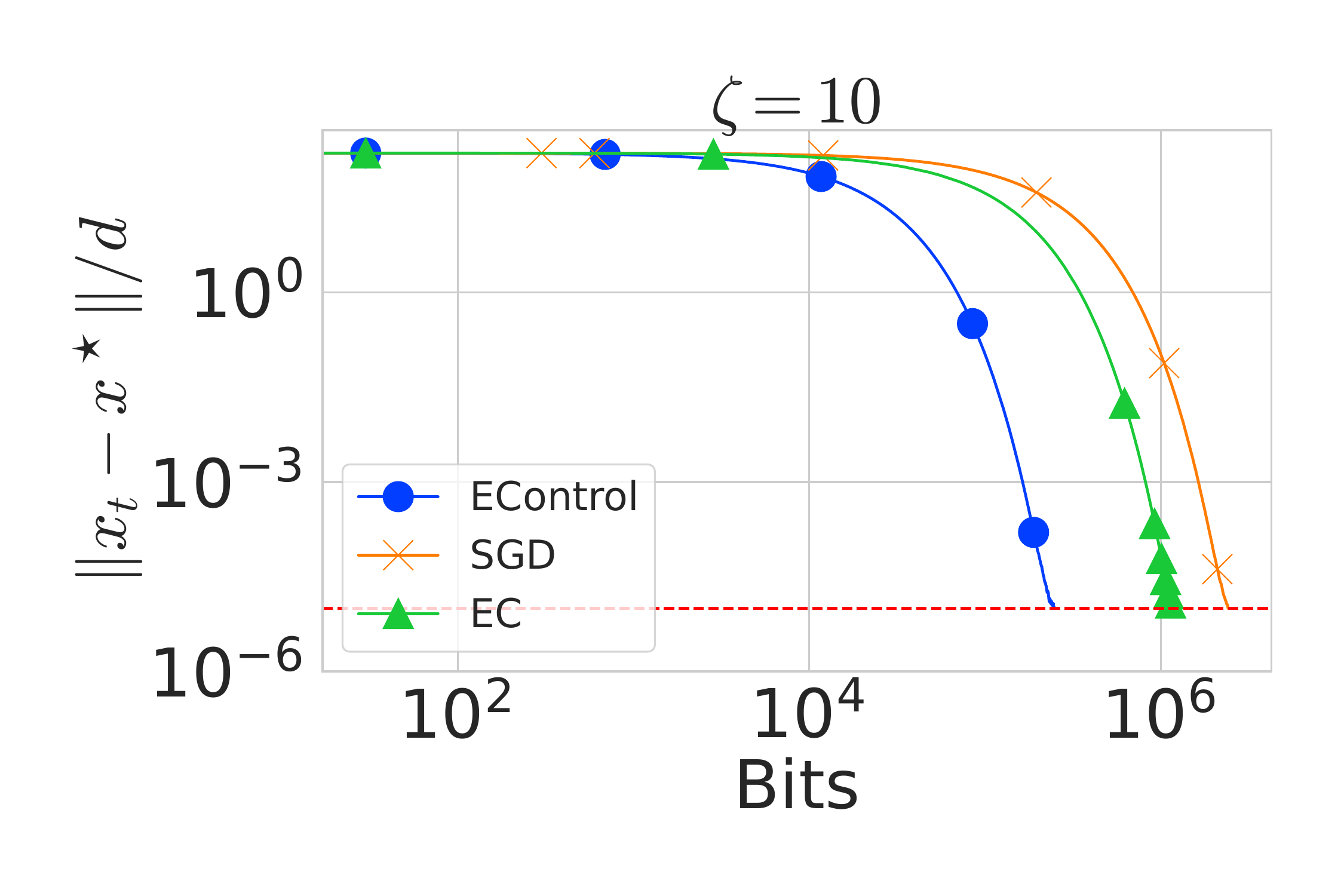}
         & \includegraphics[width=0.3\textwidth]{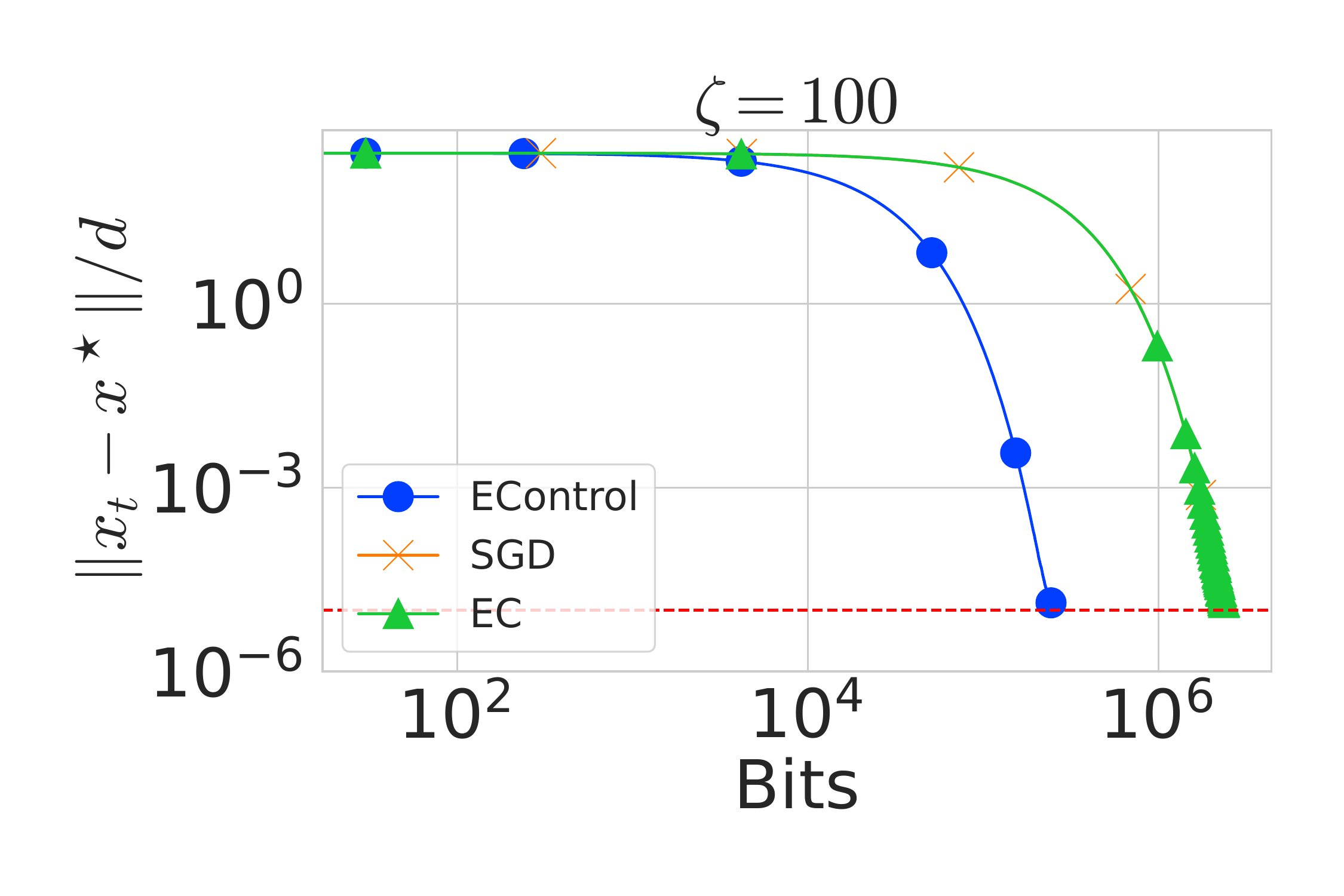}
    \end{tabular}
    \caption{Comparison of mini-batch \algname{SGD}, \algname{EC}, and \algname{EControl} for vairous problem parameters where $n=5, d=300$ and $\sigma=10$. We apply the Top-K compressor with $\nicefrac{K}{d}=0.1$. Stepsizes were tuned for each setting. X-axis represents the number of bits sent. \algname{EControl} is not affected by the data heterogeneity parameter and is superior over \algname{SGD} in terms of the number of bits sent (roughly by a factor of $\nicefrac{d}{K}=10$, as expected from theory).}
    \label{fig:artificial-zeta-sigma}
\end{figure}

\paragraph{Increasing the number of clients.} In \Cref{fig:artificial-node} we investigate the effect of the number of clients on the complexity of \algname{EControl}. Crucially, the theory predicts that \algname{EControl} achieves a linear speedup in terms of the number of clients when using a stochastic gradient. In our experiment, we fix a small stepsize and investigate the error that \algname{EControl} converges to. We see that as the number of clients doubles, the error that \algname{EControl} oscillates around is roughly divided by half.
This confirms our theoretical prediction of the linear speedup. 

\paragraph{Heterogeneity.} In \Cref{fig:artificial-zeta-sigma} we see that \algname{EControl} is not affected by the data heterogeneity parameter $\zeta$ and its complexity stays stable across the three figures. On the other hand, the original \algname{EC} suffers from the increasing $\zeta$, oscillating around higher errors as $\zeta$ increases. Moreover, as \Cref{thm:ef-concurrent-scvx} predicts, the $\cO(\frac{\sigma^2}{\mu n\epsilon})$ term is dominant, and the performance of \algname{EControl} (in terms of the number of bits sent) is superior over that of \algname{SGD} with batch size $n$. The stepsizes $\gamma$ are fine-tuned over $\{5\times 10^{-5}, 10^{-4},5\times 10^{-4}, 10^{-3}, 10^{-2}, 10^{-1}\}$, and for \algname{EControl} we fine-tune $\eta$ over $\{10^{-3}, 5\times 10^{-3}, 10^{-2}, 5\times 10^{-2}, 10^{-1}\}$. 
As $\zeta$ increases, the performance of \algname{EControl} is not affected, while the performance of \algname{EC} deteriorates.

\subsection{Logistic Regression Problem}

Next, we consider the Logistic Regression problem for multi-class classification trained on MNIST dataset \citep{deng2012mnist} and implemented in Pytorch \citep{pazske2019pytorch}. First, we split $50 \%$ of the dataset between $10$ clients according to the labels (the data point with $i$-th labels belongs to client $i+1$). The rest of the data is distributed randomly between clients. Then, for each client, we divide the local data into train ($90 \%$) and test ($10 \%$) sets. Such partition allows to make the problem more heterogeneous. We compare the performance of \algname{EControl}, \algname{EF21-SGDM}, and \algname{EF21}. For all methods, we use Top-K compression operator with $K = \frac{d}{10}$. We fine-tune the stepsizes of the methods over $\{1, 10^{-1}, 10^{-2}, 10^{-3}\}$. Moreover, we fine-tune $\eta$ parameter for \algname{EControl} over $\{0.2, 0.1, 0.05\}$, and for \algname{EF21-SGDM} we set $\eta=0.1$ according to \citep{fatkhullin2023momentum}. We choose the stepsizes that achieve the best performances on the train set such that the test loss does not diverge. The results are presented in \Cref{fig:logreg}. Note that the communication cost of the methods per iteration is the same, therefore, the number of epochs is proportional to the number of communicated bits.  \looseness=-1

We observe that \algname{EControl} has the fastest convergence in terms of training loss. The same behavior is demonstrated with respect to test accuracy. Nevertheless, the test accuracy difference for all methods is within $1$ percent. 

\begin{figure}[!t]
    \centering
    \begin{tabular}{cc}
     \includegraphics[width=0.24\textwidth]{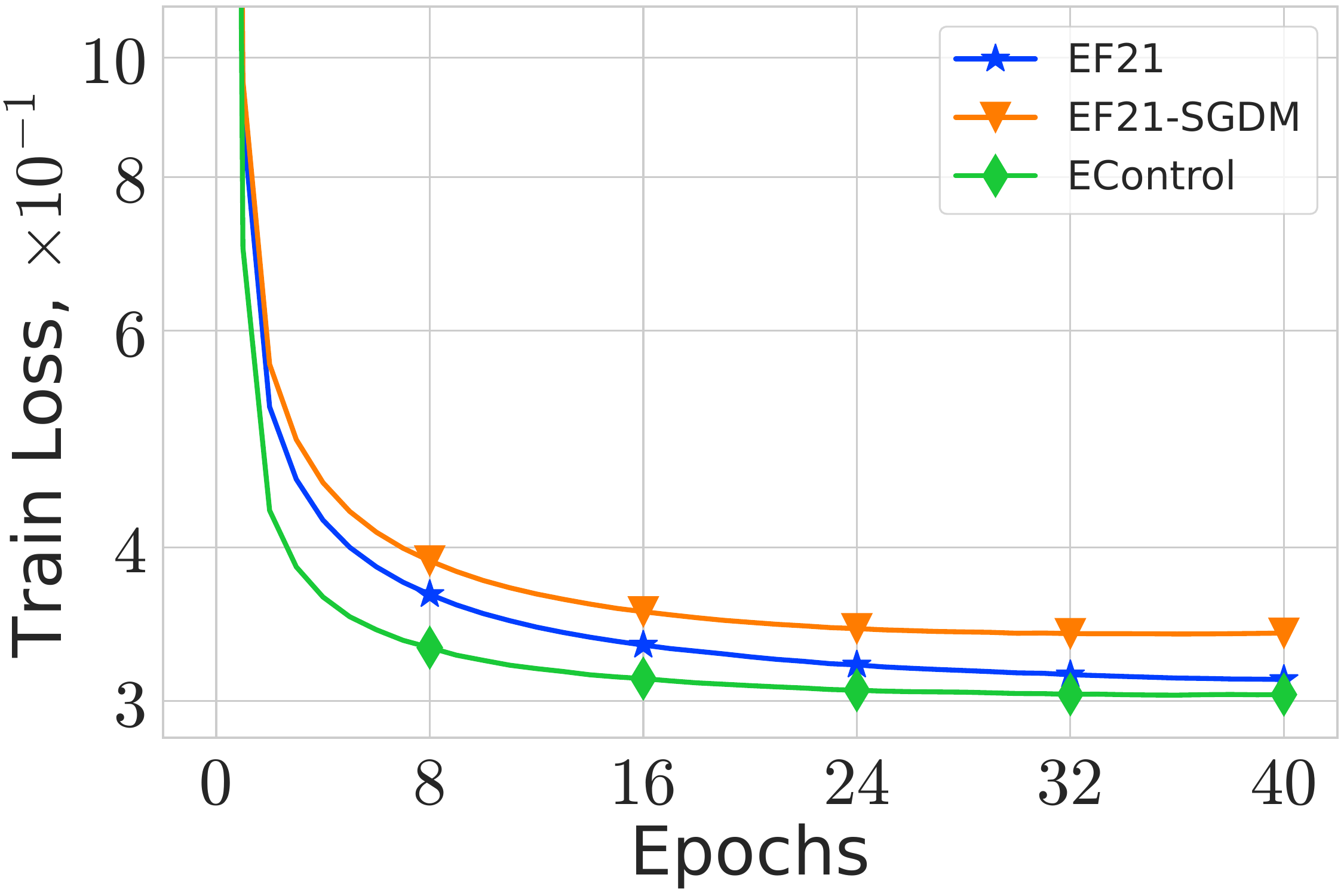} &  
     \includegraphics[width=0.24\textwidth]{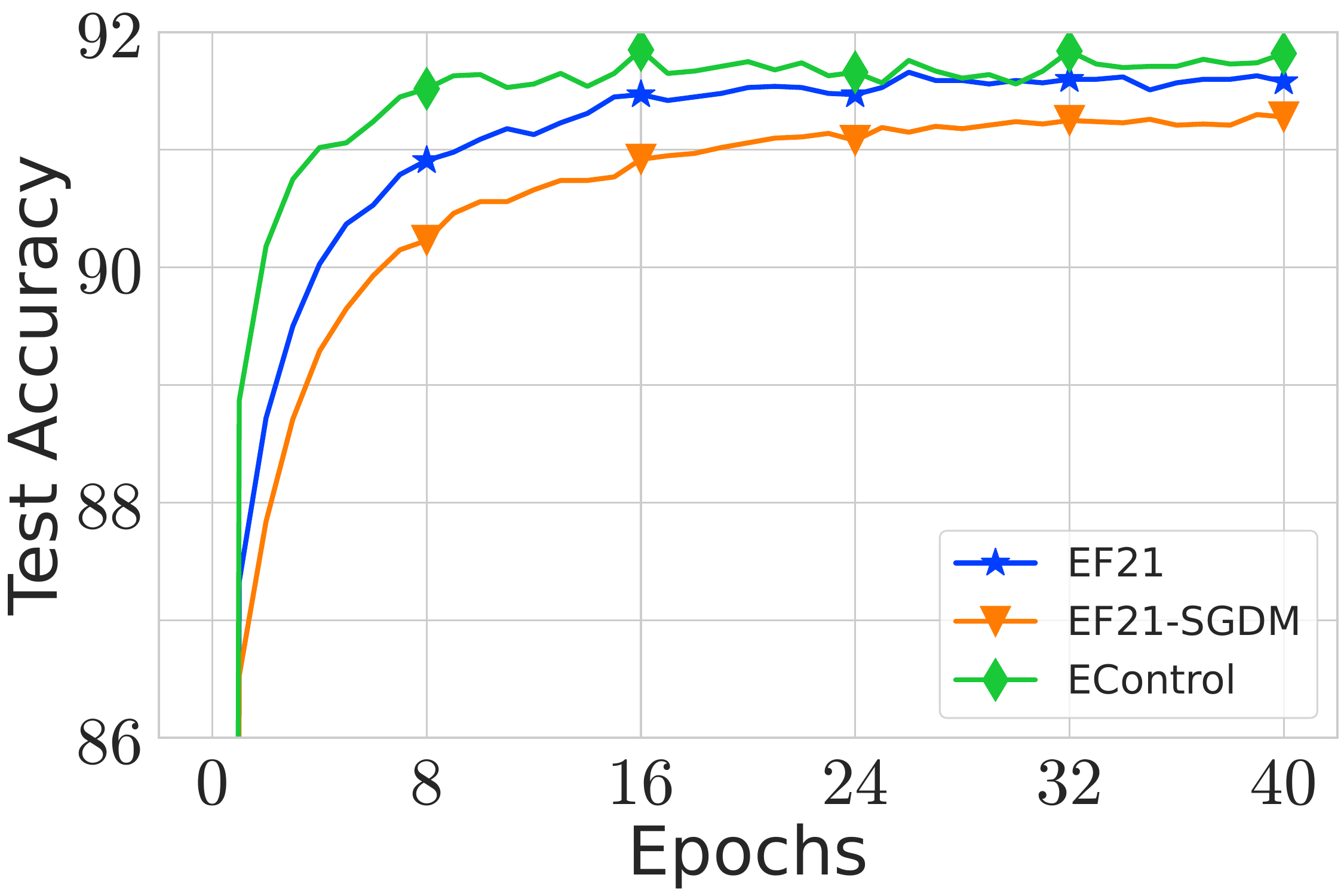}\\
    \end{tabular}
    \caption{The comparison of \algname{EF21}, \algname{EF21-SGDM}, and \algname{EControl} with fine-tuned parameters for Logistic Regression problem on MNIST dataset.}
    \label{fig:logreg}
\end{figure}

\begin{figure}[!t]
    \begin{subfigure}{0.45\textwidth}  
    \centering
    \begin{tabular}{cc}\includegraphics[width=0.5\textwidth]{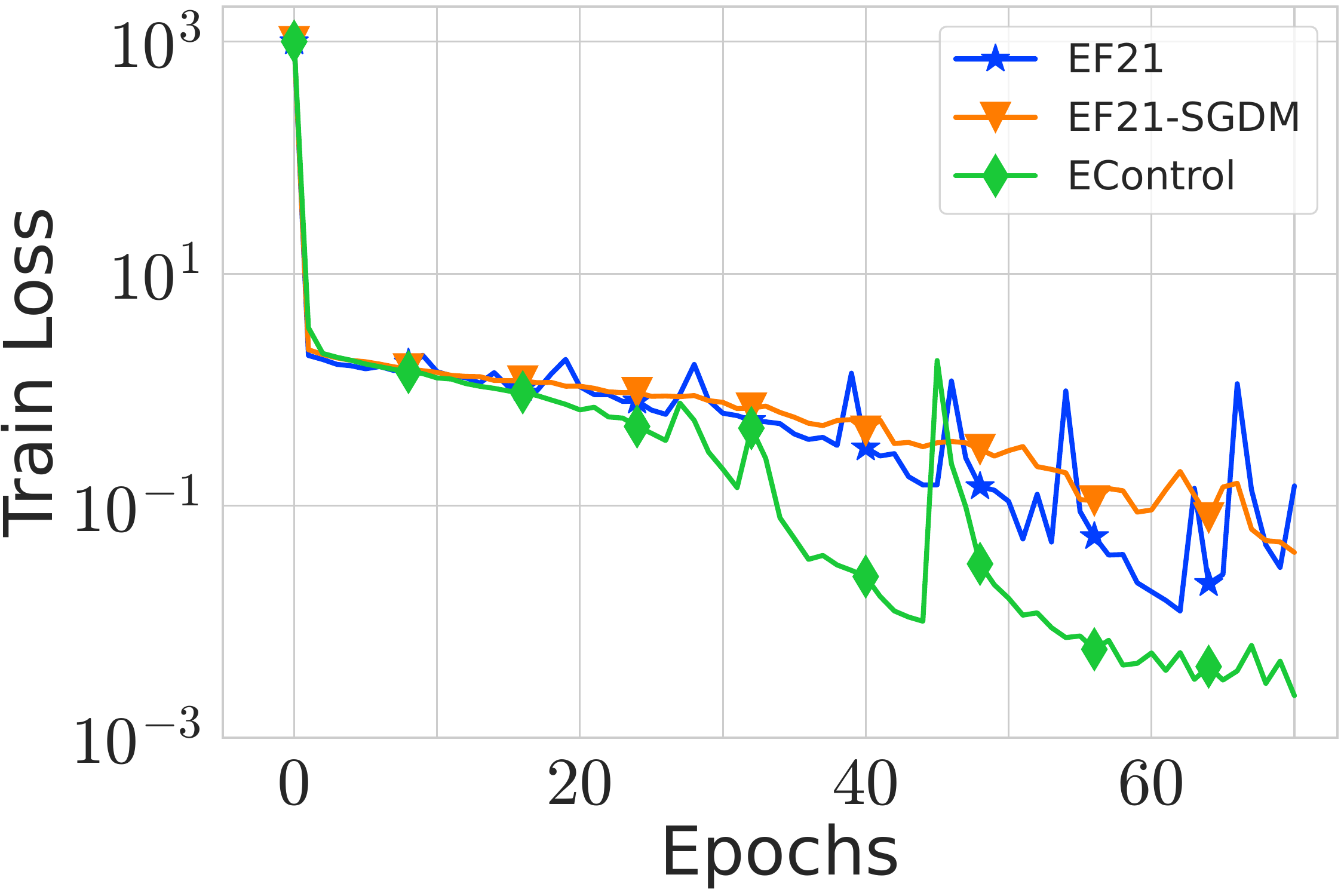} &
    \includegraphics[width=0.5\textwidth]{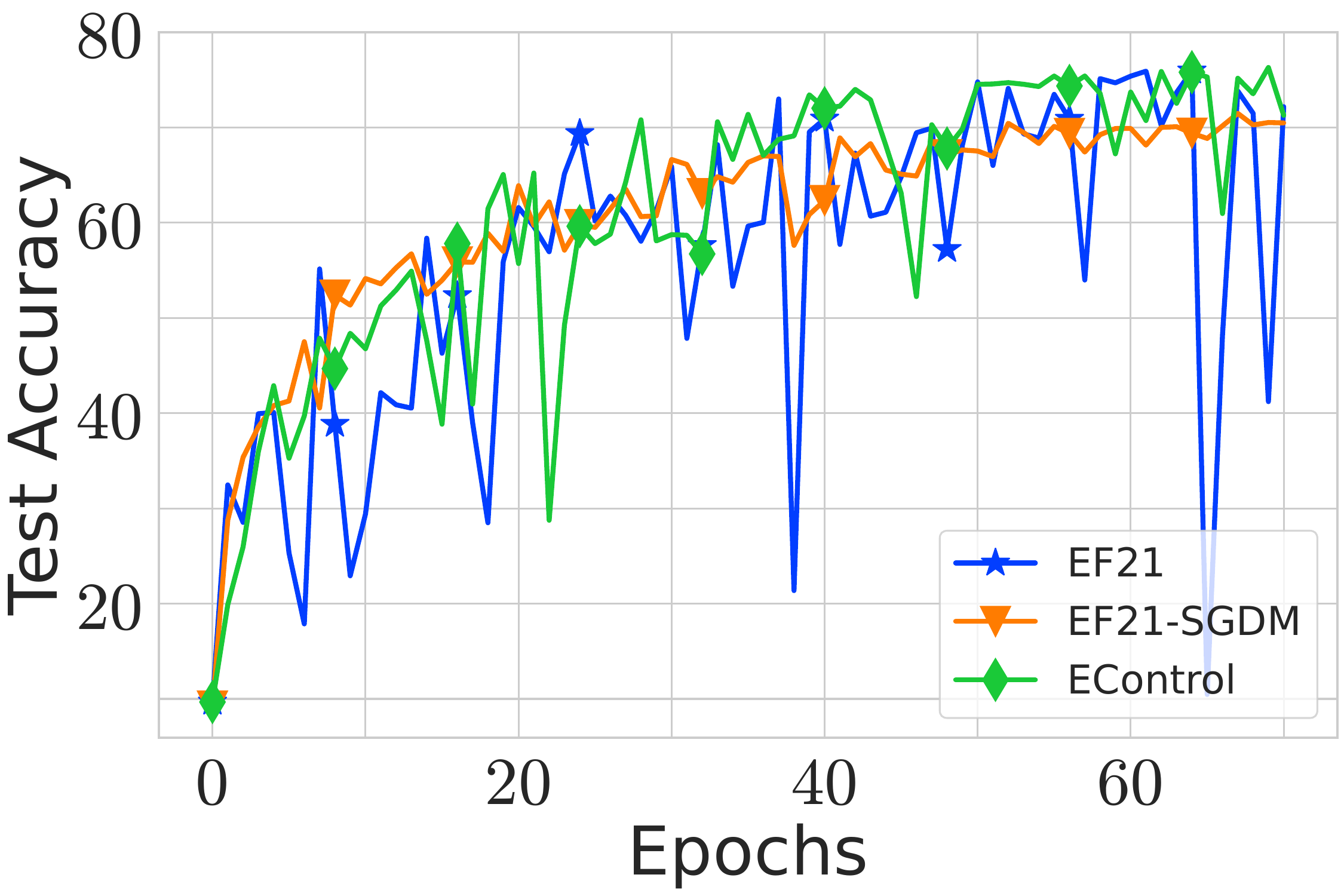}
    \end{tabular}
    \caption{Resnet18}
    \label{fig:resnet18}
    \end{subfigure}
    \hspace{5mm}
    \begin{subfigure}{0.45\textwidth} 
    \centering
    \begin{tabular}{cc}\includegraphics[width=0.5\textwidth]{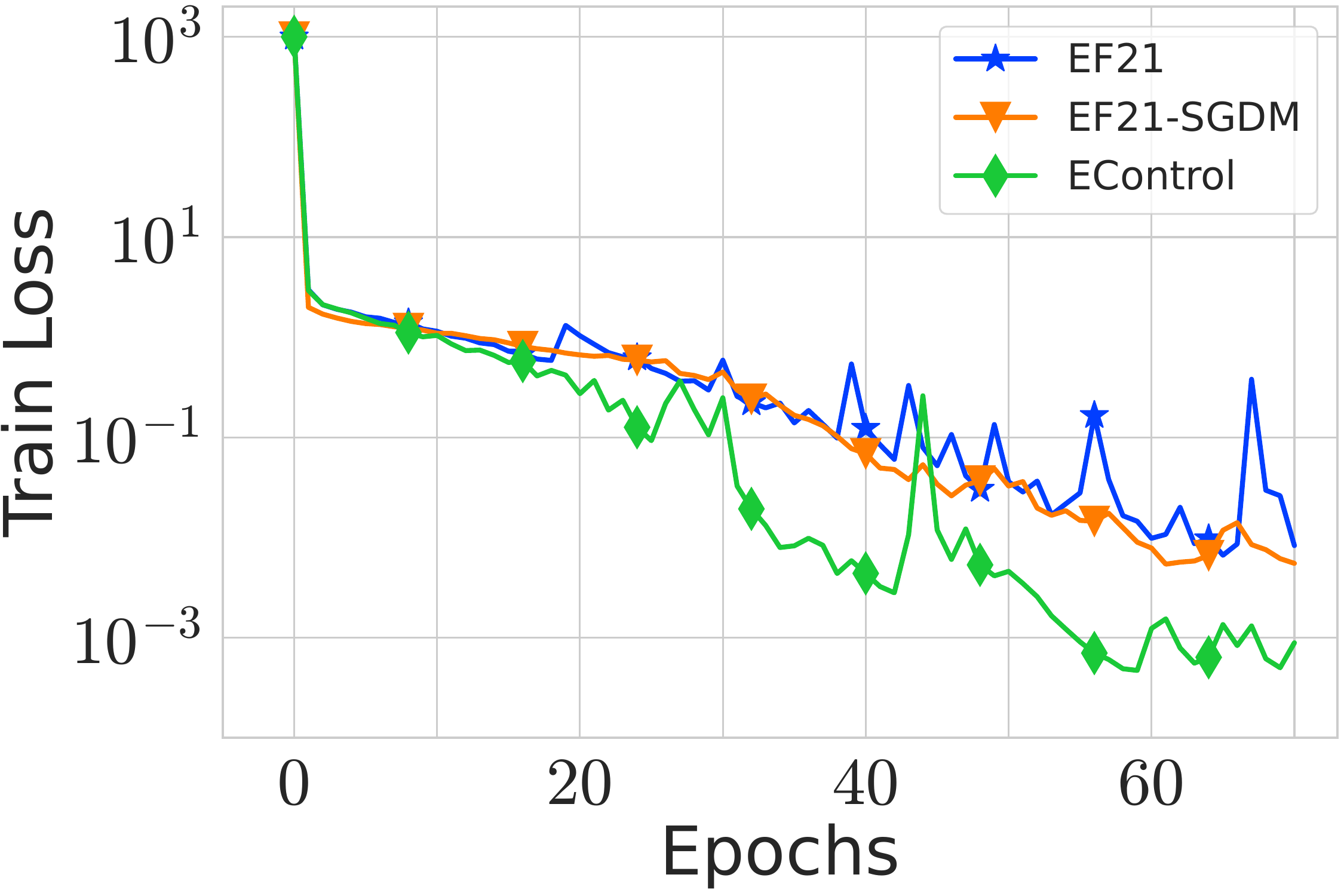} &
    \includegraphics[width=0.5\textwidth]{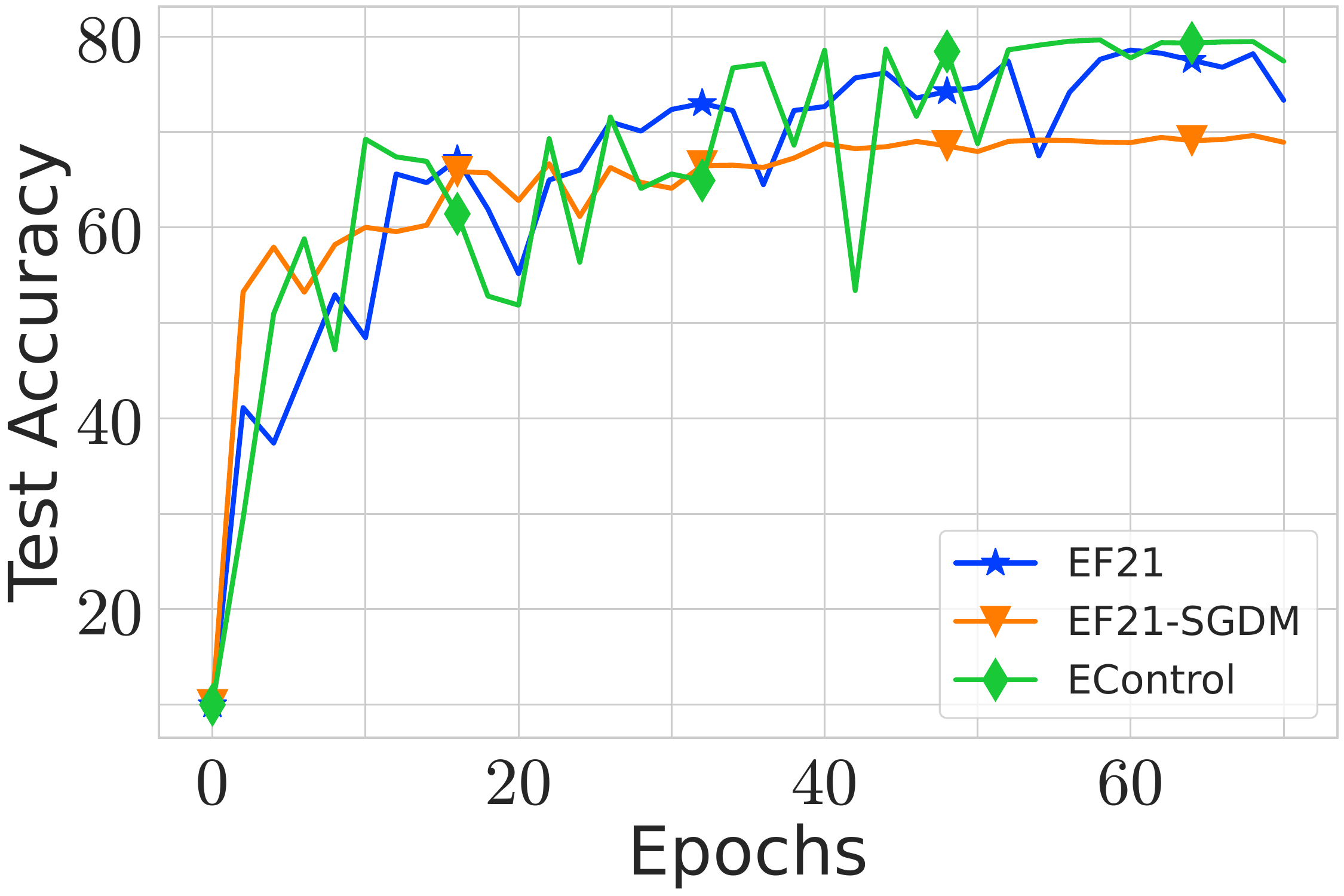}
        \end{tabular}
        \caption{VGG13}
        \label{fig:vgg13}
    \end{subfigure}
    \caption{The comparison of \algname{EF21}, \algname{EF21-SGDM}, and \algname{EControl} with fine-tuned parameters for training Deep Learning models on Cifar-10 dataset.}
    \label{fig:dl}
\end{figure}

\subsection{Training of Deep Learning Models}

Finally, we consider the training of Deep Learning models: Resnet18 \citep{he2016deep} and VGG13 \citep{simonyan2015vgg}.  The implementation is done in Pytorch \citep{pazske2019pytorch}. We run experiments on Cifar-10 \citep{Krizhevsky2014cifar10} dataset. The dataset split across the clients is the same as for the Logistic Regression problem---half is distributed randomly, and another half is portioned according to the labels. For the compression, we utilize Top-K compressor with $\frac{K}{d} = 0.1$. We fine-tune the stepsizes over the set $\{1, 0.1, 0.01\}$ for \algname{EControl}, \algname{EF21-SGDM}, and \algname{EF21} and select the best stepsize on test set. For \algname{EControl} and \algname{EF21-SGDM} we set $\eta=0.1$. 

According to the results in \Cref{fig:resnet18}, all methods achieve similar test accuracy, but \algname{EF21} has more unstable convergence with many ups and downs. \algname{EControl} achieves a better stationary point as the training loss is smaller than for the other two methods. Similar results are demonstrated for VGG13 model; see \Cref{fig:vgg13}. The training with \algname{EControl} allows to reach a stationary point with a smaller training loss. Moreover, \algname{EControl} outperforms other methods from a test accuracy point of view: it gives slightly better test accuracy than \algname{EF21} and considerably higher accuracy than \algname{EF21-SGDM}.

\section{Discussion}

In this work, we propose a novel algorithm \algname{EControl} that provably converges efficiently in all standard settings:\ strongly convex, general convex, and nonconvex, with general contractive compression and without any additional assumption on the problem structure, hence resolving the open theoretical problem therein. We conduct extensive experimental evaluations of our method and show its efficacy in practice. 
Our method incurs little overhead compared to existing implementations of \algname{EC}, and we believe it is an effective and lightweight approach to making EC suitable for distributed training of large machine learning models. 

\section*{Acknowledgments}
We acknowledge funding from a Google Scholar Research Award.

\newpage


\bibliography{reference}
\bibliographystyle{iclr2024_conference}

\newpage
\appendix

\section{Useful lemmas and definitions}

In this section, we state the important notations and useful lemmas that we use in our convergence analysis.
We use the following notation throughout the proofs
\begin{equation}
    \label{notation:F-E}
    F_t \coloneqq \Eb{f(\xx_t)} -f^\star, \quad \text{and} \quad E_t \coloneqq \frac{1}{n}\sum_{i=1}^n \Eb{\norm{\ee_t^i}^2}
\end{equation}
For shortness, in all our proofs we use the notation 
\begin{equation}\label{notation:g-e-h}
    \gg_t^i \eqdef \gg^i(\xx_t), \quad \gg_t \eqdef \frac{1}{n}\sum_{i=1}^n \gg_t^i, \quad \ee_t \eqdef \frac{1}{n}\sum_{i=1}^n \ee_t^i, \quad \text{and} \quad \hh_t \eqdef \frac{1}{n}\sum_{i=1}^n \hh_t^i.
\end{equation}
Besides, we additionally introduce the sequence of virtual iterates which are defined as follows
\begin{equation}\label{notation:virtual_iter}
    \wtilde{\xx}_0 \coloneqq \xx_0, \quad \wtilde{\xx}_{t+1} \coloneqq \wtilde{\xx}_t - \frac{\gamma }{n}\sum_{i=1}^n\gg_t^i.
\end{equation}
Performing simple derivations we get the link between real and virtual iterates of Algorithm~\ref{alg:ef-concurrent}
\begin{align}\label{eq:link_xt_xt_tilde}
    \xx_t - \wtilde{\xx}_t = \frac{\gamma }{n}\sum_{i=1}^n\ee_t^i.
\end{align}
In addition to the notations introduced in\Cref{notation:F-E}, we define
\begin{equation}\label{notation:X_t}
    X_t \coloneqq \Eb{\|\wtilde{\xx}_t-\xx^\star \|^2}.
\end{equation}
Finally, we introduce another quantity that bounds the size of the compressed message:
\begin{equation}
    \label{notation:H-weighted}
    H_t \coloneqq \frac{1}{n}\sum_{i=1}^n\Eb{\norm{\eta\ee_t^i+\gg_t^i-\hh_t^i}^2}
\end{equation}

The following lemmas were taken from other works (as indicated) and we omit their proof.

\begin{lemma}[Lemma $8$ from~\citet{stich2020error}]
    \label{lem:descent-X} Assume $f$ is $L$-smooth and $\mu$-strongly quasi-convex. Let the sequences $\{\xx_t\}$, $\{\ee_t^i\}_{i\in[n]}$ be generated by \Cref{alg:ef-concurrent}. If $\gamma\leq \frac{1}{4L}$, then 
    \begin{equation}
        \label{eq:descent-X-bb}
        X_{t+1} \leq \left(1-\frac{\gamma \mu}{2}\right)X_t-\frac{\gamma }{2}F_t+\frac{\gamma ^2}{n}\sigma^2 + 3L\gamma ^3E_t.
    \end{equation}
\end{lemma}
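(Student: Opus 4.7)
The plan is to bound $X_{t+1}$ by expanding $\|\wtilde{\xx}_{t+1}-\xx^\star\|^2$ using the virtual update rule~\eqref{notation:virtual_iter}, handling stochastic noise, and then converting everything back to $X_t$ and $F_t$ using strong quasi-convexity and smoothness. The error term $E_t$ will appear through the link $\xx_t-\wtilde{\xx}_t = \tfrac{\gamma}{n}\sum_i \ee_t^i$ from~\eqref{eq:link_xt_xt_tilde}.

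\textbf{Step 1 (noise decomposition).} I would write $\frac{1}{n}\sum_i \gg_t^i = \nabla f(\xx_t) + \xi_t$, where $\xi_t \eqdef \frac{1}{n}\sum_i(\gg_t^i - \nabla f_i(\xx_t))$ has conditional mean zero and, by independence across workers in Assumption~\eqref{asmp:bounded_variance}, $\EE{t}{\|\xi_t\|^2} \le \sigma^2/n$. Expanding the square and taking conditional expectation gives
\[
\EE{t}{\|\wtilde{\xx}_{t+1}-\xx^\star\|^2} = \|\wtilde{\xx}_t-\xx^\star\|^2 - 2\gamma\lin{\nabla f(\xx_t), \wtilde{\xx}_t-\xx^\star} + \gamma^2\|\nabla f(\xx_t)\|^2 + \tfrac{\gamma^2\sigma^2}{n}.
\]

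\textbf{Step 2 (splitting the inner product).} I would split $\wtilde{\xx}_t-\xx^\star = (\xx_t-\xx^\star)+(\wtilde{\xx}_t-\xx_t)$. For the first piece, $\mu$-strong quasi-convexity yields $-2\gamma\lin{\nabla f(\xx_t), \xx_t-\xx^\star}\le -2\gamma F_t^{\text{det}} - \gamma\mu\|\xx_t-\xx^\star\|^2$ (taking $F_t^{\text{det}}$ to be the conditional analog of $F_t$, which becomes $F_t$ after full expectation). For the cross term $-2\gamma\lin{\nabla f(\xx_t), \wtilde{\xx}_t-\xx_t}=2\gamma\lin{\nabla f(\xx_t), \xx_t-\wtilde{\xx}_t}$, I would apply Young's inequality with a weight $\alpha=\tfrac{1}{2L}$:
\[
2\gamma\lin{\nabla f(\xx_t), \xx_t-\wtilde{\xx}_t} \le \tfrac{\gamma}{2L}\|\nabla f(\xx_t)\|^2 + 2L\gamma\|\xx_t-\wtilde{\xx}_t\|^2.
\]

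\textbf{Step 3 (absorbing the gradient term).} The total coefficient on $\|\nabla f(\xx_t)\|^2$ is $\gamma^2+\tfrac{\gamma}{2L}\le \gamma\bigl(\tfrac{1}{4L}+\tfrac{1}{2L}\bigr)=\tfrac{3\gamma}{4L}$ using $\gamma\le \tfrac{1}{4L}$. By $L$-smoothness of $f$ and the fact that $\xx^\star$ is a minimizer, $\|\nabla f(\xx_t)\|^2\le 2L(f(\xx_t)-f^\star)$, so this is at most $\tfrac{3\gamma}{2}(f(\xx_t)-f^\star)$. Combined with the $-2\gamma(f(\xx_t)-f^\star)$ from quasi-convexity, this leaves $-\tfrac{\gamma}{2}(f(\xx_t)-f^\star)$, matching the target $-\tfrac{\gamma}{2}F_t$ after taking expectation.

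\textbf{Step 4 (reshape to $X_t$ and collect the $E_t$ terms).} Using $\|\wtilde{\xx}_t-\xx^\star\|^2\le 2\|\xx_t-\xx^\star\|^2+2\|\xx_t-\wtilde{\xx}_t\|^2$, I would trade $-\gamma\mu\|\xx_t-\xx^\star\|^2 \le -\tfrac{\gamma\mu}{2}\|\wtilde{\xx}_t-\xx^\star\|^2 + \gamma\mu\|\xx_t-\wtilde{\xx}_t\|^2$. From~\eqref{eq:link_xt_xt_tilde} and Jensen/Cauchy--Schwarz, $\|\xx_t-\wtilde{\xx}_t\|^2 \le \gamma^2\cdot\tfrac{1}{n}\sum_i\|\ee_t^i\|^2$, so after full expectation this is $\gamma^2 E_t$. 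The two error contributions become $2L\gamma\cdot \gamma^2 E_t + \mu\gamma\cdot\gamma^2 E_t \le 3L\gamma^3 E_t$ since $\mu\le L$, matching the claim. Assembling all pieces and taking full expectation yields exactly~\eqref{eq:descent-X-bb}.

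The only mild subtlety is keeping the Young's weight tight enough that the residual gradient term can be absorbed under $\gamma\le \tfrac{1}{4L}$ while still leaving the $-\tfrac{\gamma}{2}F_t$ margin; the choice $\alpha=\tfrac{1}{2L}$ does this cleanly. Everything else is a routine expansion, so I do not anticipate a real obstacle — this is essentially the classical virtual-iterate descent argument of \citet{stich2020error} transported to our notation.
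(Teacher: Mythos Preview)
Your argument is correct and is precisely the standard virtual-iterate descent computation from \citet{stich2020error}; the paper itself omits the proof of this lemma and simply cites that reference, so there is nothing further to compare. Each step checks out: the noise decomposition, the Young split with weight $\tfrac{1}{2L}$, the absorption via $\|\nabla f(\xx_t)\|^2\le 2L(f(\xx_t)-f^\star)$ under $\gamma\le\tfrac{1}{4L}$, and the final $(2L+\mu)\gamma^3 E_t\le 3L\gamma^3 E_t$ all land on the stated inequality.
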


The descent lemma in $\wtilde  F_t$ is taken from \citep{stich2020error}
\begin{lemma}[Lemma 9 from \citet{stich2020error}]
    \label{lem:descent-Ftilde}
    Assuming that $f$ is $L$-smooth, and $\xx_t$ and $\ee_t$ are generated by \Cref{alg:ef-concurrent}, then
    \begin{equation}
        \label{eq:descent-Ftilde}
        \wtilde{F}_{t+1} \leq \wtilde{F}_t - \frac{\gamma}{4}\Eb{\norm{\nabla f(\xx_t)}^2} + \frac{\gamma^3L^2}{2}E_t+\frac{\gamma^2L\sigma^2}{2n}
    \end{equation}
\end{lemma}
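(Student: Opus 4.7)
The plan is to bound $\wtilde F_{t+1} = \Eb{f(\wtilde\xx_{t+1})} - f^\star$ by applying $L$-smoothness of $f$ to the two virtual iterates $\wtilde\xx_t$ and $\wtilde\xx_{t+1}$, and then carefully handling the fact that the gradient is evaluated at the true iterate $\xx_t$ while smoothness is applied at $\wtilde\xx_t$. Concretely, starting from
\[
f(\wtilde\xx_{t+1}) \le f(\wtilde\xx_t) + \lin{\nabla f(\wtilde\xx_t),\,\wtilde\xx_{t+1}-\wtilde\xx_t} + \tfrac{L}{2}\norm{\wtilde\xx_{t+1}-\wtilde\xx_t}^2,
\]
and substituting the virtual update \eqref{notation:virtual_iter}, the increment is $\wtilde\xx_{t+1}-\wtilde\xx_t = -\tfrac{\gamma}{n}\sum_i \gg_t^i$. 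Taking the conditional expectation with respect to the stochasticity at step $t$ and using unbiasedness from \eqref{asmp:bounded_variance} turns the linear term into $-\gamma\lin{\nabla f(\wtilde\xx_t),\nabla f(\xx_t)}$.

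Next, I would apply the polarization identity $2\lin{a,b}=\norm{a}^2+\norm{b}^2-\norm{a-b}^2$ with $a=\nabla f(\wtilde\xx_t)$ and $b=\nabla f(\xx_t)$ to rewrite the linear term as
\[
-\tfrac{\gamma}{2}\norm{\nabla f(\wtilde\xx_t)}^2 - \tfrac{\gamma}{2}\norm{\nabla f(\xx_t)}^2 + \tfrac{\gamma}{2}\norm{\nabla f(\wtilde\xx_t)-\nabla f(\xx_t)}^2.
\]
The first term is nonpositive and can be dropped. The error term is controlled by $L$-smoothness of $f$ together with the link \eqref{eq:link_xt_xt_tilde}: $\norm{\nabla f(\wtilde\xx_t)-\nabla f(\xx_t)}^2 \le L^2\norm{\wtilde\xx_t-\xx_t}^2 \le \gamma^2 L^2 \cdot \tfrac{1}{n}\sum_i \norm{\ee_t^i}^2$, where the last step uses Jensen's inequality $\norm{\tfrac{1}{n}\sum_i \ee_t^i}^2\le \tfrac{1}{n}\sum_i\norm{\ee_t^i}^2$. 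This is exactly what yields the $\tfrac{\gamma^3 L^2}{2}E_t$ term after taking full expectation.

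For the quadratic smoothness term, I would split the stochastic average into its mean and noise: $\tfrac{1}{n}\sum_i \gg_t^i = \nabla f(\xx_t) + \tfrac{1}{n}\sum_i(\gg_t^i-\nabla f_i(\xx_t))$, so by independence of the noise across clients and Assumption~\eqref{asmp:bounded_variance},
\[
\tfrac{L\gamma^2}{2}\Eb{\norm{\tfrac{1}{n}\sum_i \gg_t^i}^2\Big|\cF_t} \;\le\; \tfrac{L\gamma^2}{2}\norm{\nabla f(\xx_t)}^2 + \tfrac{L\gamma^2\sigma^2}{2n},
\]
producing the last summand in \eqref{eq:descent-Ftilde}. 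Combining everything gives
\[
\Eb{f(\wtilde\xx_{t+1})\mid\cF_t} \le f(\wtilde\xx_t) - \tfrac{\gamma(1-L\gamma)}{2}\norm{\nabla f(\xx_t)}^2 + \tfrac{\gamma^3L^2}{2n}\sum_i\norm{\ee_t^i}^2 + \tfrac{L\gamma^2\sigma^2}{2n}.
\]
Invoking the standing stepsize restriction $\gamma\le \tfrac{1}{4L}$ (carried over from Lemma~\ref{lem:descent-X}; it suffices even that $\gamma\le\tfrac{1}{2L}$), we absorb $\tfrac{\gamma(1-L\gamma)}{2}\ge\tfrac{\gamma}{4}$, take the outer expectation, and conclude \eqref{eq:descent-Ftilde}.

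The only subtle step is the bookkeeping between $\nabla f(\wtilde\xx_t)$ and $\nabla f(\xx_t)$: naively, smoothness is used to justify descent on $\wtilde\xx_t$ but we want the inequality to involve $\norm{\nabla f(\xx_t)}^2$ so that it can later be combined with a bound on $\xx_t-\xx^\star$ or similar. The polarization identity combined with the smoothness bound on $\nabla f(\wtilde\xx_t)-\nabla f(\xx_t)$ is what makes this conversion clean and produces the error-proportional term $\tfrac{\gamma^3 L^2}{2}E_t$ rather than a looser one. Beyond that, the argument is a routine descent-lemma calculation.
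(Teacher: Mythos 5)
Your proof is correct and is essentially the standard perturbed-iterate (virtual-sequence) descent argument that the cited source (Stich \& Karimireddy, 2020) uses for this lemma; the paper itself states the lemma without proof, importing it by reference, so there is no in-paper proof to compare against. Two small caveats worth flagging: (i) the bound $\frac{\gamma(1-L\gamma)}{2}\ge\frac{\gamma}{4}$ does require a stepsize restriction (e.g.\ $\gamma\le\frac{1}{2L}$), which is not stated in the lemma as written but is always satisfied under the stepsize choices of the theorems that invoke it; and (ii) the step giving the $\frac{\sigma^2}{n}$ variance reduction relies on independence of the stochastic gradients across clients, which Assumption~3 does not state explicitly but which is implicitly assumed throughout (it underlies every linear-speedup claim in the paper). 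Neither of these is a gap in your argument, just bookkeeping you are right to be careful about.
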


For the sake of completeness, we list two summation lemmas from \citep{stich2020communication} without proofs. The first lemma is used in order to derive convergence guarantees in the strongly convex case.

\begin{lemma}[Lemma 25 from \citet{stich2020communication}]\label{lem:stich2020communication_lemma25}
    Let $\{r_t\}_{t\ge 0}$ and $\{s_t\}_{t\ge 0}$ be sequences of positive numbers satisfying 
    \begin{equation}
        r_{t+1} \le 
        (1-\min\{\gamma A, F\})r_t 
        - B\gamma s_t 
        + C\gamma^2 
        + D\gamma^3,
    \end{equation}
    for some positive constants $A, B > 0, C, D \ge 0,$ and for constant stepsize $0 < \gamma \le \frac{1}{E},$ for $E \ge 0,$ and for parameter $0 < F\le 1.$ Then there exists a constant stepsize $\gamma \le \frac{1}{E}$ such that 
    \begin{eqnarray*}
        \frac{B}{W_T}\sum_{t=0}^T w_ts_t 
        + \min\left\{A, \frac{F}{\gamma}\right\} r_{T+1} 
        &\le& r_0\left(E+\frac{A}{F}\right)\exp\left(-\min\left\{\frac{A}{E}, F\right\}(T+1)\right)\\
        && \;+\;
        \frac{2C\ln\tau}{A(T+1)}
        + \frac{D\ln^2\tau}{A^2(T+1)^2}    
    \end{eqnarray*}
    for $w_t \coloneqq (1-\min\{\gamma A,F\})^{-(t+1)},$ $W_T \coloneqq \sum_{t=0}^{T} w_t,$ and 
    \[
    \tau \coloneqq \max\left\{\exp(1), \min \left\{\frac{A^2r_0(T+1)^2}{C}, \frac{A^3r_0(T+1)^3}{D}\right\}\right\}.
    \]
\end{lemma}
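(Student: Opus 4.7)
The plan is to use a standard weighted-telescoping argument followed by a careful stepsize tuning. Set $q := \min\{\gamma A, F\}$ so that $w_t = (1-q)^{-(t+1)}$ satisfies the convenient identity $w_t(1-q) = w_{t-1}$ (with the convention $w_{-1} := 1$). Multiplying the given recursion by $w_t$ and summing from $t=0$ to $T$ produces the telescoping identity
$$ w_T r_{T+1} + B\gamma \sum_{t=0}^T w_t s_t \le r_0 + (C\gamma^2 + D\gamma^3) W_T. $$
Dividing by $\gamma W_T$ and using the elementary bound $\frac{w_T}{\gamma W_T} = \frac{q/\gamma}{1-(1-q)^{T+1}} \ge q/\gamma = \min\{A, F/\gamma\}$ yields the master inequality
$$ \min\{A, F/\gamma\}\, r_{T+1} + \frac{B}{W_T}\sum_{t=0}^T w_t s_t \le \frac{r_0}{\gamma W_T} + C\gamma + D\gamma^2. \qquad (\ast) $$

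Next I would bound $1/(\gamma W_T)$. Using the closed form $W_T = \frac{(1-q)^{-(T+1)}-1}{q}$ together with $(1-q)^{-1} \ge e^q$ gives $\frac{1}{\gamma W_T} \le \frac{q/\gamma}{e^{q(T+1)}-1}$. Two complementary estimates of this quantity will be used: $\frac{1}{\gamma W_T} \le \frac{1}{\gamma(T+1)}$ (from $e^x - 1 \ge x$), which is tight when $q(T+1)$ is small, and $\frac{1}{\gamma W_T} \le \frac{2q}{\gamma} e^{-q(T+1)}$ (valid once $q(T+1) \ge \ln 2$), which exposes the exponential decay.

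The substantive step is the stepsize choice. I would pick
$$ \gamma := \min\!\left\{\frac{1}{E},\, \frac{F}{A},\, \frac{\ln \tau}{A(T+1)}\right\}, $$
with $\tau$ as defined in the statement. The caps $1/E$ and $F/A$ keep $\gamma$ in the admissible range and simultaneously ensure that $q/\gamma \le A$ and $1/\gamma \le E + A/F$, while the third cap arranges the noise trade-off. A short case analysis follows. If the third cap is active, then $q = \gamma A = \frac{\ln \tau}{T+1}$, so $(1-q)^{T+1} \le 1/\tau$; substituting into $(\ast)$, the three RHS terms become $\frac{r_0 A}{\tau - 1}$, $\frac{C\ln \tau}{A(T+1)}$, and $\frac{D\ln^2 \tau}{A^2(T+1)^2}$, and the definition $\tau = \max\{e, \min\{A^2 r_0(T+1)^2/C,\, A^3 r_0(T+1)^3/D\}\}$ is precisely the choice that pins the first of these below the other two up to constants (the outer $\max\{e,\cdot\}$ keeps $\ln \tau \ge 1$ and the inner $\min$ picks whichever of $C\gamma$ or $D\gamma^2$ is the binding noise). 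Otherwise, $\gamma = \min\{1/E, F/A\}$, so $q \ge \min\{A/E, F\}$ and $1/\gamma \le E + A/F$; applying the second (exponential) bound on $1/(\gamma W_T)$ produces the term $r_0(E + A/F)\exp(-\min\{A/E, F\}(T+1))$, while $C\gamma + D\gamma^2$ is dominated by the corresponding $\ln\tau$-terms by the case hypothesis.

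The main obstacle, and the reason the statement of the lemma looks baroque, is not the telescoping but the bookkeeping for the three simultaneous caps on $\gamma$ and the three noise regimes encoded by $\tau$. One must verify that the chosen $\tau$ correctly balances the $C\gamma$-dominated, the $D\gamma^2$-dominated, and the purely exponential regimes, and that the floor $\tau \ge e$ prevents the bound from degenerating when $C, D$ are very small relative to $A^2 r_0(T+1)^2$. Once this case analysis is carried out carefully, combining the resulting estimates of $\frac{r_0}{\gamma W_T} + C\gamma + D\gamma^2$ into $(\ast)$ yields exactly the stated bound.
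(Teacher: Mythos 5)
The paper does not reprove this lemma (it is quoted from \citet{stich2020communication} explicitly ``without proofs''), so there is no in-paper argument to compare against; your proposal is the standard weighted-telescoping plus three-cap stepsize-tuning argument, which is the right approach and very likely the one used in the cited source. Your telescoping step and the master inequality $(\ast)$ are correct, as is the overall case structure. Two small bookkeeping corrections are needed, though. First, your exponential estimate $\frac{1}{\gamma W_T}\le \frac{2q}{\gamma}e^{-q(T+1)}$ (requiring $q(T+1)\ge\ln 2$) produces the prefactor $2q/\gamma=2A$, which is not in general bounded by $E+A/F$ (e.g.\ $E$ small, $F$ close to $1$). The cleaner route is to use $W_T\ge w_T=(1-q)^{-(T+1)}\ge e^{q(T+1)}$, which gives $\frac{1}{\gamma W_T}\le \frac{1}{\gamma}e^{-q(T+1)}$ with prefactor $1/\gamma=\max\{E,A/F\}\le E+A/F$ and no side condition. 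Second, when the third cap is active and $\tau=A^3 r_0(T+1)^3/D$ is the binding choice, the leftover $\frac{r_0}{\gamma W_T}\le \frac{r_0A(T+1)}{\tau\ln\tau}=\frac{D}{A^2(T+1)^2\ln\tau}$ must be absorbed somewhere; bounding it by $\frac{D\ln^2\tau}{A^2(T+1)^2}$ (via $\ln^3\tau\ge1$) together with $D\gamma^2=\frac{D\ln^2\tau}{A^2(T+1)^2}$ gives a total $D$-contribution of $\frac{2D\ln^2\tau}{A^2(T+1)^2}$, a factor of two more than the stated coefficient, so your account of that sub-case either needs a sharper estimate or a slightly larger constant in the final bound. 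Neither issue affects the order of the final rate or the structure of your argument.
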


\begin{remark}[Remark 26 from \citet{stich2020communication}]\label{remark:stich2020communication_lemma25} Lemma~\ref{lem:stich2020communication_lemma25} established a bound of the order
\begin{eqnarray*}
    \wtilde{\cO}\left(r_0\left(E+\frac{A}{F}\right)\exp\left(-\min\left\{\frac{A}{E}, F\right\}(T+1)\right)
    + \frac{C}{AT}
    + \frac{D}{A^2T^2}\right)
\end{eqnarray*}
that decreases with $T$. To ensure that this expression is smaller than $\varepsilon$, 
\begin{eqnarray*}
    T =
    \wtilde{\cO}\left(\frac{C}{A\varepsilon} 
    + \frac{\sqrt{D}}{A\sqrt{\varepsilon}}
    + \frac{1}{F}\log\frac{1}{\varepsilon}
    + \frac{E}{A}\log\frac{1}{\varepsilon}\right) 
    =\wtilde{\cO}\left(
    \frac{C}{A\varepsilon} 
    + \frac{\sqrt{D}}{A\sqrt{\varepsilon}}
    + \frac{1}{F}
    + \frac{E}{A}
    \right)
\end{eqnarray*}
steps are sufficient. 
\end{remark}

The second lemma shows the convergence rate in the nonconvex or convex settings. 

\begin{lemma}[Lemma 27 from \citet{stich2020communication}]\label{lem:stich2020communication_lemma27}
    Let $\{r_t\}_{t\ge 0}$ and $\{s_t\}_{t\ge 0}$ be sequences of positive numbers satisfying 
    \begin{equation}
        r_{t+1} \le 
        r_t 
        - B\gamma s_t 
        + C\gamma^2 
        + D\gamma^3,
    \end{equation}
    for some positive constants $A, B > 0, C, D \ge 0,$ and for constant stepsize $0 < \gamma \le \frac{1}{E},$ for $E \ge 0,$ and for parameter $0 < F\le 1.$ Then there exists a constant stepsize $\gamma \le \frac{1}{E}$ such that 
    \begin{eqnarray}\label{lem:stich2020communication_lemma27_eq16}
        \frac{B}{T+1}\sum_{t=0}^T s_t 
        \le \frac{Er_0}{T+1}
        + 2D^{1/3}\left(\frac{r_0}{T+1}\right)^{2/3}
        + 2\left(\frac{Cr_0}{T+1}\right)^{1/2}.
    \end{eqnarray}
\end{lemma}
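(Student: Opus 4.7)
The plan is to prove this by the standard telescope-and-tune recipe for recurrences with polynomial perturbations in the stepsize. First, I would sum the one-step inequality $r_{t+1} \le r_t - B\gamma s_t + C\gamma^2 + D\gamma^3$ from $t=0$ to $t=T$, telescope the $r_t$ terms, and use the positivity $r_{T+1}\ge 0$ to obtain
\[
B\gamma \sum_{t=0}^T s_t \le r_0 + (T+1)\bigl(C\gamma^2 + D\gamma^3\bigr).
\]
Dividing both sides by $\gamma(T+1)$ reduces the problem to bounding
\[
\frac{B}{T+1}\sum_{t=0}^T s_t \le \frac{r_0}{\gamma(T+1)} + C\gamma + D\gamma^2
\]
by a clever choice of admissible stepsize $\gamma \in (0, 1/E]$. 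Notice that the right-hand side has one decreasing term in $\gamma$ and two increasing terms, so the bound is finite and benefits from tuning.

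Next, I would determine the ideal balancing stepsizes separately for each increasing term. Setting $r_0/(\gamma(T+1)) = C\gamma$ gives $\gamma = \sqrt{r_0/(C(T+1))}$ and value $2\sqrt{Cr_0/(T+1)}$; setting $r_0/(\gamma(T+1)) = D\gamma^2$ gives $\gamma = (r_0/(D(T+1)))^{1/3}$ and value $2D^{1/3}(r_0/(T+1))^{2/3}$. Since either balancing stepsize may violate $\gamma \le 1/E$, I would take the conservative minimum
\[
\gamma \coloneqq \min\left\{\frac{1}{E},\; \sqrt{\frac{r_0}{C(T+1)}},\; \left(\frac{r_0}{D(T+1)}\right)^{1/3}\right\},
\]
which is automatically admissible.

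Finally, I would plug this $\gamma$ back into the three-term bound. For the decreasing term I use the elementary inequality $1/\min\{a,b,c\} = \max\{1/a,1/b,1/c\} \le 1/a + 1/b + 1/c$, which gives
\[
\frac{r_0}{\gamma(T+1)} \le \frac{E\,r_0}{T+1} + \sqrt{\frac{Cr_0}{T+1}} + D^{1/3}\left(\frac{r_0}{T+1}\right)^{2/3}.
\]
For the two increasing terms, $\gamma \le \sqrt{r_0/(C(T+1))}$ yields $C\gamma \le \sqrt{Cr_0/(T+1)}$, and $\gamma \le (r_0/(D(T+1)))^{1/3}$ yields $D\gamma^2 \le D^{1/3}(r_0/(T+1))^{2/3}$. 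Adding the three bounds produces exactly the claim, with the factor of $2$ in front of the $\sqrt{\cdot}$ and $(\cdot)^{2/3}$ terms emerging from the combination of one decreasing contribution plus one increasing contribution.

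The argument is essentially mechanical, and I do not anticipate any real obstacle beyond bookkeeping. The only subtlety is making sure to split the decreasing term $r_0/(\gamma(T+1))$ exactly once via the reciprocal-of-min trick (rather than re-using it inside the $C\gamma$ and $D\gamma^2$ estimates), which is what keeps the $E\,r_0/(T+1)$ contribution at coefficient one while giving coefficient $2$ to the remaining two terms of the stated bound.
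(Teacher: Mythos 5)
Your proof is correct, and it is the standard telescope-and-tune argument that \citet{stich2020communication} use for this lemma (the paper itself states the lemma without proof, citing their Lemma~27). The telescoping, the choice $\gamma=\min\{1/E,\sqrt{r_0/(C(T+1))},(r_0/(D(T+1)))^{1/3}\}$, and the reciprocal-of-minimum bound $1/\min\{a,b,c\}\le 1/a+1/b+1/c$ are exactly the ingredients of the cited proof, and your bookkeeping of where the factor $2$ arises is accurate.
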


\begin{remark}[Remark 28 from \citet{stich2020communication}]\label{remark:stich2020communication_lemma27} To ensure that the right-hand side of \eqref{lem:stich2020communication_lemma27_eq16} is smaller than $\varepsilon > 0$,
\begin{eqnarray*}
    T =
    \cO\left(\frac{Cr_0}{\varepsilon^2} 
    + \frac{\sqrt{D}r_0}{\varepsilon^{3/2}}
    + \frac{Er_0}{\varepsilon}\right).
\end{eqnarray*}
steps are sufficient. 
\end{remark}


\section{Missing proofs for \algname{EControl}}\label{sec:ef23_proofs}

In this section we prove \Cref{thm:ef-concurrent-scvx}, \Cref{thm:ef-concurrent-cvx} and \Cref{thm:ef-concurrent-ncvx}.

\subsection{Strongly Convex Setting}\label{sec:ef23_proofs_scvx}
 
We start with the strongly convex case setting. First, we bound the distance between two consecutive iterates.

\begin{lemma}
    \label{lem:distance-iterates}
    Let $f$ be $L$-smooth, then:
    \begin{equation}
        \label{eq:distance-iterates}
        \Eb{\norm{\xx_{t+1}-\xx_t}^2} \leq \gamma^2\left(2(1-\delta)H_t+4\eta^2 E_t + 4L F_t + \frac{2\sigma^2}{n}\right).
    \end{equation}
\end{lemma}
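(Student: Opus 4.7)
The proof is a direct algebraic unfolding of the update of Algorithm~\ref{alg:ef-concurrent} combined with Young's inequality and standard variance/smoothness bounds; no step is individually deep, and the care lies in the book-keeping of constants.

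First I would use the server update of Algorithm~\ref{alg:ef-concurrent} to write $\xx_{t+1}-\xx_t = -\gamma\hh_t - \frac{\gamma}{n}\sum_{i=1}^n\Delta_t^i$. Introducing the per-node compression residual $r_t^i \coloneqq \Delta_t^i - (\eta\ee_t^i+\gg_t^i-\hh_t^i)$, the contractive-compressor property~\eqref{def:compressor} gives $\Eb{\|r_t^i\|^2}\le (1-\delta)\Eb{\|\eta\ee_t^i+\gg_t^i-\hh_t^i\|^2}$. Averaging this identity across nodes, the $\hh_t^i$ terms cancel against the leading $\hh_t$, yielding the clean expression $\xx_{t+1}-\xx_t = -\gamma(\eta\ee_t+\gg_t+r_t)$ with $r_t \coloneqq \frac{1}{n}\sum_{i=1}^n r_t^i$, where I use the $\ee_t$, $\gg_t$, $\hh_t$ notation of~\eqref{notation:g-e-h}.

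Next I would apply Young's inequality in two rounds (peeling the compression residual off first so that it receives a factor of $2$) to split the squared norm into three contributions proportional to $\|r_t\|^2$, $\|\ee_t\|^2$, and $\|\gg_t\|^2$. Each piece is then handled by a standard tool. Jensen's inequality gives $\Eb{\|r_t\|^2}\le \frac{1}{n}\sum_{i=1}^n\Eb{\|r_t^i\|^2}\le (1-\delta)H_t$ (using the per-node compressor bound above and the definition~\eqref{notation:H-weighted}) and $\Eb{\|\ee_t\|^2}\le E_t$. For the stochastic gradient, the bias--variance decomposition together with conditional independence of the local noises across the $n$ workers yields $\Eb{\|\gg_t\|^2}\le \Eb{\|\nabla f(\xx_t)\|^2} + \sigma^2/n$, and $L$-smoothness of $f$ alone (via the descent lemma combined with $f\ge f^\star$, which does not require convexity) gives $\|\nabla f(\xx_t)\|^2\le 2LF_t$. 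Combining these three bounds recovers~\eqref{eq:distance-iterates}.

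The main obstacle, such as it is, is purely combinatorial: selecting the Young grouping so that the constants land at exactly $2(1-\delta)H_t$, $4\eta^2 E_t$, $4LF_t$, and $2\sigma^2/n$ rather than looser multiples of these. Analytically, the proof collapses to a few lines once the decomposition $\xx_{t+1}-\xx_t=-\gamma(\eta\ee_t+\gg_t+r_t)$ is in hand, and crucially it uses only smoothness of $f$, so the bound is available for later convex and nonconvex arguments alike.
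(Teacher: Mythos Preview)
Your approach is essentially identical to the paper's: the paper rewrites $\xx_{t+1}-\xx_t=-\gamma(\hh_t+\Delta_t)$, inserts $\pm(\eta\ee_t+\gg_t-\hh_t)$ to isolate the same residual you call $r_t$, and then applies Young's inequality, the compressor contraction, Jensen, the variance bound, and smoothness in the same order. One small ordering point: to land on exactly $2\sigma^2/n$ you must apply the bias--variance split to $\norm{\eta\ee_t+\gg_t}^2$ \emph{before} the second Young's inequality (using that $\ee_t$ is measurable with respect to the past, so $\Eb{\norm{\eta\ee_t+\gg_t}^2}=\Eb{\norm{\eta\ee_t+\nabla f(\xx_t)}^2}+\Eb{\norm{\gg_t-\nabla f(\xx_t)}^2}$); splitting $\eta\ee_t$ from $\gg_t$ first, as your plan describes, yields $4\sigma^2/n$ instead.
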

\begin{proof}
    \begin{eqnarray*}
        \frac{1}{\gamma^2}\Eb{\norm{\xx_{t+1}-\xx_t}^2} &=& \Eb{\norm{\hh_t+\Delta_t}^2}\\
            &=& \Eb{\norm{\Delta_t -\eta\ee_t-\gg_t +\hh_t + \eta\ee_t +\gg_t}^2}\\
            &\overset{(i)}{\leq}& 
            2\Eb{\norm{\Delta_t -\eta\ee_t-\gg_t +\hh_t}^2} 
            + 2\Eb{\norm{\eta\ee_t +\gg_t}^2}\\
            &\overset{(ii)}{\leq}& 
            2\Eb{\norm{\Delta_t -\eta\ee_t-\gg_t +\hh_t}^2} 
            + 2\Eb{\norm{\eta\ee_t +\nabla f(\xx_t)}^2} 
            + \frac{2\sigma^2}{n}\\
            &\overset{(iii)}{\leq}& 
            \frac{2}{n}\sum_{i=1}^n\Eb{\norm{\Delta_t^i -\eta\ee_t^i-\gg_t^i +\hh_t^i}^2} 
            + \frac{4\eta^2}{n}\sum_{i=1}^n\Eb{\norm{\ee_t^i}^2}\\
            &&  \;+\; 
            4\Eb{\norm{\nabla f(\xx_t)}^2} 
            + \frac{2\sigma^2}{n}\\
            &\overset{(iv)}{\leq}& 
            \frac{2(1-\delta)}{n}\sum_{i=1}^n\Eb{\norm{\eta\ee_t^i+\gg_t^i-\hh_t^i}^2} 
            + \frac{4\eta^2}{n}\sum_{i=1}^n\Eb{\norm{\ee_t^i}^2}\\
            && \;+\; 
            4L\Eb{f(\xx_t) - f^\star}
            + \frac{2\sigma^2}{n}.
    \end{eqnarray*}
    where in $(i)-(iii)$ we use Young's inequality; in $(ii)$ we use Assumption~\ref{asmp:bounded_variance}, and in $(iv)$ we use the definition of the compressor, $L$-smoothness and convexity.
\end{proof}

The next lemma gives the descent of $E_t$.
\begin{lemma}\label{lem:descent-E-ef-concurrent} 
    For any $\alpha>0$ we have:
    \begin{equation}
        \label{eq:descent-E-ef-concurrent}
        E_{t+1} \leq (1+\alpha)(1-\eta)^2E_t+(1+\alpha^{-1})(1-\delta)H_t 
    \end{equation}
\end{lemma}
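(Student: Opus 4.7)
The plan is to bound $\Eb{\|\ee_{t+1}^i\|^2}$ for each $i$ separately and then average over $i$. The central observation is a clever additive decomposition that exposes both the factor $(1-\eta)$ that will multiply $\ee_t^i$ and the exact argument of the compressor whose contractive property we want to invoke.

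First, I would rewrite the update rule $\ee_{t+1}^i = \ee_t^i + \gg_t^i - \hh_t^i - \Delta_t^i$ by adding and subtracting $\eta \ee_t^i$ inside the first three terms, obtaining
\begin{equation*}
\ee_{t+1}^i \;=\; (1-\eta)\ee_t^i \;+\; \bigl[(\eta\ee_t^i + \gg_t^i - \hh_t^i) - \Delta_t^i\bigr].
\end{equation*}
The bracketed quantity is exactly the compression error of the argument fed into $\cC_\delta$, since by construction $\Delta_t^i = \cC_\delta(\eta\ee_t^i+\gg_t^i-\hh_t^i)$.

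Next, I would apply Young's inequality $\|\aa+\bb\|^2 \leq (1+\alpha)\|\aa\|^2 + (1+\alpha^{-1})\|\bb\|^2$ with $\aa=(1-\eta)\ee_t^i$ and $\bb=(\eta\ee_t^i+\gg_t^i-\hh_t^i)-\Delta_t^i$. This yields
\begin{equation*}
\|\ee_{t+1}^i\|^2 \;\leq\; (1+\alpha)(1-\eta)^2\|\ee_t^i\|^2 + (1+\alpha^{-1})\bigl\|(\eta\ee_t^i+\gg_t^i-\hh_t^i)-\Delta_t^i\bigr\|^2.
\end{equation*}
Taking conditional expectation given the history up to iteration $t$ and invoking the contractive property \eqref{def:compressor} on the second term gives
\begin{equation*}
\Eb{\|\ee_{t+1}^i\|^2} \;\leq\; (1+\alpha)(1-\eta)^2\,\Eb{\|\ee_t^i\|^2} + (1+\alpha^{-1})(1-\delta)\,\Eb{\|\eta\ee_t^i+\gg_t^i-\hh_t^i\|^2}.
\end{equation*}
Averaging over $i\in[n]$ and using the definitions of $E_t$ and $H_t$ in \eqref{notation:F-E} and \eqref{notation:H-weighted} completes the proof.

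I do not anticipate any real obstacle; the result is essentially an algebraic identity plus one application of Young's inequality and the defining property of a contractive compressor. The only subtlety worth flagging is the decomposition step itself—introducing the factor $(1-\eta)$ is what lets the recursion contract when $\eta$ is chosen proportional to $\delta$ as in the main theorems, and it is precisely this bookkeeping that the $\eta$-parameter was designed to enable.
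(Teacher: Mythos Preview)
Your proposal is correct and matches the paper's proof essentially step for step: the same additive decomposition $\ee_{t+1}^i=(1-\eta)\ee_t^i+[(\eta\ee_t^i+\gg_t^i-\hh_t^i)-\Delta_t^i]$, followed by Young's inequality and the contractive compressor bound. The only cosmetic difference is that the paper works directly with the averaged quantities while you argue per node and then average.
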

\begin{proof}
    \begin{eqnarray*}
        E_{t+1} 
        &=& \frac{1}{n}\sum_{i=1}^n\Eb{\norm{\ee_t^i+\gg_t^i-\hh_t^i-\Delta_t^i}^2}\\
        &\overset{(i)}{\leq}& 
        (1+\alpha^{-1})\frac{1}{n}\sum_{i=1}^n\Eb{\norm{\eta\ee_t^i+\gg_t^i-\hh_t^i-\Delta_t^i}^2}
        + (1+\alpha)(1-\eta)^2E_t\\
        &\overset{(ii)}{\leq}& 
        (1+\alpha^{-1})(1-\delta)\frac{1}{n}\sum_{i=1}^{n}\Eb{\norm{\eta \ee_t^i +\gg_t^i-\hh_t^i}^2}
        + (1+\alpha)(1-\eta)^2E_t\\
        &=& 
        (1+\alpha^{-1})(1-\delta)H_t
        + (1+\alpha)(1-\eta)^2E_t.
    \end{eqnarray*}
    where in $(i)$ we use Young's inequality, and in $(ii)$ we use the definition of the compressor.
\end{proof}

Now we give the descent of $H_t$. 
\begin{lemma}
    \label{lem:descent-H-ef-concurrent} 
    Let $f$ be $L$-smooth and each $f_i$ be $L_i$-smooth. Let  $\eta=\frac{\delta}{4k}$ for some $k\geq 1$ and $\gamma\leq \frac{\delta}{32\sqrt{2}\wtilde L}$. Then we have:
    \begin{eqnarray}\label{eq:descent-H-ef-concurrent}
        H_{t+1} &\leq& 
        \left( 1-\frac{\delta}{32}\right)H_t+\left(\frac{8\delta^3}{k^44^4} 
        + \frac{128\wtilde{L}^2\gamma^2\delta}{k^24^2}\right)E_t 
        + \frac{128\wtilde L^2L\gamma^2}{\delta}F_t\notag \\
        && \;+\; \frac{64}{\delta}\left(1+\frac{\wtilde L^2\gamma^2}{n}\right)\sigma^2.
    \end{eqnarray}
\end{lemma}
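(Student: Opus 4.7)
Let me denote $a_t^i \coloneqq \eta \ee_t^i + \gg_t^i - \hh_t^i$ so that $H_t = \frac{1}{n}\sum_{i=1}^n \Eb{\norm{a_t^i}^2}$ and $\Delta_t^i = \cC_\delta(a_t^i)$. The plan is to derive a one-step recursion for $a_{t+1}^i$ in terms of $a_t^i$, exploit the contractivity of $\cC_\delta$ on the $a_t^i$ part, and control the remaining terms via smoothness and bounded variance.

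First I will substitute the updates $\ee_{t+1}^i = \ee_t^i + \gg_t^i - \hh_t^i - \Delta_t^i$ and $\hh_{t+1}^i = \hh_t^i + \Delta_t^i$ into the definition of $a_{t+1}^i$, add and subtract $\gg_t^i$, and use $\gg_t^i - \hh_t^i = a_t^i - \eta\ee_t^i$, to obtain the key identity
\[
a_{t+1}^i \;=\; (1+\eta)\bigl(a_t^i - \Delta_t^i\bigr) \;-\; \eta^2 \ee_t^i \;+\; (\gg_{t+1}^i - \gg_t^i).
\]
Next, I will apply Young's inequality with parameter $\beta = \Theta(\delta)$ to split $\norm{a_{t+1}^i}^2$ into a ``contractive'' part $(1+\beta)(1+\eta)^2\norm{a_t^i-\Delta_t^i}^2$ and a ``perturbation'' part $(1+\beta^{-1})\norm{\eta^2\ee_t^i - (\gg_{t+1}^i-\gg_t^i)}^2$, and then apply Young's inequality once more to the second piece to separate the $\eta^2 \ee_t^i$ and $\gg_{t+1}^i-\gg_t^i$ contributions.

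For the contractive part, I will take expectation conditioned on $a_t^i$ and invoke the compressor property~\eqref{def:compressor} to get $\Eb{\norm{a_t^i - \Delta_t^i}^2} \leq (1-\delta)\norm{a_t^i}^2$. For the perturbation part, the $\eta^2\ee_t^i$ term contributes $O(\eta^4) E_t$. For the $\gg_{t+1}^i-\gg_t^i$ term I will split it into a deterministic gradient difference $\nabla f_i(\xx_{t+1})-\nabla f_i(\xx_t)$, bounded by $L_i^2\norm{\xx_{t+1}-\xx_t}^2$ via $L_i$-smoothness, plus a stochastic noise difference, bounded by $4\sigma^2$ using Assumption~\eqref{asmp:bounded_variance} and the independence of fresh samples at each round. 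Averaging over $i$ converts $\frac{1}{n}\sum L_i^2$ into $\wtilde L^2$. The distance $\Eb{\norm{\xx_{t+1}-\xx_t}^2}$ is then fed in directly from \Cref{lem:distance-iterates}, which expresses it in terms of $H_t, E_t, F_t$, and $\sigma^2/n$; in particular a new $H_t$ term appears on the right-hand side.

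The main obstacle, and the only place where the numerical constants are delicate, is to choose $\beta$ so that after all these bounds the coefficient of $H_t$ on the right-hand side is at most $1 - \delta/32$. Concretely, the coefficient is
\[
(1+\beta)(1+\eta)^2(1-\delta) \;+\; (1+\beta^{-1})\,O(\wtilde L^2 \gamma^2 (1-\delta)),
\]
and I will pick $\beta$ of order $\delta$. Using $\eta = \delta/(4k) \leq \delta/4$ controls $(1+\eta)^2(1-\delta) \le 1 - \delta/4$, and the stepsize condition $\gamma \leq \delta/(32\sqrt 2\,\wtilde L)$ ensures that $(1+\beta^{-1}) \cdot O(\wtilde L^2\gamma^2) = O(\delta)$ is small enough that both contributions to the coefficient of $H_t$ together are below $1-\delta/32$. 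The coefficients of $E_t$, $F_t$, and $\sigma^2$ then read off directly: they are $(1+\beta^{-1})$ times $O(\eta^4 + \wtilde L^2\gamma^2\eta^2)$, $O(\wtilde L^2 L \gamma^2)$, and $O(1 + \wtilde L^2\gamma^2/n)$ respectively, which after substituting $\eta = \delta/(4k)$ and $1+\beta^{-1} = O(1/\delta)$ match the claimed bound.
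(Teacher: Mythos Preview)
Your proposal is correct and follows essentially the same approach as the paper. The only cosmetic difference is the order in which the two Young splits are applied: the paper first separates $\gg_{t+1}^i-\gg_t^i$ (parameter $\beta_1$) and then peels off $\eta^2\ee_t^i$ from the contractive block (parameter $\beta_2$), whereas you group $-\eta^2\ee_t^i+(\gg_{t+1}^i-\gg_t^i)$ together first and split inside afterward; both routes land on the same coefficients and the paper likewise chooses $\beta_1,\beta_2=\Theta(\delta)$ and invokes \Cref{lem:distance-iterates} exactly as you describe.
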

\begin{proof} We unroll the definition of $H_{t+1}$
    \begin{eqnarray}
        H_{t+1} 
        &=& 
        \frac{1}{n}\sum_{i=1}^n\Eb{\norm{\eta\ee_{t+1}^i+\gg_{t+1}^i-\hh_{t+1}^i}^2}\notag\\
        &=& 
        \frac{1}{n}\sum_{i=1}^n\Eb{\norm{\eta\ee_t^i + \eta\gg_t^i - \eta\hh_t^i - \eta\Delta_t^i - \hh_t^i - \Delta_t^i + \gg_{t+1}^i}^2}\notag\\
        &\overset{(i)}{\leq}& 
        (1+\beta_1)\frac{1}{n}\sum_{i=1}^n\Eb{\norm{\eta\ee_t^i + (1+\eta)\gg_t^i - (1+\eta)\hh_t^i - (1+\eta)\Delta_t^i}^2}\notag\\
        && \;+\; 
        (1+\beta_1^{-1})\frac{1}{n}\sum_{i=1}^n\Eb{\norm{\gg_{t+1}^i - \gg_t^i}^2}\notag\\
        &\overset{(ii)}{\leq}& 
        (1+\beta_1)\frac{1}{n}\sum_{i=1}^n\Eb{\norm{\eta\ee_t^i +(1+\eta)\gg_t^i-(1+\eta)\hh_t^i-(1+\eta)\Delta_t^i}^2}\notag\\
        && \;+\; 
        2(1+\beta_1^{-1})\frac{1}{n}\sum_{i=1}^n\Eb{\norm{\nabla f_i(\xx_{t+1})-\nabla f_i(\xx_t)}^2}
        + 4(1+\beta_1^{-1})\sigma^2\notag\\
        &\overset{(iii)}{\leq}& 
        (1+\beta_1)\frac{1}{n}\sum_{i=1}^n\Eb{\norm{\eta\ee_t^i +(1+\eta)\gg_t^i-(1+\eta)\hh_t^i-(1+\eta)\Delta_t^i}^2}\notag\\
        && \;+\; 
        2(1+\beta_1^{-1})\wtilde L^2\Eb{\norm{\xx_{t+1}-\xx_t}^2}
        + 4(1+\beta_1^{-1})\sigma^2,\label{lem:descent-H-ef-concurrent_eq18}
    \end{eqnarray}
    where in $(i)$ we use Young's inequality, in $(ii)$ we use assumption~\ref{asmp:bounded_variance}, and in $(iii)$ we use smoothness of $f_i$. Now we consider the first term in the above:
    \begin{eqnarray*}
        &&\frac{1}{n}\sum_{i=1}^n\Eb{\norm{\eta\ee_t^i +(1+\eta)\gg_t^i-(1+\eta)\hh_t^i-(1+\eta)\Delta_t^i}^2}\\
        &\overset{(i)}{\leq}& (1+\beta_2)(1+\eta)^2\frac{1}{n}\sum_{i=1}^n\Eb{\norm{\eta\ee_t^i+\gg_t^i-\hh_t^i-\Delta_t^i}^2} 
        +  (1+\beta_2^{-1})\eta^4E_t\\
        &\overset{(ii)}{\leq}& (1+\beta_2)(1+\eta)^2(1-\delta)H_t+(1+\beta_2^{-1})\eta^4E_t,
    \end{eqnarray*}
    where in $(i)$ we use Young's inequality, and in  $(ii)$ we use the definition of the compressor. Putting it back in~\eqref{lem:descent-H-ef-concurrent_eq18}, we have:
    \begin{align*}
        H_{t+1} &\leq (1+\beta_1)(1+\beta_2)(1+\eta)^2(1-\delta)H_t+(1+\beta_1)(1+\beta_2^{-1})\eta^4E_t\\
            &\quad +2(1+\beta_1^{-1})\wtilde L^2\Eb{\norm{\xx_{t+1}-\xx_t}^2}+4(1+\beta_1^{-1})\sigma^2
    \end{align*}
    By Lemma~\ref{lem:distance-iterates}, we can substitute $\Eb{\norm{\xx_{t+1}-\xx_t}^2}$ and get:
    \begin{eqnarray*}
        H_{t+1} 
        &\leq& (1+\beta_1)(1+\beta_2)(1+\eta)^2(1-\delta)H_t
        + (1+\beta_1)(1+\beta_2^{-1})\eta^4E_t\\
        && \;+\; 
        2(1+\beta_1^{-1})\wtilde L^2\gamma^2\left(2(1-\delta)H_t+4\eta^2 E_t + 4L F_t + \frac{2\sigma^2}{n}\right)\\
        && \;+\; 
        4(1+\beta_1^{-1})\sigma^2\\
        &=& \left[(1+\beta_1)(1+\beta_2)(1+\eta)^2(1-\delta) + 4(1+\beta_1^{-1})\wtilde L^2\gamma^2  \right]H_t \\ 
        && \;+\; \left[(1+\beta_1)(1+\beta_2^{-1})\eta^4 +  8(1+\beta_1^{-1})\wtilde L^2\gamma^2 \eta^2\right] E_t \\
        && \;+\; 
        8(1+\beta_1^{-1})\wtilde L^2L\gamma^2F_t
        + \left[ 4(1+\beta_1^{-1}) + \frac{4(1+\beta_1^{-1})\wtilde L^2\gamma^2}{n} \right]\sigma^2.
    \end{eqnarray*}
    Since $\eta = \frac{\delta}{4k}$ for some $k\geq 1$, we must have:
    \[
        (1+\eta)^2(1-\delta) \leq 1-\frac{\delta}{4}.
    \]
    Let us choose $\beta_1\coloneqq \frac{\delta}{16-2\delta}$ and $\beta_2\coloneqq \frac{\delta}{8-2\delta}$, so we get:
    \begin{align*}
        H_{t+1} &\leq 
            \left(1-\frac{\delta}{16} + \frac{64\wtilde L^2\gamma^2}{\delta}\right)H_t
            + \left(\frac{8\eta^4}{\delta} + \frac{128\wtilde L^2\gamma^2\eta^2}{\delta}\right)E_t\\
            &\quad 
            + \frac{128\wtilde L^2L\gamma^2}{\delta}F_t 
            + \frac{64}{\delta}\left(1 + \frac{\wtilde L^2\gamma^2}{n}\right)\sigma^2\\
            &=
            \left(1 - \frac{\delta}{16} + \frac{64\wtilde L^2\gamma^2}{\delta}\right)H_t 
            + \left(\frac{8\delta^3}{k^44^4} + \frac{128\wtilde{L}^2\gamma^2\delta}{k^24^2}\right)E_t \\
            &\quad 
            + \frac{128\wtilde{L}^2L\gamma^2}{\delta}F_t 
            + \frac{64}{\delta}\left(1 + \frac{\wtilde{L}^2\gamma^2}{n}\right)\sigma^2.
    \end{align*}
    If $\gamma \leq \frac{\delta}{32\sqrt{2}\wtilde L}$, then:
    \begin{align*}
        H_{t+1} &\leq 
        \left(1 -\frac{\delta}{32}\right)H_t
        + \left(\frac{8\delta^3}{k^44^4} + \frac{128\wtilde{L}^2\gamma^2\delta}{k^24^2}\right)E_t \\
        &\quad 
        + \frac{128\wtilde L^2L\gamma^2}{\delta}F_t 
        + \frac{64}{\delta}\left(1+\frac{\wtilde L^2\gamma^2}{n}\right)\sigma^2
    \end{align*}
\end{proof}

Finally, we consider the Lyapunov function $\Psi \coloneqq X_t + a H_t + b E_t$ where constants $a$ and $b$ are set as follows: $b\coloneqq\frac{48kL\gamma^3}{\delta}$, $a \coloneqq\frac{512kb}{\delta^2}$, where $k$ will be set later.

\begin{lemma}\label{lem:descent-Psi-concurrent}
    Let $f$ be $L$-smooth and $\mu$-strongly quasi-convex around $\xx^\star$, and each $f_i$ be $L_i$-smooth. Let $\eta=\frac{\delta}{400}$, and $\gamma\leq \frac{\delta}{3200\sqrt{2}\wtilde L}$, then we have
    \begin{equation}
        \label{eq:descent-Psi-concurrent}
        \Psi_{t+1} 
        \leq \left(1-\min\left\{\frac{\gamma\mu}{2}, \frac{\delta}{8850}\right\}\right)\Psi_t 
        - \frac{\gamma}{4}F_t+\gamma^2\frac{\sigma^2}{n}
        + \gamma^3\frac{10^{11}L\sigma^2}{\delta^4}.        
    \end{equation}
\end{lemma}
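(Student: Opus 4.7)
The plan is to prove the descent inequality on $\Psi_t = X_t + aH_t + bE_t$ by linearly combining the three descent lemmas already established: \Cref{lem:descent-X} for $X_t$, \Cref{lem:descent-E-ef-concurrent} for $E_t$ (with a parameter $\alpha$ to be chosen), and \Cref{lem:descent-H-ef-concurrent} for $H_t$ (which requires $\eta = \delta/(4k)$ and $\gamma \leq \delta/(32\sqrt{2}\wtilde L)$). Substituting $\eta = \delta/400$ fixes $k=100$, and the values $b = 48kL\gamma^3/\delta$ and $a = 512kb/\delta^2$ are engineered so that the positive cross-terms created on the right-hand side are all absorbed by a corresponding negative contraction term on the left.

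The key verifications I would carry out, in order, are the following. First, I would choose $\alpha = \eta/2$ in \Cref{lem:descent-E-ef-concurrent} so that $(1+\alpha)(1-\eta)^2 \leq 1 - \eta/2$ and $(1+\alpha^{-1})(1-\delta) \leq 2/\eta$; this converts the $E$-descent into $E_{t+1} \leq (1-\eta/2)E_t + (2/\eta)H_t$. Second, I would verify the $F_t$-coefficient: the $X$-descent contributes $-\gamma F_t/2$, and the $H$-descent contributes $+a \cdot 128\wtilde L^2 L \gamma^2/\delta \cdot F_t$; because $a\wtilde L^2\gamma^2/\delta$ is $\cO(\gamma^3/\delta^6) \cdot L$, for $\gamma \leq C\delta/\wtilde L$ with $C$ small enough the second term is bounded by $\gamma F_t/4$, leaving the desired $-\gamma F_t/4$ term. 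Third, I would verify the $H_t$-coefficient: on the right we pick up $a(1-\delta/32)$ from $H$-descent plus $b \cdot (2/\eta) = 800b/\delta$ from $E$-descent; I need this to be $\leq a(1 - \delta/c)$ for some $c$, which amounts to $800b/\delta \leq a\delta/64$, i.e.\ $a \geq 51200 b/\delta^2$, matching (up to a constant) the prescribed $a = 512kb/\delta^2 = 51200 b/\delta^2$. Fourth, I would verify the $E_t$-coefficient: on the right we get $3L\gamma^3$ from $X$-descent, $a \cdot (8\delta^3/k^4 4^4 + 128\wtilde L^2\gamma^2 \delta/(k^2 4^2))$ from $H$-descent, and $b(1-\eta/2)$ from $E$-descent; for the stricter step-size $\gamma \leq \delta/(3200\sqrt 2\, \wtilde L)$ each piece can be controlled and the sum is $\leq b(1 - \eta/4)$ or similar, yielding a contraction at rate $\eta/4 = \delta/1600$.

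Once each block contracts, the overall contraction factor is $\min\{\gamma\mu/2,\ \delta/8850\}$, which should come out of careful bookkeeping of the constants (the $1/8850$ arises as the minimum of the slack left in the $H$- and $E$-block contractions). Finally I would collect the noise terms: $X$-descent gives $\gamma^2\sigma^2/n$ directly, and the $H$-descent contributes $a \cdot (64/\delta)(1 + \wtilde L^2\gamma^2/n)\sigma^2$; substituting $a = \Theta(kL\gamma^3/\delta^3) = \Theta(L\gamma^3/\delta^3)$ (up to the constant $k=100$) and bounding $1 + \wtilde L^2\gamma^2/n \leq 2$ for the admissible $\gamma$ produces a bound of the form $\gamma^3 \cdot 10^{11} L\sigma^2/\delta^4$, as claimed.

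The main obstacle, and where the calculation really needs to be done rather than just sketched, is the simultaneous tuning of $\alpha$, $\beta_1$, $\beta_2$, $a$, $b$, $\eta$, and the step-size bound so that \emph{all three} block-contractions survive after coupling. In particular, the $E$- and $H$-blocks are intertwined (each contributes to the other's right-hand side), so one must be careful that the circular feedback $E \to H \to E$ does not exceed one; this is precisely what forces the specific ratio $a/b = 512k/\delta^2$ and the tightened step-size $\gamma \leq \delta/(3200\sqrt 2\,\wtilde L)$. Once those are fixed, the remaining estimates are routine algebra, and the stated inequality \eqref{eq:descent-Psi-concurrent} follows.
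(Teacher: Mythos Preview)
Your proposal is correct and follows essentially the same approach as the paper's proof: combine the three descent lemmas for $X_t$, $H_t$, $E_t$, fix $k=100$ from $\eta=\delta/(4k)=\delta/400$, choose $\alpha$ in the $E$-recursion to get $E_{t+1}\le(1-\delta/(8k))E_t+(8k/\delta)H_t$ (equivalent to your $\alpha=\eta/2$ choice), then verify that with $b=48kL\gamma^3/\delta$, $a=512kb/\delta^2$, and the tightened step-size $\gamma\le\delta/(32\sqrt{2}k\wtilde L)$ all cross-terms are absorbed; the paper's constant $\delta/8850$ is simply a loose bound on the $E$-block contraction rate you compute as roughly $\delta/1600$.
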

\begin{proof}
With $\eta=\frac{\delta}{k(4-2\delta)}$ we set $\alpha=\frac{\delta}{8k-2\delta}$ in~\Cref{lem:descent-E-ef-concurrent}, and get
    \[
        E_{t+1} \leq \left(1-\frac{\delta}{8k}\right)E_t + \frac{8k}{\delta}H_t
    \]
    We can put the link between the real and virtual iterates ~\eqref{eq:link_xt_xt_tilde} and the descent \cref{lem:descent-X} for $X_t$ together and get
    \begin{eqnarray*}
        \Psi_{t+1} &\leq& 
        \left(1-\frac{\gamma\mu}{2}\right)X_t
        - \frac{\gamma}{2}F_t
        + \frac{\gamma^2}{n}\sigma^2 
        + 3L\gamma^3E_t \\
        && 
           \;+\; a\left[\left( 1-\frac{\delta}{32} \right)H_t
                    + \left(\frac{8\delta^3}{k^44^4} 
                    + \frac{128\wtilde{L}^2\gamma^2\delta}{k^24^2}\right)\right.E_t 
                    + \frac{128\wtilde{L}^2L\gamma^2}{\delta}F_t\\ 
                    && \qquad \;+\; \left. \frac{64}{\delta}\left(1+\frac{\wtilde L^2\gamma^2}{n}\right)\sigma^2\right] 
                    + b\left[\left(1-\frac{\delta}{8k}\right)E_t 
                        + \frac{8k}{\delta}H_t\right].
    \end{eqnarray*}
    Rearranging the terms we continue as follows
    \begin{eqnarray*}
    \Psi_{t+1}
        &\le& 
        \left(1-\frac{\gamma\mu}{2}\right)X_t 
        + \left(1-\frac{\delta}{32} + \frac{8kb}{\delta a}\right)aH_t\\
        && \;+\; \left( 1-\frac{\delta}{8k} +\frac{3L\gamma^3}{b} + \frac{a}{b} \left(\frac{8\delta^3}{k^44^4} 
            + \frac{128\wtilde{L}^2\gamma^2\delta}{k^24^2}\right)\right) bE_t\\
        && \;-\; \frac{\gamma}{2}F_t 
        + \frac{128\wtilde L^2L\gamma^2a}{\delta}F_t
        + \gamma^2 \frac{\sigma^2}{n} 
        + \frac{64a}{\delta}\left(1
                                + \frac{\wtilde L^2\gamma^2}{n}\right)\sigma^2.
    \end{eqnarray*}
    Now we set $\gamma\leq \frac{\delta}{32\sqrt{2}k\wtilde L}$, and then we get:
    \begin{align*}
        \Psi_{t+1} 
        &\leq \left(1-\frac{\gamma\mu}{2}\right)X_t +\left(1-\frac{\delta}{32} + \frac{8kb}{\delta a}\right)aH_t 
        + \left( 1-\frac{\delta}{8k} + \frac{3L\gamma^3}{b} + \frac{\delta^3a}{16k^4b} \right) bE_t\\
        &\quad -\frac{\gamma}{2}F_t + \frac{128\wtilde L^2L\gamma^2a}{\delta}F_t 
            + \gamma^2 \frac{\sigma^2}{n} 
            + \frac{64a}{\delta}\left(1+\frac{\wtilde L^2\gamma^2}{n}\right)\sigma^2.
    \end{align*}
    Let $b=\frac{48kL\gamma^3}{\delta}$ and $a = \frac{512kb}{\delta^2}$, then for the coefficient next to $H_t$ have
    \[
        1-\frac{\delta}{32} + \frac{8kb}{\delta a} \leq 1-\frac{\delta}{64}.
    \]
    For the coefficient next to $E_t$ term we have
    \[
        1-\frac{\delta}{8k} +\frac{3L\gamma^3}{b} + \frac{\delta^3a}{16k^4b} \leq 1-\frac{\delta}{16k}+\frac{32\delta}{k^3}.
    \]
    For the coefficient next to $F_t$ term we have
    \[
        -\frac{\gamma}{2}+\frac{128\wtilde L^2L\gamma^2a}{\delta}\leq -\frac{\gamma}{2} 
        + \frac{3\gamma}{4k^2}.
    \]
    Finally, for the coefficient next to $\sigma^2$ term we have
    \[
        \frac{64a}{\delta}\left(1+\frac{\wtilde L^2\gamma^2}{n}\right) \leq \gamma^3 \frac{10^7k^2L}{\delta^4}.
    \]
    Therefore, combining all together and setting $k=100$, we get:
    \begin{align*}
        \Psi_{t+1} 
        &\leq 
        \left(1-\frac{\gamma\mu}{2}\right)X_t 
        + \left(1-\frac{\delta}{64} \right)aH_t
        + \left(1-\frac{\delta}{8850}\right)bE_t\\
        &\quad 
        - \frac{\gamma}{4}F_t+\gamma^2\frac{\sigma^2}{n}
        + \gamma^3\frac{10^{11}L\sigma^2}{\delta^4}\\
        &\leq \left(1-\min\left\{\frac{\gamma\mu}{2}, \frac{\delta}{8850}\right\}\right)\Psi_t 
        - \frac{\gamma}{4}F_t+\gamma^2\frac{\sigma^2}{n}
        + \gamma^3\frac{10^{11}L\sigma^2}{\delta^4}. \qedhere
    \end{align*}
\end{proof}

Now we give the precise statement of \Cref{thm:ef-concurrent-scvx}
\begin{theorem}
    \label{thm:ef-concurrent-scvx-precise}
    Let $f$ be $\mu$-strongly quasi-convex around $\xx^\star$ and $L$-smooth. Let each $f_i$ be $L_i$-smooth. Then for $\eta=\frac{\delta}{400}$,
    there exists a $\gamma \leq \frac{\delta}{3200\sqrt{2}\wtilde L}$
    such that after at most 
    \[
        T = \wtilde\cO\left(\frac{\sigma^2}{\mu n\varepsilon}
        + \frac{\sqrt{L}\sigma}{\mu\delta^2 \varepsilon^{1/2}} 
        + \frac{\wtilde L}{\mu\delta} \right)
    \]
    iterations of \Cref{alg:ef-concurrent} it holds $\Eb{f(\xx_{\text{out}})-f^\star}\leq \epsilon$, where $\xx_{\text{out}}$ is chosen randomly from $\xx_t\in \left\{\xx_0,\dots,\xx_{T}\right\}$ with probabilities proportional to $(1-\frac{\gamma\mu}{2})^{-(t+1)}$.
\end{theorem}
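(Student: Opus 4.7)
\textbf{Proof plan for Theorem~\ref{thm:ef-concurrent-scvx-precise}.}

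The plan is to package the one-step Lyapunov descent in Lemma~\ref{lem:descent-Psi-concurrent} with the generic stepsize tuning lemma of \citet{stich2020communication} (Lemma~\ref{lem:stich2020communication_lemma25}), and then to convert the resulting weighted sum bound into a statement about a randomly sampled iterate. First, I would verify that the parameter choices in the theorem meet the hypotheses of Lemma~\ref{lem:descent-Psi-concurrent}: the choice $\eta = \delta/400$ corresponds exactly to $k = 100$ in the proof of that lemma, and the stepsize condition $\gamma \le \delta/(3200\sqrt{2}\wtilde L)$ matches the bound $\gamma \le \delta/(32\sqrt{2}k\wtilde L)$ required there. This produces the recursion
\begin{equation*}
\Psi_{t+1} \leq \left(1 - \min\left\{\tfrac{\gamma\mu}{2},\; \tfrac{\delta}{8850}\right\}\right)\Psi_t - \tfrac{\gamma}{4}F_t + \tfrac{\gamma^2\sigma^2}{n} + \tfrac{10^{11} L\sigma^2\gamma^3}{\delta^4}.
\end{equation*}

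Next, I would match this recursion term-by-term to the template of Lemma~\ref{lem:stich2020communication_lemma25}, identifying $r_t = \Psi_t$, $s_t = F_t$, and the abstract constants $A = \mu/2$, $F = \delta/8850$, $B = 1/4$, $C = \sigma^2/n$, $D = 10^{11}L\sigma^2/\delta^4$, and $E = 3200\sqrt{2}\,\wtilde L/\delta$ (the reciprocal of the allowed stepsize). Remark~\ref{remark:stich2020communication_lemma25} then immediately yields
\begin{equation*}
T = \wtilde{\cO}\!\left(\frac{C}{A\varepsilon} + \frac{\sqrt{D}}{A\sqrt{\varepsilon}} + \frac{1}{F} + \frac{E}{A}\right)
= \wtilde{\cO}\!\left(\frac{\sigma^2}{\mu n\varepsilon} + \frac{\sqrt{L}\,\sigma}{\mu\delta^2\sqrt{\varepsilon}} + \frac{1}{\delta} + \frac{\wtilde L}{\mu\delta}\right),
\end{equation*}
which matches the claimed complexity after absorbing the $1/\delta$ term into $\wtilde L/(\mu\delta)$ (using $\mu \le L \le \wtilde L$).

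Finally, the tuning lemma produces a bound on $(1/W_T)\sum_{t=0}^T w_t F_t$ with $w_t = (1-\gamma\mu/2)^{-(t+1)}$. Sampling $\xx_{\text{out}}$ from $\{\xx_0,\dots,\xx_T\}$ with probabilities $w_t/W_T$ and using $\Psi_t \ge 0$ together with $\Psi_0 = X_0$ (since $\ee_0^i = 0$ and $\hh_0^i = \gg_0^i$ would need $H_0$ handled through an $\cO(\sigma^2)$ additive constant absorbed into $\wtilde\cO$) gives $\Eb{f(\xx_{\text{out}}) - f^\star} \le \varepsilon$ for $T$ as above. The real obstacle in this argument is not the final summation step, which is essentially mechanical, but the derivation of Lemma~\ref{lem:descent-Psi-concurrent} itself: one must balance three coupled recursions in $X_t$, $E_t$, and $H_t$ by carefully selecting the Lyapunov weights $a, b$ and the auxiliary Young's inequality parameters $\alpha, \beta_1, \beta_2$ together with $\eta = \delta/(k(4-2\delta))$ so that the $E_t$ and $H_t$ contributions contract at rates comparable to $\delta$ while the cross-coupling coefficients produce only a modest $1/\delta^4$ blowup in the noise term $D$. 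That $1/\delta^4$ in $D$ is precisely what drives the $\sqrt{L}\sigma/(\mu\delta^2\sqrt{\varepsilon})$ term in the final rate via the $\sqrt{D}/A$ contribution.
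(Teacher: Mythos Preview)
Your proposal is correct and follows essentially the same route as the paper: invoke Lemma~\ref{lem:descent-Psi-concurrent}, plug into Lemma~\ref{lem:stich2020communication_lemma25} and Remark~\ref{remark:stich2020communication_lemma25}, and note that the stepsize bound forces $\gamma\mu/2 \le \delta/8850$, so the weights in Lemma~\ref{lem:stich2020communication_lemma25} reduce to $(1-\gamma\mu/2)^{-(t+1)}$ as stated. One small correction: your worry about $H_0$ is unnecessary, since with $\ee_0^i=\0$ and $\hh_0^i=\gg_0^i$ one has $\eta\ee_0^i+\gg_0^i-\hh_0^i=\0$ exactly, hence $H_0=0$ and $\Psi_0=X_0=\|\xx_0-\xx^\star\|^2$ without any residual $\cO(\sigma^2)$ term to absorb.
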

\begin{proof}
The claim of theorem~\ref{thm:ef-concurrent-scvx-precise} follows from \Cref{lem:descent-Psi-concurrent}; \Cref{lem:stich2020communication_lemma25} and remark~\ref{remark:stich2020communication_lemma25} from \citep{stich2020communication}. Note that by the initialization, we have $\Psi_0 = \|\xx_0-\xx^\star\|^2.$ Also note that by the choice of parameters $\frac{\gamma\mu}{2}\leq \frac{\delta}{8850}$.
\end{proof}

\subsection{Convex Setting}\label{sec:ef23_proofs_cvx}

We switch to the convex regime. The considered setting differs from the previous one by setting $\mu=0.$ Then the claim of \Cref{lem:descent-Psi-concurrent} changes as follows.

\begin{lemma}\label{lem:descent-Psi-concurrent-cvx}
    Let $f$ be $L$-smooth and quasi convex, and each $f_i$ be $L_i$-smooth. Let $\eta=\frac{\delta}{400}$, and $\gamma\leq \frac{\delta}{3200\sqrt{2}\wtilde L}$, then we have
    \begin{equation}
        \label{eq:descent-Psi-concurrent-cvx}
        \Psi_{t+1} \leq \Psi_t -\frac{\gamma}{4}F_t+\gamma^2\frac{\sigma^2}{n}+\gamma^3\frac{10^{11}L\sigma^2}{\delta^4},
    \end{equation}
\end{lemma}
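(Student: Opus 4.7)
The plan is to mirror the proof of Lemma \ref{lem:descent-Psi-concurrent} essentially verbatim, specialized to $\mu=0$. The key observation enabling this is that among the ingredient descent lemmas used there---Lemma \ref{lem:descent-X}, Lemma \ref{lem:descent-E-ef-concurrent}, and Lemma \ref{lem:descent-H-ef-concurrent}---only Lemma \ref{lem:descent-X} has any dependence on $\mu$, and its conclusion degrades gracefully: substituting $\mu=0$ yields $X_{t+1} \le X_t - \tfrac{\gamma}{2}F_t + \tfrac{\gamma^2}{n}\sigma^2 + 3L\gamma^3 E_t$. The other two lemmas, along with Lemma \ref{lem:distance-iterates} invoked inside Lemma \ref{lem:descent-H-ef-concurrent}, make no reference to $\mu$ at all, and the convexity assumption enters only via $\|\nabla f(\xx_t)\|^2 \le 2L F_t$, which holds for any $L$-smooth function bounded below.

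With those ingredients in hand, I would instantiate the same Lyapunov function $\Psi_t = X_t + aH_t + bE_t$ with the identical constants $\eta=\delta/400$ (corresponding to $k=100$), $b = 48kL\gamma^3/\delta$, $a = 512 k b/\delta^2$, and the same stepsize constraint $\gamma \le \delta/(3200\sqrt{2}\wtilde L)$ inherited from Lemmas \ref{lem:descent-H-ef-concurrent}--\ref{lem:descent-E-ef-concurrent}. Summing the three descent inequalities with weights $(1, a, b)$ produces a recursion whose coefficients on $H_t$, $E_t$, $F_t$, and the $\sigma^2$ noise term coincide exactly with those computed in the proof of Lemma \ref{lem:descent-Psi-concurrent}, because those coefficient simplifications are purely algebraic in $\delta, k, L, \wtilde L, \gamma$ and never invoke $\mu$. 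The sole effect of $\mu=0$ is that the contraction factor $(1-\gamma\mu/2)$ in front of $X_t$ is replaced by $1$.

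To close the argument, I would bound the $H_t$ and $E_t$ coefficients $(1-\delta/64)$ and $(1-\delta/8850)$ by $1$ (valid since $\delta \in (0,1]$), which collapses the three weighted $X_t$, $H_t$, $E_t$ contributions into $\Psi_t$, leaving precisely the displayed inequality \eqref{eq:descent-Psi-concurrent-cvx}. I do not anticipate any real obstacle, since this lemma is a direct specialization of its strongly convex counterpart; the only step worth auditing carefully is that none of the coefficient simplifications inside the proof of Lemma \ref{lem:descent-Psi-concurrent} silently exploited the bound $\gamma\mu/2 \le \delta/8850$ on the contraction rate, and inspection confirms they do not.
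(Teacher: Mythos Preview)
Your proposal is correct and matches the paper's approach exactly: the paper's proof is the single sentence ``The proof immediately follows from \cref{lem:descent-Psi-concurrent} by plugging in $\mu=0$.'' Your additional audit that none of the coefficient simplifications in the proof of Lemma~\ref{lem:descent-Psi-concurrent} rely on $\mu$ is a useful sanity check beyond what the paper states.
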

\begin{proof}
    The proof immediately follows from \cref{lem:descent-Psi-concurrent} by plugging in $\mu=0.$ 
\end{proof}

Now we give the precise statement of \Cref{thm:ef-concurrent-cvx}
\begin{theorem}
    \label{thm:ef-concurrent-cvx-precise}
    Let $f$ be quasi-convex around $\xx^\star$ and $L$-smooth. Let each $f_i$ be $L_i$-smooth. Then for $\eta=\frac{\delta}{400}$,
    there exists a $\gamma \leq \frac{\delta}{3200\sqrt{2}\wtilde L}$
    such that after at most 
    \[
        T= \cO\left(\frac{R_0\sigma^2}{n\epsilon^2} 
        + \frac{\sqrt{L}R_0\sigma}{\delta^2\epsilon^{3/2}}
        + \frac{\wtilde{L}R_0}{\delta\epsilon}\right)
    \]
    iterations of \Cref{alg:ef-concurrent} it holds $\Eb{f(\xx_{\text{out}})-f^\star}\leq \epsilon$, where $R_0 \coloneqq \|\xx_0-\xx^\star\|^2$ and  $\xx_{\text{out}}$ is chosen uniformly at random from $\xx_t\in \left\{\xx_0,\dots,\xx_{T}\right\}$.
\end{theorem}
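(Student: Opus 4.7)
The plan is to reduce the convex case directly to the already-established recursion for the Lyapunov function $\Psi_t = X_t + a H_t + b E_t$, and then invoke the summation lemma of \citet{stich2020communication} (\Cref{lem:stich2020communication_lemma27} together with \Cref{remark:stich2020communication_lemma27}) that converts such one-step descent inequalities into an iteration-complexity bound.

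Concretely, I would first verify the initialization:\ by the initialization prescribed in \Cref{alg:ef-concurrent} one has $\ee_0^i = \mathbf{0}$ and $\hh_0^i = \gg_0^i$, so $E_0 = 0$ and $H_0 = 0$; together with $\wtilde{\xx}_0 = \xx_0$ this yields $\Psi_0 = \|\xx_0 - \xx^\star\|^2 = R_0$. Second, I would apply \Cref{lem:descent-Psi-concurrent-cvx} with $\eta = \delta/400$ and any $\gamma \le \delta/(3200\sqrt{2}\wtilde L)$; this gives the recursion
\begin{equation*}
    \Psi_{t+1} \le \Psi_t - \tfrac{\gamma}{4} F_t + \gamma^2 \tfrac{\sigma^2}{n} + \gamma^3 \tfrac{10^{11} L \sigma^2}{\delta^4}.
\end{equation*}
This is exactly the template required by \Cref{lem:stich2020communication_lemma27} with the identifications $r_t = \Psi_t$, $s_t = F_t$, $B = 1/4$, $C = \sigma^2/n$, $D = 10^{11} L \sigma^2/\delta^4$, and $E = 3200\sqrt{2}\wtilde L/\delta$.

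Third, \Cref{lem:stich2020communication_lemma27} produces a tuned stepsize $\gamma \le 1/E$ such that $\tfrac{B}{T+1} \sum_{t=0}^T F_t$ is bounded by the advertised expression in $R_0$, $C$, $D$, $E$. Choosing $\xx_{\text{out}}$ uniformly at random from $\{\xx_0, \dots, \xx_T\}$ makes $\Eb{f(\xx_{\text{out}}) - f^\star} = \tfrac{1}{T+1} \sum_{t=0}^T F_t$, and \Cref{remark:stich2020communication_lemma27} converts the resulting rate into an iteration-complexity guarantee
\begin{equation*}
    T = \cO\!\left( \tfrac{C R_0}{\varepsilon^2} + \tfrac{\sqrt{D} R_0}{\varepsilon^{3/2}} + \tfrac{E R_0}{\varepsilon} \right) = \cO\!\left( \tfrac{R_0 \sigma^2}{n \varepsilon^2} + \tfrac{\sqrt{L} R_0 \sigma}{\delta^2 \varepsilon^{3/2}} + \tfrac{\wtilde L R_0}{\delta \varepsilon} \right),
\end{equation*}
matching the theorem statement.

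Since the whole strongly-convex analysis already pushed through essentially the same Lyapunov argument, there is no genuine obstacle here: the only things to check are the bookkeeping for $\Psi_0 = R_0$ and that plugging $\mu = 0$ into the strongly-quasi-convex descent yields precisely the form \Cref{lem:stich2020communication_lemma27} expects. The mildly delicate point is that the stepsize restriction $\gamma \le \delta/(3200\sqrt{2}\wtilde L)$ from \Cref{lem:descent-Psi-concurrent-cvx} must be compatible with the $\gamma \le 1/E$ constraint of \Cref{lem:stich2020communication_lemma27}; this is why we identify $E = 3200\sqrt{2}\wtilde L/\delta$, and the $E R_0/\varepsilon$ term then reproduces the expected $\wtilde L R_0/(\delta \varepsilon)$ contribution.
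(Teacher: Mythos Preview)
Your proposal is correct and follows essentially the same route as the paper: invoke \Cref{lem:descent-Psi-concurrent-cvx} (the $\mu=0$ specialization of the Lyapunov recursion), note that the initialization gives $\Psi_0 = R_0$, and then apply \Cref{lem:stich2020communication_lemma27} together with \Cref{remark:stich2020communication_lemma27}. Your explicit identification of $B,C,D,E$ and the justification that $E_0=H_0=0$ are helpful details that the paper leaves implicit, but the overall argument is the same.
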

\begin{proof}
The claim of \cref{thm:ef-concurrent-cvx-precise} follows from \cref{lem:descent-Psi-concurrent-cvx}; \cref{lem:stich2020communication_lemma27} and remark~\ref{remark:stich2020communication_lemma27} from \citep{stich2020error}. Note that by the initialization $\Psi_0 = R_0.$
\end{proof}

\subsection{Nonconvex Setting}\label{sec:ef23_proofs_ncvx}
Now we give the small modification of lemma~\ref{lem:distance-iterates} that is used in the nonconvex setting. 

\begin{lemma}\label{lem:distance-iterates-ncvx}
    Let $f$ be $L$-smooth, then:
    \begin{equation}
        \label{eq:distance-iterates-ncvx}
        \Eb{\norm{\xx_{t+1}-\xx_t}} \leq \gamma^2\left(2(1-\delta)H_t+4\eta^2 E_t + 4\Eb{\|\nabla f(\xx_t)\|^2} + \frac{2\sigma^2}{n}\right).
    \end{equation}
\end{lemma}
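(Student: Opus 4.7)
The statement to prove is the nonconvex analogue of \Cref{lem:distance-iterates}, differing only in that the $4LF_t$ term is replaced by $4\Eb{\|\nabla f(\xx_t)\|^2}$. (I read the left-hand side as $\Eb{\norm{\xx_{t+1}-\xx_t}^2}$ since otherwise the factor $\gamma^2$ on the right is dimensionally inconsistent.) Accordingly, my plan is to replay the proof of \Cref{lem:distance-iterates} verbatim up to the penultimate line and simply stop one step earlier, since the only place where convexity/smoothness was used in the strongly convex version was in the final bound $\Eb{\norm{\nabla f(\xx_t)}^2}\le 2L F_t$, which is unavailable (and unnecessary) here.

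Concretely, I would proceed as follows. First, use the server update to write $\xx_{t+1}-\xx_t=-\gamma(\hh_t+\Delta_t)$, so that $\gamma^{-2}\Eb{\norm{\xx_{t+1}-\xx_t}^2}=\Eb{\norm{\hh_t+\Delta_t}^2}$. Second, split
\[
    \hh_t+\Delta_t \;=\; \bigl(\Delta_t-\eta\ee_t-\gg_t+\hh_t\bigr) + \bigl(\eta\ee_t+\gg_t\bigr),
\]
and apply Young's inequality $\|a+b\|^2\le 2\|a\|^2+2\|b\|^2$. Third, for the stochastic term use \Cref{asmp:bounded_variance} together with $\gg_t=\tfrac{1}{n}\sum_i\gg_t^i$ to obtain $\Eb{\norm{\eta\ee_t+\gg_t}^2}\le \Eb{\norm{\eta\ee_t+\nabla f(\xx_t)}^2}+\sigma^2/n$. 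Fourth, for the compression term pass from the average to the per-client sum via Jensen, and invoke the contractive compressor property \eqref{def:compressor} conditioned on $\eta\ee_t^i+\gg_t^i-\hh_t^i$ to get
\[
    \frac{1}{n}\sum_{i=1}^n\Eb{\norm{\Delta_t^i-\eta\ee_t^i-\gg_t^i+\hh_t^i}^2}\le (1-\delta)H_t.
\]
Fifth, split $\Eb{\norm{\eta\ee_t+\nabla f(\xx_t)}^2}\le 2\eta^2\tfrac{1}{n}\sum_i\Eb{\norm{\ee_t^i}^2}+2\Eb{\norm{\nabla f(\xx_t)}^2}=2\eta^2 E_t+2\Eb{\norm{\nabla f(\xx_t)}^2}$ using Young's inequality and Jensen on the $\ee_t^i$'s.

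Collecting all four contributions gives exactly $2(1-\delta)H_t+4\eta^2 E_t+4\Eb{\norm{\nabla f(\xx_t)}^2}+2\sigma^2/n$ for $\gamma^{-2}\Eb{\norm{\xx_{t+1}-\xx_t}^2}$, which is the claim. There is no real obstacle here: the only ``choice'' in the proof of \Cref{lem:distance-iterates} that used convexity was the last inequality $\Eb{\norm{\nabla f(\xx_t)}^2}\le 2L F_t$, and by dropping it we both avoid the convexity assumption and arrive at the stated bound. Thus the proof is a one-line modification of the earlier argument.
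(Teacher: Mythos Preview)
Your proposal is correct and matches the paper's own proof essentially line by line: the paper likewise replays the chain of inequalities from \Cref{lem:distance-iterates} and simply stops at step~$(iv)$, retaining $4\Eb{\norm{\nabla f(\xx_t)}^2}$ instead of bounding it by $4LF_t$.
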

\begin{proof}
    \begin{eqnarray*}
        \frac{1}{\gamma^2}\Eb{\norm{\xx_{t+1}-\xx_t}^2} &=& \Eb{\norm{\hh_t+\Delta_t}^2}\\
            &=& \Eb{\norm{\Delta_t -\eta\ee_t-\gg_t +\hh_t + \eta\ee_t +\gg_t}^2}\\
            &\overset{(i)}{\leq}& 
            2\Eb{\norm{\Delta_t -\eta\ee_t-\gg_t +\hh_t}^2} 
            + 2\Eb{\norm{\eta\ee_t +\gg_t}^2}\\
            &\overset{(ii)}{\leq}& 
            2\Eb{\norm{\Delta_t -\eta\ee_t-\gg_t +\hh_t}^2} 
            + 2\Eb{\norm{\eta\ee_t +\nabla f(\xx_t)}^2} 
            + \frac{2\sigma^2}{n}\\
            &\overset{(iii)}{\leq}& 
            \frac{2}{n}\sum_{i=1}^n\Eb{\norm{\Delta_t^i -\eta\ee_t^i-\gg_t^i +\hh_t^i}^2} 
            + \frac{4\eta^2}{n}\sum_{i=1}^n\Eb{\norm{\ee_t^i}^2}\\
            &&  \;+\; 
            4\Eb{\norm{\nabla f(\xx_t)}^2} 
            + \frac{2\sigma^2}{n}\\
            &\overset{(iv)}{\leq}& 
            \frac{2(1-\delta)}{n}\sum_{i=1}^n\Eb{\norm{\eta\ee_t^i+\gg_t^i-\hh_t^i}^2} 
            + \frac{4\eta^2}{n}\sum_{i=1}^n\Eb{\norm{\ee_t^i}^2}\\
            && \;+\; 
            4\Eb{\norm{\nabla f(\xx_t)}^2}
            + \frac{2\sigma^2}{n}.
    \end{eqnarray*}
    where in $(i)-(iii)$ we use Young's inequality; in $(ii)$ we use Assumption~\ref{asmp:bounded_variance}, and in $(iv)$ we use the definition of the compressor. 
\end{proof}

Next, we give a simple modification of lemma~\ref{lem:descent-H-ef-concurrent} in nonconvex setting as well.
\begin{lemma}\label{lem:descent-H-ef-concurrent-ncvx} 
    Let $f$ be $L$-smooth, and each $f_i$ be $L_i$-smooth. Let  $\eta=\frac{\delta}{4k}$ for some $k\geq 1$ and $\gamma\leq \frac{\delta}{32\sqrt{2}\wtilde L}$. Then  we have
    \begin{eqnarray} \label{eq:descent-H-ef-concurrent-ncvx}
        H_{t+1} &\leq& 
        \left(1-\frac{\delta}{32} \right)H_t
        + \left(\frac{8\delta^3}{k^44^4} + \frac{128\wtilde{L}^2\gamma^2\delta}{k^24^2}\right)E_t 
        + \frac{128\wtilde L^2\gamma^2}{\delta}\Eb{\norm{\nabla f(\xx_t)}^2}\notag\\
        && \;+\; \frac{64}{\delta}\left(1+\frac{\wtilde L^2\gamma^2}{n}\right)\sigma^2.
    \end{eqnarray}
\end{lemma}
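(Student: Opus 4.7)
The plan is to mirror the proof of \Cref{lem:descent-H-ef-concurrent} almost step-by-step, with the only substantive change being the substitution of \Cref{lem:distance-iterates-ncvx} for \Cref{lem:distance-iterates} when bounding $\Eb{\|\xx_{t+1}-\xx_t\|^2}$. In the convex proof the distance bound produces a $4LF_t$ term (coming from $\|\nabla f(\xx_t)\|^2 \le 2L(f(\xx_t)-f^\star)$), whereas in the nonconvex case we simply carry $\Eb{\|\nabla f(\xx_t)\|^2}$ through, which is exactly the discrepancy between the two statements.

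First, I would expand
\[
    H_{t+1} = \frac{1}{n}\sum_{i=1}^n\Eb{\|\eta\ee_{t+1}^i+\gg_{t+1}^i-\hh_{t+1}^i\|^2},
\]
plug in the \algname{EControl} updates of $\ee_{t+1}^i$ and $\hh_{t+1}^i$, and split the resulting squared norm via Young's inequality with a parameter $\beta_1>0$ into a ``compressor+bias'' part and a ``stochastic drift'' part $\tfrac{1}{n}\sum_i \Eb{\|\gg_{t+1}^i-\gg_t^i\|^2}$. The stochastic drift is controlled by \Cref{asmp:bounded_variance} giving a $4(1+\beta_1^{-1})\sigma^2$ contribution and by $L_i$-smoothness giving a $2(1+\beta_1^{-1})\wtilde{L}^2\Eb{\|\xx_{t+1}-\xx_t\|^2}$ contribution (identical to the convex derivation).

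Next, I would apply Young's inequality a second time with parameter $\beta_2>0$ to the compressor+bias part, peeling off an $\eta^4 E_t$ term, then invoke the compressor contraction \eqref{def:compressor} to obtain $(1+\beta_2)(1+\eta)^2(1-\delta)H_t + (1+\beta_2^{-1})\eta^4 E_t$. At this point the key substitution is \Cref{lem:distance-iterates-ncvx}, which inserts $2(1-\delta)H_t$, $4\eta^2 E_t$, $4\Eb{\|\nabla f(\xx_t)\|^2}$, and $\tfrac{2\sigma^2}{n}$ instead of an $F_t$ term. Using $\eta = \tfrac{\delta}{4k}$ we have $(1+\eta)^2(1-\delta)\le 1-\tfrac{\delta}{4}$, and then choosing $\beta_1 = \tfrac{\delta}{16-2\delta}$, $\beta_2 = \tfrac{\delta}{8-2\delta}$ absorbs the $(1+\beta_i)$ factors into that contraction, yielding $1-\tfrac{\delta}{16}+\tfrac{64\wtilde{L}^2\gamma^2}{\delta}$ as the coefficient of $H_t$.

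Finally, the stepsize condition $\gamma \le \tfrac{\delta}{32\sqrt{2}\wtilde{L}}$ guarantees $\tfrac{64\wtilde{L}^2\gamma^2}{\delta} \le \tfrac{\delta}{32}$, reducing the $H_t$ coefficient to $1-\tfrac{\delta}{32}$. Collecting the remaining pieces gives $E_t$-coefficient $\tfrac{8\delta^3}{k^4 4^4}+\tfrac{128\wtilde{L}^2\gamma^2\delta}{k^2 4^2}$, gradient-norm coefficient $\tfrac{128\wtilde{L}^2\gamma^2}{\delta}$, and noise coefficient $\tfrac{64}{\delta}\bigl(1+\tfrac{\wtilde{L}^2\gamma^2}{n}\bigr)$, matching the claim. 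Since every inequality is a direct analogue of one in the convex proof and the only genuinely new estimate is \Cref{lem:distance-iterates-ncvx} (already supplied), I expect no real obstacle; the only bookkeeping risk is keeping the coefficient of the $\Eb{\|\nabla f(\xx_t)\|^2}$ term free of any extra factor of $L$, which is exactly what is gained by not invoking convexity to rewrite $\|\nabla f(\xx_t)\|^2$ in terms of $F_t$.
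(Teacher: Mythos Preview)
Your proposal is correct and matches the paper's approach exactly: the paper's proof of this lemma consists of a single sentence stating that it is ``almost exactly the same'' as the proof of \Cref{lem:descent-H-ef-concurrent}, with the only change being the substitution of \Cref{lem:distance-iterates-ncvx} for \Cref{lem:distance-iterates} when bounding $\Eb{\|\xx_{t+1}-\xx_t\|^2}$. You have correctly identified both the structure and the single modification needed.
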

\begin{proof}
    The proof is almost exactly the same to the proof of lemma~\ref{lem:descent-H-ef-concurrent} where the bound of $\Eb{\|x_{t+1} - x_t\|^2}$ by lemma~\ref{lem:distance-iterates} is replaced by lemma~\ref{lem:distance-iterates-ncvx}.
\end{proof}

Now we consider the Lyapunov function $\Psi_t\coloneqq \wtilde{F}_t + aH_t+bE_t$ where constants $a$ and $b$ are set as follows: $b\coloneqq\frac{8L^2\gamma^3}{\delta}$, $a \coloneqq\frac{512kb}{\delta^2}$, where $k$ will be set later.
\begin{lemma}
    \label{lem:descent-Psi-concurrent-ncvx}
    Let $f$ be $L$-smooth, and each $f_i$ be $L_i$-smooth. Let $\eta=\frac{\delta}{400}$ and  $\gamma\leq \frac{\delta}{3200\sqrt{2}\wtilde L}$. Then we have:
    \begin{equation}
        \label{eq:descent-Psi-concurrent-ncvx}
        \Psi_{t+1} \leq 
        \Psi_t 
        - \frac{\gamma}{8}\Eb{\norm{\nabla f(\xx_t)}^2} 
        + \gamma^2\frac{L\sigma^2}{2n} 
        + \gamma^3\frac{10^{10}L^2\sigma^2}{\delta^4}.
    \end{equation}
\end{lemma}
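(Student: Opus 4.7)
My plan is to follow the same Lyapunov-function strategy used in the strongly convex case (Lemma~\ref{lem:descent-Psi-concurrent}), with two substitutions tailored to the nonconvex regime. First, I would replace the $X_t$ descent (Lemma~\ref{lem:descent-X}) by the $\wtilde F_t$ descent (Lemma~\ref{lem:descent-Ftilde}), which produces a $-\frac{\gamma}{4}\Eb{\|\nabla f(\xx_t)\|^2}$ negative term without requiring convexity. Second, I would replace Lemma~\ref{lem:descent-H-ef-concurrent} by its nonconvex analogue Lemma~\ref{lem:descent-H-ef-concurrent-ncvx}, whose middle term features $\Eb{\|\nabla f(\xx_t)\|^2}$ instead of $LF_t$. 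These two substitutions make the algebra close on the common quantity $\|\nabla f(\xx_t)\|^2$, exactly as $F_t$ closed the strongly convex argument.

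I would then sum the three descent inequalities for $\wtilde F_{t+1}$, $aH_{t+1}$, and $bE_{t+1}$. As in the strongly convex proof, the choice $\alpha = \delta/(8k-2\delta)$ in Lemma~\ref{lem:descent-E-ef-concurrent} gives $E_{t+1}\le(1-\delta/(8k))E_t + (8k/\delta)H_t$, and I would then group the resulting bound by $H_t$, $E_t$, $\Eb{\|\nabla f(\xx_t)\|^2}$, and $\sigma^2$. The only structural novelty is that the fresh $\gamma^3L^2/2\cdot E_t$ term from Lemma~\ref{lem:descent-Ftilde} must fit into the $b$-slot, which dictates the choice $b = 8L^2\gamma^3/\delta$ in place of $48kL\gamma^3/\delta$.

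Next, I would verify that with $k=100$, $\eta=\delta/400$, $a=512kb/\delta^2$, and $\gamma\le\delta/(3200\sqrt{2}\wtilde L)$, every coefficient behaves as required. The coefficient of $H_t$ becomes $a(1-\delta/32 + 8kb/(\delta a)) = a(1-\delta/64)$, so $aH_t$ contracts. The coefficient of $E_t$ equals $\gamma^3L^2/2 + a\bigl(\tfrac{8\delta^3}{k^4 4^4} + \tfrac{128\wtilde L^2\gamma^2\delta}{k^2 4^2}\bigr) + b(1-\delta/(8k))$; since $b\delta/(8k) = L^2\gamma^3/k$, the $\gamma^3L^2/2$ term is absorbed for $k=100$, and the $a$-contributions are tamed by the stepsize bound. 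The coefficient of $\Eb{\|\nabla f(\xx_t)\|^2}$ is $-\gamma/4 + 128 a\wtilde L^2\gamma^2/\delta$, bounded above by $-\gamma/8$ thanks to the stepsize. The residual $\sigma^2$ coefficient $\gamma^2L/(2n) + (64a/\delta)(1+\wtilde L^2\gamma^2/n)$, after substituting $a$, is at most $\gamma^2L\sigma^2/(2n) + \gamma^3\cdot 10^{10}L^2\sigma^2/\delta^4$, matching the claim.

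The main obstacle is purely bookkeeping: simultaneously absorbing both the $\gamma^3L^2/2$ term from the $\wtilde F_t$ descent and the $a$-multiplied $E_t$ contribution from the $H_t$ descent into the $-b\delta/(8k)$ slack, while ensuring the stochastic-noise constant is captured as $10^{10}L^2/\delta^4$. There is no genuinely new idea relative to the strongly convex proof; the whole point of switching to the $\wtilde F_t$ potential is precisely to decouple the Lyapunov argument from convexity, at the price of losing the exponential contraction factor and keeping only the gradient-norm negative term.
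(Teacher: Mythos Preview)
Your plan matches the paper's proof almost exactly: replace Lemma~\ref{lem:descent-X} by Lemma~\ref{lem:descent-Ftilde}, replace Lemma~\ref{lem:descent-H-ef-concurrent} by Lemma~\ref{lem:descent-H-ef-concurrent-ncvx}, keep the $E_t$ recursion with $\alpha=\delta/(8k-2\delta)$, set $a=512kb/\delta^2$ and $k=100$, and check the four coefficients. There is, however, one arithmetic slip that makes a step fail as written.

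With your choice $b=8L^2\gamma^3/\delta$ you compute $b\delta/(8k)=L^2\gamma^3/k$, and then claim ``the $\gamma^3L^2/2$ term is absorbed for $k=100$.'' But $L^2\gamma^3/k = L^2\gamma^3/100$, which is \emph{smaller} than $\gamma^3L^2/2$, so the absorption fails; the $E_t$ coefficient would exceed $b$. The paper's proof actually takes $b=\tfrac{8kL^2\gamma^3}{\delta}$ (the declaration just before the lemma omits the $k$, which is a typo). With that $b$ one gets $L^2\gamma^3/(2b)=\delta/(16k)$, so the $E_t$ coefficient becomes $1-\tfrac{\delta}{8k}+\tfrac{\delta}{16k}+\tfrac{\delta^3 a}{16k^4 b}=1-\tfrac{\delta}{16k}+\tfrac{32\delta}{k^3}$, which for $k=100$ is at most $1-\tfrac{\delta}{8850}$. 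The larger $b$ also feeds into $a$, but the checks for the $H_t$, $\Eb{\|\nabla f(\xx_t)\|^2}$, and $\sigma^2$ coefficients still go through under $\gamma\le\delta/(32\sqrt{2}k\wtilde L)$, exactly as in the strongly convex argument. Once you restore the missing $k$ in $b$, your outline coincides with the paper's proof.
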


\begin{proof}
    Note that lemma~\ref{lem:descent-E-ef-concurrent} still holds in the smooth nonconvex case. Therefore, with $\eta=\frac{\delta}{4k}$, if in~\eqref{eq:descent-E-ef-concurrent} we set $\alpha = \frac{\delta}{8k-2\delta}$, then:
    \[
        E_{t+1} \leq \left(1-\frac{\delta}{8k}\right)E_t+\frac{8k}{\delta}H_t.
    \]
    Now we put all inequalities together:
    \begin{eqnarray*}
        \Psi_{t+1} 
        &\leq& 
        \wtilde{F}_t 
        - \frac{\gamma}{4}\Eb{\norm{\nabla f(\xx_t)}^2}
        + \frac{\gamma^3L^2}{2}E_t
        + \frac{\gamma^2L}{2n}\sigma^2 \\
        && \;+\;
        a\left[\left(1-\frac{\delta}{32} \right)H_t+\left(\frac{8\delta^3}{k^44^4} + \frac{128\wtilde{L}^2\gamma^2\delta}{k^24^2}\right)E_t + \frac{128\wtilde{L}^2\gamma^2}{\delta}\Eb{\norm{\nabla f(\xx_t)}^2}\right.\\
        && \qquad\qquad \;+\; \left.\frac{64}{\delta}\left(1+\frac{2\wtilde{L}^2\gamma^2}{n}\right)\sigma^2\right]\\
        && \;+\; 
        b\left[\left(1-\frac{\delta}{8k}\right)E_t + \frac{8k}{\delta}H_t\right]\\
        &=&
        \wtilde F_t 
        + \left(1-\frac{\delta}{32}+\frac{8kb}{\delta a}\right)aH_t\\
        && \;+\; 
        \left( 1-\frac{\delta}{8k} +\frac{L^2\gamma^3}{2b} + \frac{a}{b}\left(\frac{8\delta^3}{k^44^4} + \frac{128\wtilde{L}^2\gamma^2\delta}{k^24^2}\right)\right)bE_t\\
        && \;-\;
        \frac{\gamma}{4}\Eb{\norm{\nabla f(\xx_t)}^2} 
        + \frac{128\wtilde L^2\gamma^2a}{\delta}\Eb{\norm{\nabla f(\xx_t)}^2}\\
        && \;+\; 
        \gamma^2 \frac{L\sigma^2}{2n} 
        + \frac{64a}{\delta}\left(1+\frac{\wtilde L^2\gamma^2}{n}\right)\sigma^2.
    \end{eqnarray*}
    Similar as before, we now set $\gamma\leq \frac{\delta}{32\sqrt{2}k\wtilde L}$ and get:
    \begin{align*}
        \Psi_{t+1} &\leq 
        \wtilde F_t + \left(1-\frac{\delta}{32}+\frac{8kb}{\delta a}\right)aH_t 
        + \left( 1-\frac{\delta}{8k} +\frac{L^2\gamma^3}{2b} + \frac{\delta^3a}{16k^4b} \right) bE_t\\
        &\quad -\frac{\gamma}{4}\Eb{\norm{\nabla f(\xx_t)}^2} + \frac{128\wtilde L^2\gamma^2a}{\delta}\Eb{\norm{\nabla f(\xx_t)}^2}
        + \gamma^2 \frac{L}{2n}\sigma^2 
        + \frac{64a}{\delta}\left(1+\frac{\wtilde L^2\gamma^2}{n}\right)\sigma^2.
    \end{align*}
    Let $b=\frac{8kL^2\gamma^3}{\delta}$ and $a = \frac{512kb}{\delta^2}$,
    then for the coefficient next to $H_t$ term, we have
    \[
        1-\frac{\delta}{32}+\frac{8kb}{\delta a} \leq 1-\frac{\delta}{64};
    \]
    for the coefficient next to $E_t$ term, we have
    \[
        1
        - \frac{\delta}{8k} 
        + \frac{L^2\gamma^3}{2b} 
        + \frac{\delta^3a}{16k^4b} \leq 
        1 
        - \frac{\delta}{16k}
        + \frac{\delta^3a}{16k^4b} \leq 
        1 
        -\frac{\delta}{16k} 
        + \frac{32\delta}{k^3};
    \]
    for the coefficient next to $\Eb{\norm{\nabla f(\xx_t)}^2}$ term, we have
    \[
        -\frac{\gamma}{4} 
        + \frac{128\wtilde L^2\gamma^2a}{\delta} \leq 
        - \frac{\gamma}{4} + \frac{\gamma}{8k^2};
    \]
    for the coefficient next to $\sigma^2$ term, we have
    \[
        \frac{64a}{\delta}\left(1+\frac{\wtilde{L}^2\gamma^2}{n}\right) \leq 
        \gamma^3 \frac{10^6k^2L^2}{\delta^4}.
    \]
    Therefore, if we set $k=100$, we get:
    \begin{align*}
        \Psi_{t+1} &\leq \wtilde F_t 
        + \left(1-\frac{\delta}{64}\right)aH_t 
        + \left(1-\frac{\delta}{8850}\right)bE_t\\
        &\quad 
        - \frac{\gamma}{8}\Eb{\norm{\nabla f(\xx_t)}^2} 
        + \gamma^2\frac{L\sigma^2}{2n} 
        + \gamma^3\frac{10^{10}L^2\sigma^2}{\delta^4}\\
        &\leq \Psi_t 
        - \frac{\gamma}{8}\Eb{\norm{\nabla f(\xx_t)}^2}
        + \gamma^2\frac{L\sigma^2}{2n} 
        + \gamma^3\frac{10^{10}L^2\sigma^2}{\delta^4}. \qedhere
    \end{align*}
\end{proof}
Now we give the precise statement of \Cref{thm:ef-concurrent-ncvx}
\begin{theorem}
    \label{thm:ef-concurrent-ncvx-precise}
    Let $f$ be $L$-smooth, and each $f_i$ be $L_i$-smooth.  Then for $\eta=\frac{\delta}{400}$,
    there exists a $\gamma \leq \frac{\delta}{3200\sqrt{2}\wtilde L}$
    such that after at most 
    \[
        T= \cO\left( \frac{LF_0\sigma^2}{n\epsilon^2} 
        + \frac{LF_0\sigma}{\delta^2\epsilon^{3/2}}
        + \frac{\wtilde{L}F_0}{\delta\epsilon} \right)
    \]
    iterations of \Cref{alg:ef-concurrent} it holds $\Eb{\norm{\nabla f(\xx_{\text{out}})}^2}\leq \epsilon$,  where $F_0 \coloneqq f(\xx_0) - f^\star$ and $\xx_{\text{out}}$ is chosen uniformly at random from $\xx_t\in\left\{\xx_0,\dots,\xx_{T}\right\}$.
\end{theorem}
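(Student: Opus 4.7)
The plan is to follow exactly the Lyapunov-function strategy already set up in the preceding lemmas, adapted for the nonconvex descent quantity $\Eb{\|\nabla f(\xx_t)\|^2}$. First I would work with the \emph{virtual iterate} sequence $\wtilde{\xx}_t$ defined in \eqref{notation:virtual_iter}, for which \Cref{lem:descent-Ftilde} gives the one-step descent
\[
\wtilde{F}_{t+1} \le \wtilde{F}_t - \tfrac{\gamma}{4}\Eb{\|\nabla f(\xx_t)\|^2} + \tfrac{\gamma^3 L^2}{2} E_t + \tfrac{\gamma^2 L \sigma^2}{2n},
\]
which already produces the desired negative drift in $\|\nabla f(\xx_t)\|^2$ but leaves the error term $E_t$ on the right-hand side. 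To absorb it, I would use \Cref{lem:descent-E-ef-concurrent} with the choice $\alpha=\delta/(8k-2\delta)$ (matched to $\eta=\delta/(4k)$) to obtain a contractive recursion $E_{t+1}\le (1-\delta/(8k))E_t + (8k/\delta)H_t$, and then close the loop with \Cref{lem:descent-H-ef-concurrent-ncvx} for $H_t$. The only difference from the strongly-quasi-convex case is that the bound on $\Eb{\|\xx_{t+1}-\xx_t\|^2}$ (\Cref{lem:distance-iterates-ncvx}) introduces $\Eb{\|\nabla f(\xx_t)\|^2}$ in place of $LF_t$ into the $H_t$ recursion — this is precisely what lets us close the Lyapunov argument without convexity.

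Next I would combine the three inequalities into the Lyapunov function
\[
\Psi_t \;\coloneqq\; \wtilde{F}_t + a H_t + b E_t, \qquad b \coloneqq \tfrac{8k L^2 \gamma^3}{\delta}, \qquad a \coloneqq \tfrac{512 k b}{\delta^2},
\]
and collect coefficients. The choices of $a$ and $b$ are dictated by two balance conditions: (i) the $H_t$ cross-term from $E_t$'s recursion, namely $(8kb)/(\delta a)$, must be controllable against the $-\delta/32$ contraction of $H_t$; and (ii) the back-reaction $\delta^3 a/(16 k^4 b)$ in the $E_t$ coefficient must remain small compared with $-\delta/(8k)$. These force the relations displayed above. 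Taking $\eta=\delta/400$ (i.e. $k=100$) and $\gamma \le \delta/(3200\sqrt{2}\wtilde{L})$ makes every cross-term dominated by its corresponding contraction, and the coefficient in front of $\Eb{\|\nabla f(\xx_t)\|^2}$ becomes at most $-\gamma/8$ after absorbing the $128\wtilde{L}^2\gamma^2 a/\delta$ positive contribution. The result is the clean one-step descent
\[
\Psi_{t+1} \le \Psi_t - \tfrac{\gamma}{8}\Eb{\|\nabla f(\xx_t)\|^2} + \tfrac{\gamma^2 L\sigma^2}{2n} + \tfrac{\gamma^3 \cdot 10^{10} L^2 \sigma^2}{\delta^4},
\]
which is the nonconvex analogue of \Cref{lem:descent-Psi-concurrent}, and this is what \Cref{lem:descent-Psi-concurrent-ncvx} records.

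Finally, I would invoke \Cref{lem:stich2020communication_lemma27} with $r_t = \Psi_t$, $s_t = \Eb{\|\nabla f(\xx_t)\|^2}$, $B = 1/8$, $C = L\sigma^2/(2n)$, $D = 10^{10} L^2 \sigma^2/\delta^4$, and maximum admissible stepsize $E = 3200\sqrt{2}\wtilde L/\delta$. Note $\Psi_0 \le F_0$ because $E_0 = 0$ and $H_0 = (1/n)\sum_i \Eb{\|\gg_0^i - \hh_0^i\|^2} = 0$ given the initialization $\hh_0^i = \gg_0^i$ in \Cref{alg:ef-concurrent}. Applying Remark \ref{remark:stich2020communication_lemma27} with $r_0 = F_0$ and substituting $C$, $D$, $E$ yields the three stated terms
\[
T = \cO\!\left(\tfrac{L F_0 \sigma^2}{n \varepsilon^2} + \tfrac{L F_0 \sigma}{\delta^2 \varepsilon^{3/2}} + \tfrac{\wtilde{L} F_0}{\delta \varepsilon}\right),
\]
for $\xx_{\text{out}}$ drawn uniformly at random from $\{\xx_0,\ldots,\xx_T\}$, completing the argument.

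The main obstacle, in my view, is the delicate coefficient juggling in the Lyapunov step, specifically ensuring that the cubic term $\delta^3 a /(16 k^4 b)$ coming from the $E_t$-feedback through $H_t$ is strictly smaller than the contraction $\delta/(8k)$ of $E_t$ itself, while simultaneously keeping the $H_t$ coefficient strictly contractive and the effective coefficient of $\Eb{\|\nabla f(\xx_t)\|^2}$ negative with a constant factor. This is the precise role of the control parameter $\eta$ introduced in \algname{EControl}: without it (i.e.\ $\eta = 1$) the $E_t$ recursion would not contract separately from $H_t$, and the coupled two-dimensional recursion for $(H_t, E_t)$ could not be diagonalized by a simple additive Lyapunov function. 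Once the system of constants $(a,b,k,\eta,\gamma)$ is pinned down, the remaining steps are routine.
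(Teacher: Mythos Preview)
Your proposal is correct and follows essentially the same route as the paper: the paper's proof simply cites \Cref{lem:descent-Psi-concurrent-ncvx} (whose proof is exactly the Lyapunov combination you outline, with the same choices of $a,b,k,\eta,\gamma$), then applies \Cref{lem:stich2020communication_lemma27} and Remark~\ref{remark:stich2020communication_lemma27} with $\Psi_0=F_0$. Your write-up just unpacks the content of \Cref{lem:descent-Psi-concurrent-ncvx} inline rather than citing it, but the argument and all constants match.
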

\begin{proof}
The claim of \cref{thm:ef-concurrent-ncvx-precise} follows from \cref{lem:descent-Psi-concurrent-ncvx};  \cref{lem:stich2020communication_lemma27} and remark~\ref{remark:stich2020communication_lemma27} from \citep{stich2020communication}. Note that by our choice of initialization, $\Psi_0=F_0$.
\end{proof}

\section{Importance of $\eta$ Choice}\label{sec:ef23-unstable}

\begin{figure}[!t]
\centering
        \begin{tabular}{cc}
            \hspace{8mm}(a) $\ee_0^i = \0, \hh_0^i = \nabla f_i(\xx_0)$ &
            \hspace{8mm}(b) $\ee_0^i = \0, \hh_0^i = \0$\\
            \includegraphics[width=0.45\linewidth]{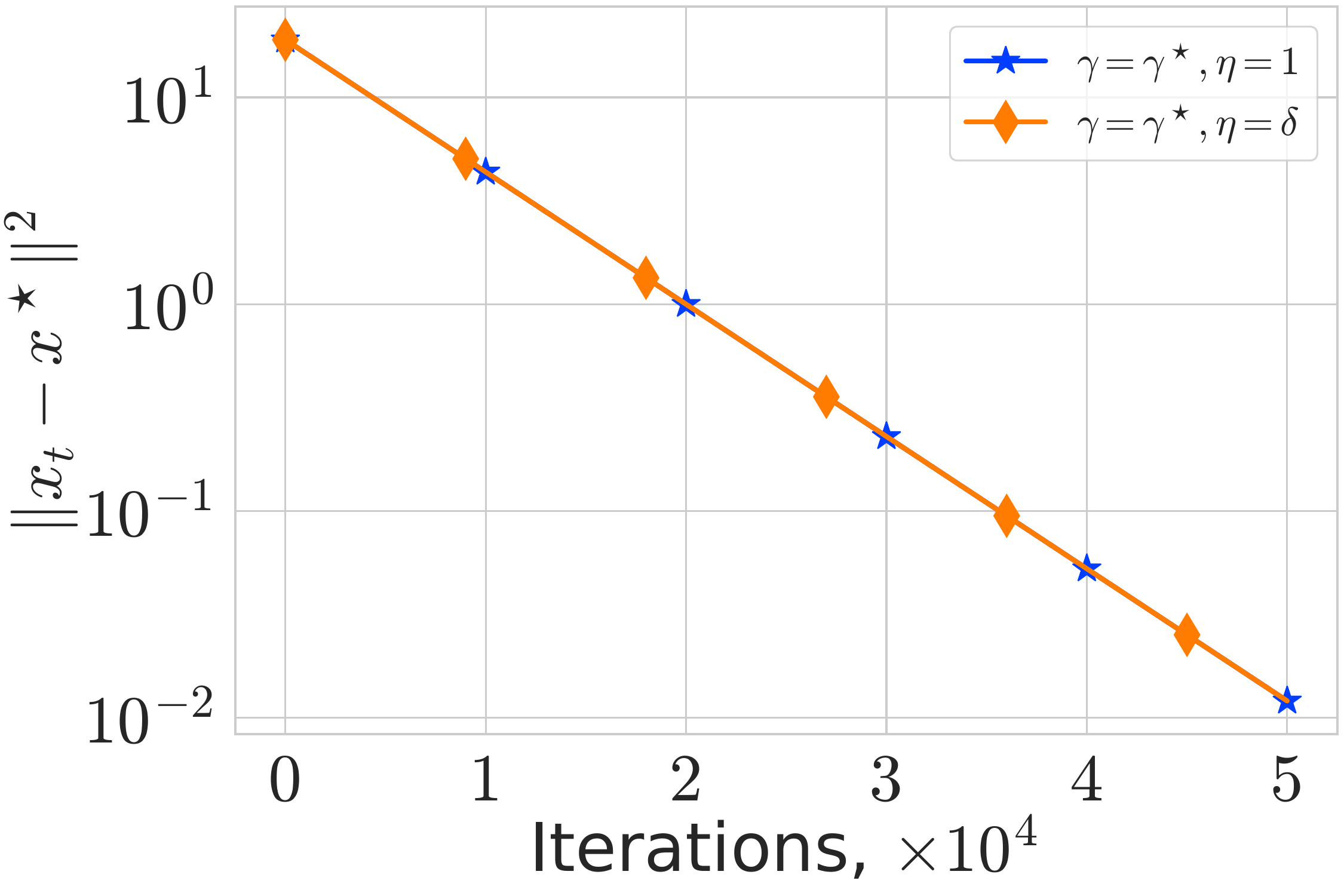} &
            \includegraphics[width=0.45\linewidth]{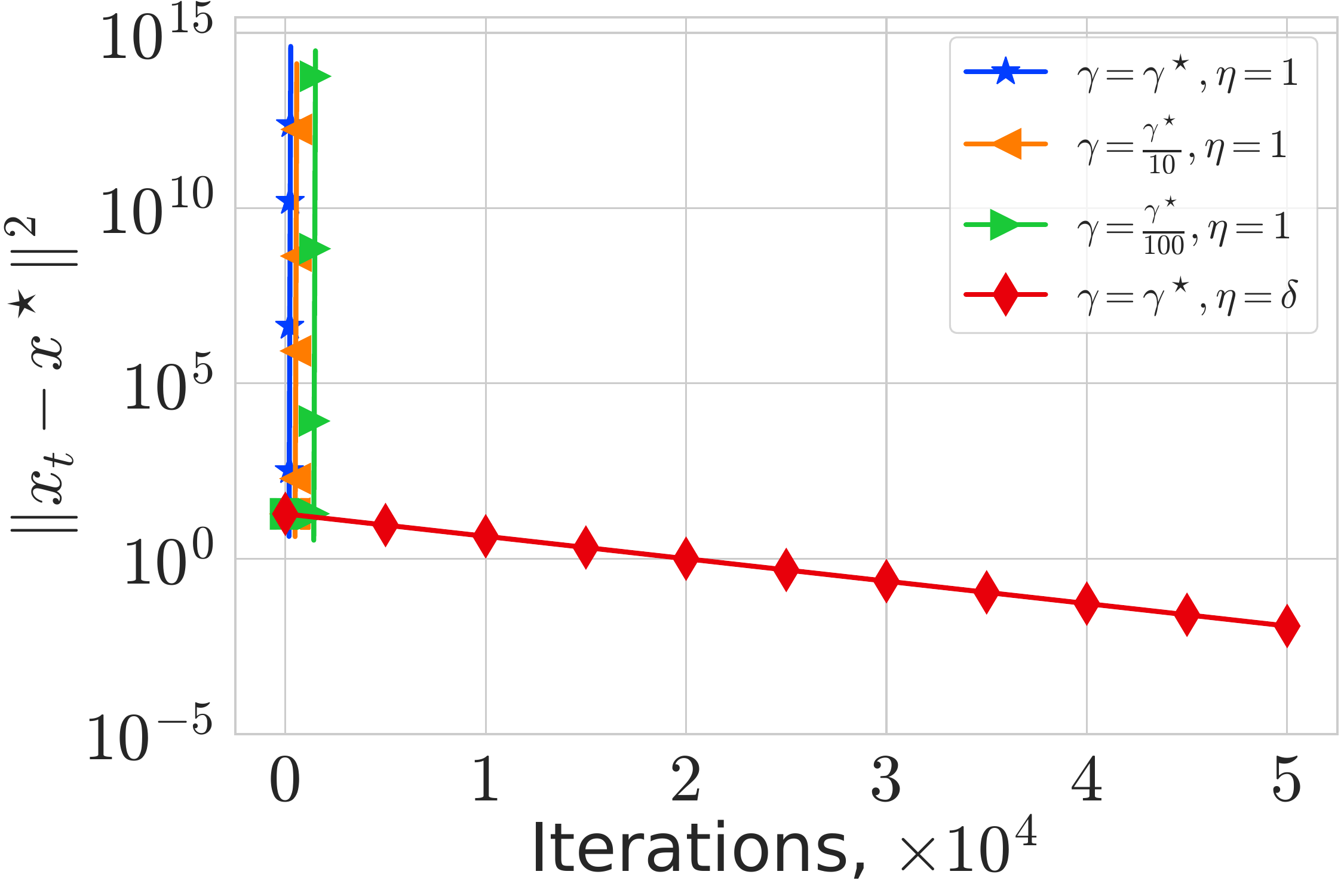} \\
        \end{tabular}             
    \caption{The convergence of \algname{EControl} with $\eta=1$ and $\eta=0.5$ for two different initializations. \algname{EControl} with $\eta = \delta$ converges regardless of the initialization while \algname{EControl} with $\eta=1$ is sensitive to the initialization. Here $\gamma^\star=\frac{\delta}{3200\sqrt{2}L}$, i.e. theoretical value of the stepsize.}
    \label{fig:unstable_example}
\end{figure}

Let us consider a simple problem with $n=2, d=3$ where $f_1$ and $f_2$ are defined as follows
\begin{equation*}
    f_1(\xx) = (1,1,5)^\top \xx + \frac{1}{2}\|\xx\|^2, \quad f_2(\xx) = (1,5,1)^\top \xx + \frac{1}{2}\|\xx\|^2.
\end{equation*}
Obviously, this problem is strongly convex. We show that $\eta$ parameter plays an essential role in stabilizing the convergence of \algname{EControl}. We show that if $\eta=1$, then with the specific choice of initialization \algname{EControl} may diverge while with $\eta=\delta$ it converges. 

In the first set of experiments we consider $\ee_0^i = \0, \hh_0^i = \0,$ while in the second one we initialize as $\ee_0^i = \0, \hh_0^i = \nabla f_i(\xx_0).$ For simplicity, we use full gradients in both cases in order not to be affected by the noise. We apply Top-$1$ compression operator in all the cases, i.e. $\delta = \frac{1}{3}$. For \algname{EControl} with $\eta=\delta$ we set $\gamma$ according to \Cref{thm:ef-concurrent-scvx}.

We demonstrate the convergence in both cases for \algname{EControl} with $\eta=1$ and $\eta = \delta.$ The results are presented in Figure~\ref{fig:unstable_example}. We observe that in both cases \algname{EControl} with $\eta = \delta$ converges linearly to the solution as it is predicted by our theory. In contrast, \algname{EControl} with $\eta = 1$ converges only if $\hh_0^i = \nabla f_i(\xx_0)$ and diverges if $\hh_0^i = \0$ regardless of the choice of $\gamma.$ Very small values of the stepsize postpone the gradient norm  blow up. This example illustrates that $\eta \sim \delta$ makes the algorithm converge more stable, i.e. it is necessary for efficient performance.

\section{\algname{EC-Approximate}: Approximate Bias Correction}
\label{sec:ef-appx}

\begin{algorithm}[!t]
    \caption{\algname{EC-Approximate}: \algname{EC} with Approximate Bias Correction}
    \label{alg:ef-appx}
    \begin{algorithmic}[1]
        \State \textbf{Input:} $\xx_0,\! \gamma$, $\ee_t^i \!= \! \0_d$, $\cC_\delta$, and $\{\hh^i\}_{i\in[n]}$ such that $\frac{1}{n}\sum_{i=1}^{n}\EEb{}{\norm{\hh^i-\nabla f_i(\xx^\star)}^2}\leq \alpha$   
        \State $\hh = \frac{1}{n}\sum_{i=1}^n \hh^i$
        \For{$t = 0,1,2,\dots$}
        \State $\gg_t^i = \gg_t^i$\hfill $\triangledown$ client side
        \State $\hat \Delta_t^i = \cC_{\delta}(\ee_t^i + \gg_t^i - \hh^i)$
        \State $\ee_{t+1}^i = \ee_t^i + \gg_t^i - \hh^i - \hat \Delta_t^i$
        \State send to server: $\hat \Delta_t^i$ 
        \State $\xx_{t+1}:= \xx_t - \gamma  \hh -  \frac{\gamma }{n} \sum_{i=1}^n  \hat \Delta_t^i$\hfill $\triangledown$ server side
        \EndFor
    \end{algorithmic}	
\end{algorithm}

In \Cref{sec:EF23_ideal} we described the ideal version of \algname{EC} when we have access to $\hh_\star^i = \nabla f_i(\xx^\star)$. However, in practice, it is not known most of the time. Therefore, we propose to use a good enough approximation instead to make the method implementable. This idea leads to \algname{EC-Approximate} summarised in \Cref{alg:ef-appx}. There, instead of $\hh_\star^i$ we use its estimator $\hh^i$ which should estimate the true value $\hh_\star^i$ well enough. In more details, we run \algname{EC} with $\hh^i$ satisfying 
\[
    \frac{1}{n}\sum_{i=1}^{n}\EEb{}{\norm{\hh^i-\nabla f_i(\xx^\star)}^2}\leq \alpha,
\]
in other words, the average distance between $\hh^i$ and $\nabla f_i(\xx^\star)$ is at most $\alpha$. A trivial choice of $\hh$ is the zero vectors, for which \Cref{alg:ef-appx} recovers the \algname{D-EC-SGD} \citep{stich2020communication} algorithm with bounded gradient (at optimum) assumption. However, the smaller $\alpha$ is (i.e., the better approximation we have), the better convergence is. Thus, it is beneficial to obtain $\hh^i$ after a preprocessing. For example, a nontrivial choice would be some $\hh^i$ such that $\frac{1}{n}\sum_{i=1}^{n}\Eb{\norm{\hh^i-\nabla f_i(\xx^\star)}^2}\leq \cO(\frac{\sigma^2L}{\delta^2\mu})$. Such $\{\hh^i\}_{i\in[n]}$ can be obtained by running \algname{EF21} for $\wtilde\cO (\frac{L}{\delta\mu} )$ rounds, which is constant and does not depend on the accuracy $\varepsilon$. We show that it is indeed possible in \Cref{sec:ef21} for completeness.

\subsection{Convergence Analysis}

In this section we prove the convergence of \Cref{alg:ef-appx}. The next lemma bounds the descent of $E_t$.
\begin{lemma}
    \label{lem:descent-E-ef-appx} Let  $f_i$ be $L$-smooth and convex. Let $\{\hh^i\}_{i\in[n]}$ be such that $\frac{1}{n}\sum_{i=1}^{n}\Eb{\norm{\hh^i-\nabla f_i(\xx^\star)}^2}\leq \alpha$, then the iterates of \algname{EC-Approximate} satisfy
    \begin{equation}
        \label{eq:descent-E-ef-appx}
        E_{t+1} \leq \left( 1-\frac{\delta }{2}\right) E_t + \frac{8L}{\delta }F_t + \frac{4\alpha}{\delta } + \sigma^2.
    \end{equation}
\end{lemma}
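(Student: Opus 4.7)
}
The plan is to start from the update rule $\ee_{t+1}^i = \ee_t^i + \gg_t^i - \hh^i - \cC_\delta(\ee_t^i + \gg_t^i - \hh^i)$, apply the contraction property of $\cC_\delta$ to the random input $\ee_t^i + \gg_t^i - \hh^i$ (with expectation taken only over the compressor's randomness), and then carefully decompose the resulting quantity $\|\ee_t^i + \gg_t^i - \hh^i\|^2$ into a part controlled by $E_t$, a part controlled by $F_t$, a part controlled by $\alpha$, and a stochastic noise part bounded by $\sigma^2$. After taking the full expectation, this yields
\begin{equation*}
\Eb{\|\ee_{t+1}^i\|^2} \le (1-\delta)\,\Eb{\|\ee_t^i + \gg_t^i - \hh^i\|^2}.
\end{equation*}

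Next I would peel off the gradient noise. Since $\gg_t^i - \nabla f_i(\xx_t)$ is zero-mean conditional on $\xx_t$ and has variance at most $\sigma^2$ by Assumption on the oracle, the cross-term vanishes and
\begin{equation*}
\Eb{\|\ee_t^i + \gg_t^i - \hh^i\|^2} \le \Eb{\|\ee_t^i + \nabla f_i(\xx_t) - \hh^i\|^2} + \sigma^2.
\end{equation*}
Then I would split off $\ee_t^i$ by Young's inequality with parameter $\beta = \delta/(2(1-\delta))$, which is the largest choice ensuring $(1-\delta)(1+\beta) \le 1 - \delta/2$; a short calculation shows that with this $\beta$ one also has $(1-\delta)(1+\beta^{-1}) = (1-\delta)(2-\delta)/\delta \le 2/\delta$. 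This gives
\begin{equation*}
(1-\delta)\Eb{\|\ee_t^i + \nabla f_i(\xx_t) - \hh^i\|^2} \le \Bigl(1 - \tfrac{\delta}{2}\Bigr)\Eb{\|\ee_t^i\|^2} + \tfrac{2}{\delta}\Eb{\|\nabla f_i(\xx_t) - \hh^i\|^2}.
\end{equation*}

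For the remaining middle term, I would apply the elementary bound $\|a-b\|^2 \le 2\|a-c\|^2 + 2\|c-b\|^2$ with $c = \nabla f_i(\xx^\star)$ to separate the bias of $\hh^i$ from the iterate drift. Averaging over $i\in[n]$ then yields, on the one hand, a term bounded by $2\alpha$ using the assumption on $\{\hh^i\}_{i\in[n]}$, and on the other hand the quantity $\tfrac{1}{n}\sum_i \Eb{\|\nabla f_i(\xx_t) - \nabla f_i(\xx^\star)\|^2}$. For this last quantity I would invoke the standard co-coercivity inequality that holds for any convex $L$-smooth function, namely $\|\nabla f_i(\xx_t) - \nabla f_i(\xx^\star)\|^2 \le 2L\bigl(f_i(\xx_t) - f_i(\xx^\star) - \langle \nabla f_i(\xx^\star), \xx_t - \xx^\star\rangle\bigr)$; averaging over $i$ and using $\nabla f(\xx^\star) = \0$ collapses the linear correction, leaving exactly $2L F_t$.

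Combining these pieces gives $(1-\delta/2) E_t + (2/\delta)(4L F_t + 2\alpha) + (1-\delta)\sigma^2$, which after using $(1-\delta)\le 1$ produces the claimed bound. The only subtle step is the Young parameter choice together with the bound $(1-\delta)(2-\delta)/\delta \le 2/\delta$; everything else is direct bookkeeping. The noise-separation step is what lets the $\sigma^2$ term appear outside the $1/\delta$ amplification, which is what the statement requires.
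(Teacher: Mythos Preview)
Your proposal is correct and follows essentially the same route as the paper's own proof: contraction of $\cC_\delta$, zero-mean noise separation, Young's inequality with $\beta=\delta/(2(1-\delta))$, the split through $\nabla f_i(\xx^\star)$, and the co-coercivity bound averaged over $i$ using $\nabla f(\xx^\star)=\0$. The only cosmetic difference is that you track the $(1-\delta)$ factor on $\sigma^2$ explicitly before bounding it by $1$, whereas the paper drops it immediately.
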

\begin{proof}
    \begin{align*}
        E_{t+1} & = \frac{1}{n} \sum_{i=1}^n \EEb{}{\norm{\ee_t^i+\gg_t^i-\hh^i-C_{\delta }(\ee_t^i+\gg_t^i-\hh^i)}^2}\\
        &\leq \frac{(1-\delta )}{n}\sum_{i=1}^n \EEb{}{\norm{\ee_t^i+\gg_t^i-\hh^i}^2}\\
        &= \frac{(1-\delta )}{n}\sum_{i=1}^n \EEb{}{\norm{\ee_t^i+\nabla f_i(\xx_t)-\hh^i}^2} + \sigma^2\\
        &\leq \frac{(1-\delta )(1+\beta)}{n}\sum_{i=1}^n\EEb{}{\norm{\ee_t^i}^2} \\
        &\quad + \frac{(1-\delta )(1+\beta^{-1})}{n}\sum_{i=1}^{n}\EEb{}{\norm{\nabla f_i(\xx_t)-\hh^i}^2} + \sigma^2.
    \end{align*}
    where in the last inequality we used Young's inequality. Setting $\beta=\frac{\delta }{2(1-\delta )}$, we have:
    \begin{align*}
        E_{t+1} & \leq \left( 1-\frac{\delta }{2}\right)E_t + \frac{4}{\delta }\frac{1}{n}\sum_{i=1}^n\EEb{}{\norm{\nabla f_i(\xx_t)-\nabla f_i(\xx^\star)}^2} \\
        &\quad + \frac{4}{\delta }\frac{1}{n}\sum_{i=1}^n\EEb{}{\norm{\nabla f_i(\xx^\star)-\hh^i}^2}+ \sigma^2\\
        &\leq \left(1-\frac{\delta }{2}\right)E_t+\frac{8L}{\delta } F_t+\frac{4\alpha}{\delta } + \sigma^2.
    \end{align*}
    where for the first inequality we used Young's inequality again, and for the second inequality we used the smoothness and convexity of $f_i$, and our assumption on $\hh^i$.
\end{proof}

Now consider the Lyapunov function $\Psi_t = X_t + aE_t$ for $a \eqdef \frac{12L\gamma^3}{\delta}$. We obtain the following descent lemma in $\Psi_t.$
\begin{lemma}
    \label{lem:descent-Psi-ef-appx} Let $f$ be $\mu$-strongly quasi-convex around $\xx^\star$, and each $f_i$ be $L$-smooth and convex. Let $\{\hh^i\}_{i\in[n]}$ be such that $\frac{1}{n}\sum_{i=1}^{n}\EEb{}{\norm{\hh^i-\nabla f_i(\xx^\star)}^2}\leq \alpha$, and the stepsize
    be $\gamma \leq \frac{\delta }{8\sqrt{6}L}$. Then the iterates of \algname{EC-Approximate} satisfy
    \begin{equation}
        \label{eq:descent-Psi-fixed-h}
        \Psi_{t+1} \leq (1-c)\Psi_t-\frac{\gamma }{4}F_t+\gamma ^2\frac{\sigma^2}{n}+\gamma ^3\left( \frac{48L\alpha}{\delta ^2} + \frac{12L\sigma^2}{\delta} \right)
    \end{equation}
    where $a\coloneqq \frac{12L\gamma ^3}{\delta }$ and $c\coloneqq \frac{\gamma \mu}{2}$
\end{lemma}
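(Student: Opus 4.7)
The plan is to combine the descent inequality for $X_t$ (Lemma~\ref{lem:descent-X}, which holds here because $f$ is $\mu$-strongly quasi-convex and the $f_i$ are $L$-smooth) with the descent inequality for $E_t$ just established in Lemma~\ref{lem:descent-E-ef-appx}, weighted by the coefficient $a = \tfrac{12L\gamma^3}{\delta}$. Concretely, writing $\Psi_{t+1} = X_{t+1} + a E_{t+1}$ and plugging in the two bounds yields
\[
    \Psi_{t+1} \leq \left(1-\tfrac{\gamma\mu}{2}\right) X_t + \left[3L\gamma^3 + a\left(1-\tfrac{\delta}{2}\right)\right] E_t + \left[-\tfrac{\gamma}{2} + \tfrac{8aL}{\delta}\right] F_t + \tfrac{\gamma^2 \sigma^2}{n} + \tfrac{4a\alpha}{\delta} + a\sigma^2.
\]
The strategy is then to verify that, with the prescribed choice of $a$ and under the stepsize restriction $\gamma \leq \tfrac{\delta}{8\sqrt{6}L}$, each bracketed coefficient collapses into the desired form.

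The first verification is for the $F_t$ coefficient: we require $-\tfrac{\gamma}{2} + \tfrac{8aL}{\delta} \leq -\tfrac{\gamma}{4}$, which, after substituting $a = \tfrac{12L\gamma^3}{\delta}$, reduces exactly to $\gamma^2 \leq \tfrac{\delta^2}{384 L^2}$, i.e., the given stepsize bound. This is precisely why the constant $8\sqrt{6}$ appears in the hypothesis. The second verification is for the $E_t$ coefficient: I want $3L\gamma^3 + a(1-\tfrac{\delta}{2}) \leq a(1-\tfrac{\gamma\mu}{2})$, which simplifies to $\tfrac{\gamma\mu}{\delta} \leq \tfrac{1}{2}$ after inserting $a$. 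Since $\mu \leq L$ and $\gamma \leq \tfrac{\delta}{8\sqrt{6}L}$, this is automatic. Thus both the $X_t$ and $aE_t$ terms contract at rate at least $(1 - \tfrac{\gamma\mu}{2}) = (1-c)$, giving the claimed $(1-c)\Psi_t$ contribution.

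For the noise and bias constants, substituting $a = \tfrac{12L\gamma^3}{\delta}$ into $\tfrac{4a\alpha}{\delta} + a\sigma^2$ yields exactly $\gamma^3\bigl(\tfrac{48L\alpha}{\delta^2} + \tfrac{12L\sigma^2}{\delta}\bigr)$, matching the statement, while the $\tfrac{\gamma^2\sigma^2}{n}$ term carries over untouched from Lemma~\ref{lem:descent-X}. Collecting everything gives the claimed inequality.

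The step I expect to require the most care is the simultaneous tuning of the constants: the stepsize restriction $\gamma \leq \tfrac{\delta}{8\sqrt{6}L}$ is precisely what makes the $F_t$ coefficient bounded by $-\tfrac{\gamma}{4}$ when $a = \tfrac{12L\gamma^3}{\delta}$, and one must check that the same $a$ also gives the $E_t$ contraction with margin compatible with the $X_t$ contraction rate $\tfrac{\gamma\mu}{2}$. Apart from this coupling, the proof is a direct assembly of the two preceding descent lemmas.
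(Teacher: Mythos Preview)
Your proposal is correct and follows essentially the same route as the paper: combine Lemma~\ref{lem:descent-X} with Lemma~\ref{lem:descent-E-ef-appx} weighted by $a=\tfrac{12L\gamma^3}{\delta}$, then verify the $F_t$ and $E_t$ coefficients using the stepsize bound. Your treatment is in fact slightly more explicit than the paper's---you correctly pin down the $F_t$ coefficient as $\leq -\tfrac{\gamma}{4}$ (the paper's displayed ``$\geq \tfrac{\gamma}{2}$'' is a typo) and you spell out the check $\tfrac{\gamma\mu}{2}\leq \tfrac{\delta}{4}$ that the paper leaves implicit when passing from the $E_t$ contraction $1-\tfrac{\delta}{4}$ to $1-c$.
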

\begin{proof}
    Putting \Cref{lem:descent-X} and \Cref{lem:descent-E-ef-appx} together, we have:
    \begin{align*}
        \Psi_{t+1} &\leq \left(1-\frac{\gamma \mu}{2}\right)X_t
        - \frac{\gamma }{2}F_t
        + \gamma ^2\frac{\sigma^2}{n}
        + 3L\gamma^3E_t\\
        &\quad + a\left(\left( 1-\frac{\delta }{2}\right) E_t 
        + \frac{8L}{\delta }F_t
        + \frac{4\alpha}{\delta } 
        + \sigma^2\right)\\
        & = \left(1-\frac{\gamma \mu}{2}\right)X_t 
        + \left(1-\frac{\delta }{2}
        + \frac{3L\gamma^3}{a} \right)aE_t \\
        & \quad + \gamma^2\frac{\sigma^2}{n}
        + a\left(\frac{4\alpha}{\delta }+\sigma^2\right)\\
        &\quad - \left(\frac{\gamma }{2} - \frac{8La}{\delta }\right)F_t.
    \end{align*}
    Plugging in the choice of $a$, we have for the $E_t$ term:
    \[
        1-\frac{\delta}{2} + \frac{3L\gamma^3}{a} = 1-\frac{\delta}{4},
    \]
    and for $F_t$ term:
    \[
        \frac{\gamma}{2} - \frac{8La}{\delta} = \frac{\gamma}{2} - \gamma \frac{96L^2\gamma^2}{\delta^2} \geq \frac{\gamma}{2}.
    \]
    Combining the above together we derive the statement of the lemma.
\end{proof}

\begin{theorem}[\algname{EC-Approximate}: \algname{EC} with approximate bias correction]
    \label{thm:ef-appx}
    Let $f:\R^d\to\R$ be $\mu$-strongly quasi-convex, and each $f_i$ be $L$-smooth and convex. Let $\{\hh^i\}_{i\in[n]}$ be such that $\frac{1}{n}\sum_{i=1}^{n}\EEb{}{\norm{\hh^i-\nabla f_i(\xx^\star)}^2}\leq \alpha$.  Let stepsize be $\gamma \le \frac{\delta}{8\sqrt{6}L}$. Then after at most
    \[
        T=\wtilde{\cO}\left(\frac{\sigma^2}{\mu n\epsilon} 
        +  \frac{\sqrt{L}\sigma}{\mu\sqrt{\delta}\sqrt{\epsilon}}
        +  \frac{\sqrt{L\alpha}}{\mu\delta\sqrt{\epsilon}} 
        + \frac{L}{\mu\delta}\right)  
    \]
    iterations of \Cref{alg:ef-appx} it holds $\Eb{f(\xx_{\text{out}})-f^\star}\leq \epsilon$, where  $\xx_{\text{out}}$ is chosen randomly from $\left\{\xx_0,\dots,\xx_{T}\right\}$ with probabilities proportional to $(1-\nicefrac{\mu\gamma}{2})^{-(t+1)}$.
\end{theorem}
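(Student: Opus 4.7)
The plan is to combine the one-step Lyapunov descent already established in \Cref{lem:descent-Psi-ef-appx} with the summation tool \Cref{lem:stich2020communication_lemma25} (and its \Cref{remark:stich2020communication_lemma25}) of \citet{stich2020communication}, since the recursion in \eqref{eq:descent-Psi-fixed-h} is exactly of the type that lemma handles.

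First, recall that $\Psi_t = X_t + a E_t$ with $a = 12L\gamma^3/\delta$, and by the initialization $\ee_0^i = \0$ we have $\Psi_0 = \|\xx_0 - \xx^\star\|^2$. The stepsize constraint $\gamma \leq \delta/(8\sqrt{6}L)$ required by \Cref{lem:descent-Psi-ef-appx} ensures $\gamma\mu/2 \leq 1$, so we may set the parameter $F$ of \Cref{lem:stich2020communication_lemma25} to $1$. I then identify the constants as $r_t = \Psi_t$, $s_t = F_t$, $A = \mu/2$, $B = 1/4$, $E = 8\sqrt{6}L/\delta$, $C = \sigma^2/n$, and $D = 48L\alpha/\delta^2 + 12L\sigma^2/\delta$.

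Second, by the randomized output rule, $\xx_{\text{out}}$ is sampled from $\{\xx_0, \dots, \xx_T\}$ with probabilities $w_t/W_T$, where $w_t = (1-\gamma\mu/2)^{-(t+1)}$, so
\[
\Eb{f(\xx_{\text{out}}) - f^\star} \;=\; \frac{1}{W_T}\sum_{t=0}^T w_t F_t,
\]
which is precisely the quantity bounded on the left-hand side of \Cref{lem:stich2020communication_lemma25}. Applying \Cref{remark:stich2020communication_lemma25} then yields an iteration complexity
\[
T \;=\; \wtilde{\cO}\!\left(\frac{C}{A\varepsilon} + \frac{\sqrt{D}}{A\sqrt{\varepsilon}} + \frac{E}{A}\right).
\]
Plugging in the values above gives $C/(A\varepsilon) = \cO(\sigma^2/(\mu n \varepsilon))$ and $E/A = \cO(L/(\mu\delta))$. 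For the middle term, I split $\sqrt{D} \leq \sqrt{48 L\alpha}/\delta + \sqrt{12 L\sigma^2/\delta}$ using $\sqrt{x+y}\leq \sqrt{x}+\sqrt{y}$, producing
\[
\frac{\sqrt{D}}{A\sqrt{\varepsilon}} \;=\; \cO\!\left(\frac{\sqrt{L\alpha}}{\mu\delta\sqrt{\varepsilon}} + \frac{\sqrt{L}\,\sigma}{\mu\sqrt{\delta\varepsilon}}\right).
\]
Summing the four terms recovers the stated bound.

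There isn't really a conceptual obstacle: all the difficulty is already packaged into \Cref{lem:descent-E-ef-appx} (the descent of $E_t$ using the approximation quality $\alpha$ and the convexity/smoothness of each $f_i$) and \Cref{lem:descent-Psi-ef-appx} (the Lyapunov combination). The only minor care point is verifying that the prescribed stepsize cap is compatible with the cap required by the summation lemma (i.e.\ $\gamma \leq 1/E$), which it is by design, and that the split of $\sqrt{D}$ does not lose constants relevant to the stated $\wtilde\cO$ rate.
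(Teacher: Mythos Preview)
Your proposal is correct and follows essentially the same approach as the paper, which simply states that the theorem follows from \Cref{lem:descent-Psi-ef-appx}, \Cref{lem:stich2020communication_lemma25}, and \Cref{remark:stich2020communication_lemma25}, noting that the stepsize cap guarantees $\gamma\mu/2\le \delta/2$. Your explicit identification of the constants $A,B,C,D,E,F$ and the split of $\sqrt{D}$ is more detailed than the paper's one-line proof but matches it exactly in substance.
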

\begin{proof}
    We need to apply the results of \Cref{lem:descent-Psi-ef-appx}, \Cref{lem:stich2020communication_lemma25}, and Remark~\ref{remark:stich2020communication_lemma25} noticing that $\frac{\gamma\mu}{2}\le \frac{\delta}{2}$ always due to the choice of the stepsize.
\end{proof}

According to the statement of \Cref{thm:ef-appx}, the inaccuracy in the approximation affects the higher order terms only. We clearly see that \algname{EC-Approximate} achieves optimal sample complexity. This result suggests that preprocessing in the beginning of the training for a constant number of iterations may lead to better  convergence guarantees in comparison with original \algname{EC}. 

In \Cref{sec:ef21} we show that in a constant number of rounds of preprocessing using \algname{EF21}, one can obtain a good $\{\hh^i\}_{i\in[n]}$ with error of the order  $\cO(\sigma^2)$ for \algname{D-EC-SGD} with approximate bias correction. We summarize it in the following corollary.

\begin{corollary}
    \label{cor:ef-appx}
    Let $f:\R^d\to\R$ be $\mu$-strongly quasi-convex, and each $f_i$ be $L$-smooth and convex. Let run \algname{EF21} as a preprocessing for $\cO\left(\frac{L}{\delta\mu} \log\frac{LF_0\delta}{\sigma^2\mu} \right)$ rounds. Let the stepsize be $\gamma \leq \frac{\delta}{8\sqrt{6}L}$. Then after at most 
    \[
        T=\wtilde{\cO}\left(
        \frac{\sigma^2}{\mu n\epsilon} 
        + \frac{L\sigma}{\mu^{3/2}\delta^2\sqrt{\epsilon}}
        + \frac{L}{\mu\delta} \right)  
    \]
    iterations of \Cref{alg:ef-appx} it holds $\Eb{f(\xx_{\text{out}})-f^\star}\leq \epsilon$, where $\xx_{\text{out}}$ is chosen randomly from $\left\{\xx_0,\dots,\xx_{T}\right\}$, chosen with probabilities proportional to $(1-\nicefrac{\mu\gamma}{2})^{-(t+1)}$.
\end{corollary}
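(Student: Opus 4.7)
The plan is a two-phase reduction. The first phase runs \algname{EF21} for $T_0 = \cO\!\left(\frac{L}{\delta\mu}\log\frac{LF_0\delta}{\sigma^2\mu}\right)$ rounds to obtain estimates $\{\hh^i\}_{i\in[n]}$ of $\{\nabla f_i(\xx^\star)\}_{i\in[n]}$; the second phase feeds these into \Cref{alg:ef-appx} and invokes \Cref{thm:ef-appx}, treating the estimates as frozen input. The parameter target is dictated by the $\alpha$-dependent term $\frac{\sqrt{L\alpha}}{\mu\delta\sqrt\epsilon}$ of \Cref{thm:ef-appx}: in order for it to be absorbed into the second term $\frac{L\sigma}{\mu^{3/2}\delta^2\sqrt\epsilon}$ of the claim, it suffices to produce $\{\hh^i\}$ satisfying $\alpha = \cO\!\left(\nicefrac{L\sigma^2}{\mu\delta^2}\right)$.

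For the first phase, I would invoke the Lyapunov analysis of stochastic \algname{EF21} in the strongly-convex smooth regime (carried out self-containedly in \Cref{sec:ef21}, mirroring \citet{richtarik2021ef21,fatkhullin2021ef21}): a recursion of the form
\begin{equation*}
    \Phi_{t+1} \le \left(1-\Omega\!\left(\nicefrac{\delta\mu}{L}\right)\right)\Phi_t + N,
\end{equation*}
where $\Phi_t \eqdef \|\xx_t-\xx^\star\|^2 + c\cdot\tfrac{1}{n}\sum_i\|\hh_t^i-\nabla f_i(\xx_t)\|^2$ for an appropriate weight $c$, and $N$ is the stochastic-gradient noise floor whose steady state $N\cdot \cO(L/(\delta\mu))$ is of order $\sigma^2/(\mu\delta^2)$. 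Unrolling this recursion from the initialization $\Phi_0 \le 2F_0/\mu + \cO(\sigma^2/\delta)$ over $T_0$ iterations contracts the geometric piece below the noise floor, and the logarithmic factor inside $T_0$ is calibrated exactly so that the two coincide, yielding $\Phi_{T_0} = \cO\!\left(\nicefrac{\sigma^2}{\mu\delta^2}\right)$.

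The second phase converts $\Phi_{T_0}$ into the target bound on $\alpha$. Setting $\hh^i \eqdef \hh_{T_0}^i$ and applying the triangle inequality together with $L_i$-smoothness of each $f_i$,
\begin{align*}
    \|\hh^i - \nabla f_i(\xx^\star)\|^2 \le 2\|\hh^i - \nabla f_i(\xx_{T_0})\|^2 + 2L_i^2\|\xx_{T_0}-\xx^\star\|^2.
\end{align*}
Taking expectations, averaging over $i$, and absorbing $L_{\max}^2$, $\wtilde L$, and $1/c$ into the constant produces $\frac{1}{n}\sum_i\Eb{\|\hh^i-\nabla f_i(\xx^\star)\|^2} \le \cO(L\Phi_{T_0}) = \cO\!\left(\nicefrac{L\sigma^2}{\mu\delta^2}\right)$, matching the target $\alpha$. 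Plugging this into \Cref{thm:ef-appx} with $\gamma\le \frac{\delta}{8\sqrt{6}L}$ gives the three terms of the claim: $\frac{\sqrt{L\alpha}}{\mu\delta\sqrt\epsilon}$ becomes $\cO\!\left(\nicefrac{L\sigma}{\mu^{3/2}\delta^2\sqrt\epsilon}\right)$, while the remaining two terms $\frac{\sigma^2}{\mu n\epsilon}$ and $\frac{L}{\mu\delta}$ are inherited verbatim.

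The main obstacle is calibrating the noise floor of stochastic \algname{EF21} under single-sample gradients, since the bounds quoted in \Cref{tab:table2} assume large batches and do not directly expose a Lyapunov recursion of the above form. The self-contained derivation in \Cref{sec:ef21} must therefore isolate a Lyapunov function contracting at rate $\Omega(\delta\mu/L)$ up to an additive floor scaling only with $\sigma^2$ (and not batch size); once this is in place, the remainder is routine arithmetic substitution into \Cref{thm:ef-appx} and the preceding remark on the number of iterations.
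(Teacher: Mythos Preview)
Your two-phase plan is exactly the paper's approach: the one-line proof there just cites \Cref{lem:descent-Psi-ef21} (the \algname{EF21-SGD} Lyapunov descent you outline) and \Cref{lem:quality-h} (the conversion to the $\alpha$-bound) and plugs into \Cref{thm:ef-appx}.

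One technical point where your sketch deviates and would cost you a factor: the paper's Lyapunov is $\Psi_t=F_t+aH_t$ with $a=1/L$, and the conversion step exploits the assumed \emph{convexity} of each $f_i$ via
\[
\tfrac{1}{n}\textstyle\sum_i\|\nabla f_i(\xx_t)-\nabla f_i(\xx^\star)\|^2
\ \le\ \tfrac{2L}{n}\textstyle\sum_i\bigl(f_i(\xx_t)-f_i(\xx^\star)-\langle\nabla f_i(\xx^\star),\xx_t-\xx^\star\rangle\bigr)
= 4LF_t,
\]
which gives the clean $\alpha\le 2H_t+4LF_t\le 4L\Psi_t$. Your route uses only $L_i$-smoothness, yielding $2L^2\|\xx_{T_0}-\xx^\star\|^2$; combined with an iterate-based Lyapunov $\Phi_t=\|\xx_t-\xx^\star\|^2+cH_t$ this produces $\alpha=\cO(L^2\Phi_{T_0})$ rather than $\cO(L\Phi_{T_0})$, and after tracking the noise floor you pick up an extra $\sqrt{L/\mu}$ in the middle term of the final rate. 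The fix is simply to use the convexity-based Bregman bound above (you have the hypothesis for it), matching \Cref{lem:quality-h}.
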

\begin{proof}
    We only need to apply the results of \Cref{lem:descent-Psi-ef21} and \Cref{lem:quality-h} in \Cref{thm:ef-appx}.
\end{proof}

\section{Convergence of \algname{EC-Ideal}}\label{sec:ef23_ideal_proofs}

In this section we derive the convergence of \algname{EC-Ideal}. The result directly follows from \Cref{thm:ef-appx} with $\alpha=0.$

\theoremefideal*
\begin{proof}
    We need to apply the results of \Cref{thm:ef-appx} with $\alpha=0$.
\end{proof}

\section{\algname{D-EC-SGD} with Bias Correction and Double Contracitve Compression}\label{sec:ef-double-contractive} 

In this section we follow algorithm \algname{D-EC-SGD} with bias correction \citep{stich2020communication}. In this algorithm, the learning mechanism for $\hh_t^i$ to approximate $\hh_\star^i$ is built using additional compressor from more  restricted class of unbiased compression operators. We show that unbiased compressor can be replaced by more general contractive one; see \Cref{alg:ef-double} for more detailed description.

\begin{algorithm}[t]
    \caption{D-EC-SGD with Bias Correction and Double Contractive Compression}
    \label{alg:ef-double}
    \begin{algorithmic}[1]
        \State \textbf{Input:} $\xx_0, \ee_0^i\!= \! \0_d$, $\hh_0^i \!=\! \gg_0^i, \hh_0 = \frac{1}{n}\sum_{i=1}^n\hh_0^i$, $\cC_{\delta_1}, \cC_{\delta_2}$, and $\gamma$
        \For{$t = 0,1,2,\dots$}
        \State compute $\gg_t^i  = \gg_t^i$\hfill $\triangledown$ client side
        \State compute $\Delta_t^i = \cC_{\delta_1}(\ee_t^i + \gg_t^i - \hh_t^i)$ and $\hat{\Delta}_t^i = \cC_{\delta_2}(\gg_t^i - \hh_t^i)$
        \State update $\ee_{t+1}^i = \ee_t^i + \gg_t^i - \hh_t^i - \Delta_t^i$ and $\hh_{t+1}^i = \hh_{t}^i + \hat{\Delta}_t^i$
        \State send to server $\Delta_t^i$ and $\hat{\Delta}_t^i$
        \State update $\xx_{t+1}:= \xx_t - \gamma \hh_t -  \frac{\gamma}{n} \sum_{i=1}^n  \Delta_t^i$\hfill $\triangledown$ server side
        \State update $\hh_{t+1} = \hh_t + \frac{1}{n}\sum_{i=1}^n \Delta_t^i$\hfill 
        \EndFor
    \end{algorithmic}	
\end{algorithm}

\subsection{Notation}

For \algname{D-EC-SGD} with bias correction and double contractive compression we consider the following notation
\begin{equation}
X_t =\Eb{\|\Tilde{\xx}_t-\xx^\star\|^2}, \quad E_t = \frac{1}{n}\sum_{i=1}^n\Eb{\|\ee_t^i\|^2}, \quad \text{and} \quad H_t = \frac{1}{n}\sum_{i=1}^n\Eb{\|\nabla f(\xx_t) - \hh_t^i\|^2}.
\end{equation}

\subsection{Convergence Analysis}

Now we present the convergence rate for \algname{D-EC-SGD} with bias correction and double contractive compression.

First, we highlight that  \Cref{lem:descent-X}. Next, we present the descent lemma in $E_t$.

\begin{lemma}\label{lem:descent-E-double-contractive} The iterates of \algname{D-EC-SGD} with bias correction and double contractive compression satisfy
    \begin{equation}\label{eq:descent-E-double-contractive}
        E_{t+1} \leq (1-\nicefrac{\delta}{2})E_t 
        + \frac{2(1-\delta_1)}{\delta_1}H_t 
        + (1-\delta_1)\sigma^2.
    \end{equation}
\end{lemma}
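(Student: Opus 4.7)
The plan is to unfold the update rule $\ee_{t+1}^i = \ee_t^i + \gg_t^i - \hh_t^i - \Delta_t^i$ with $\Delta_t^i = \cC_{\delta_1}(\ee_t^i + \gg_t^i - \hh_t^i)$, and then apply, in order, (i) the contractive compressor property \eqref{def:compressor}, (ii) the unbiasedness/variance bound from Assumption~\ref{asmp:bounded_variance}, and (iii) a Young's inequality with a carefully chosen parameter to split the cross term into $E_t$ and $H_t$ contributions. This mirrors the computation in \Cref{lem:descent-E-ef-appx}, with the difference that $\hh_t^i$ is iteration-dependent here, so the term $\nabla f_i(\xx_t) - \hh_t^i$ contributes to $H_t$ rather than to a fixed $\alpha$.

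More concretely, conditioning on the history up to the point where $\gg_t^i$ is drawn, the contraction property gives
\[
\E\|\ee_t^i + \gg_t^i - \hh_t^i - \Delta_t^i\|^2 \leq (1-\delta_1)\,\E\|\ee_t^i + \gg_t^i - \hh_t^i\|^2.
\]
Since $\ee_t^i$ and $\hh_t^i$ are measurable with respect to the past while $\gg_t^i$ is an unbiased estimator of $\nabla f_i(\xx_t)$ with variance bounded by $\sigma^2$, I can split
\[
\E\|\ee_t^i + \gg_t^i - \hh_t^i\|^2 = \E\|\ee_t^i + \nabla f_i(\xx_t) - \hh_t^i\|^2 + \E\|\gg_t^i - \nabla f_i(\xx_t)\|^2 \leq \E\|\ee_t^i + \nabla f_i(\xx_t) - \hh_t^i\|^2 + \sigma^2.
\]

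Next, Young's inequality with parameter $\beta > 0$ yields
\[
\|\ee_t^i + \nabla f_i(\xx_t) - \hh_t^i\|^2 \leq (1+\beta)\|\ee_t^i\|^2 + (1+\beta^{-1})\|\nabla f_i(\xx_t) - \hh_t^i\|^2.
\]
Choosing $\beta = \frac{\delta_1}{2(1-\delta_1)}$ so that $(1-\delta_1)(1+\beta) \leq 1 - \delta_1/2$ and $(1-\delta_1)(1+\beta^{-1}) \leq \frac{2(1-\delta_1)}{\delta_1}$ (using $2-\delta_1 \leq 2$), averaging over $i \in [n]$, and identifying $E_t$ and $H_t$ on the right-hand side gives exactly \eqref{eq:descent-E-double-contractive}.

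No step looks genuinely obstructive: the proof is a direct mechanical application of the standard error-compensation template, essentially identical to \Cref{lem:descent-E-ef-appx}, but with the approximation target $\hh^i$ replaced by the adaptive estimator $\hh_t^i$ (which produces $H_t$ in place of $\alpha$, and no extra smoothness/convexity usage is required since we stop at $\|\nabla f_i(\xx_t) - \hh_t^i\|^2$). The only minor delicacy is the conditional-expectation argument used to peel off $\sigma^2$, which requires the usual filtration setup, and the choice of $\beta$ to match the stated constants.
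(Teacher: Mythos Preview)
Your proposal is correct and essentially identical to the paper's proof: the paper also applies the contractive compressor bound, separates the stochastic noise via unbiasedness to peel off the $(1-\delta_1)\sigma^2$ term, then applies Young's inequality with the very same parameter $\alpha_1=\frac{\delta_1}{2(1-\delta_1)}$ to split into $E_t$ and $H_t$. Your remark that this is just \Cref{lem:descent-E-ef-appx} with $\hh^i$ replaced by $\hh_t^i$ (so that $H_t$ appears instead of $\alpha$) is exactly the right perspective.
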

\begin{proof}
    We have
    \begin{eqnarray*}
        E_{t+1} &=& \frac{1}{n}\sum_{i=1}^n \Eb{\|\ee_{t+1}^i\|^2}\\
        &=& \frac{1}{n}\sum_{i=1}^n \Eb{\|\ee_{t}^i + \gg_t^i - \hh_t^i - \cC_{\delta_1}(\ee_t^i+\gg_t^i-\hh_t^i)\|^2}\\
        &\le& \frac{(1-\delta_1)}{n}\sum_{i=1}^n\Eb{\|\ee_{t}^i + \gg_t^i - \hh_t^i\|^2}\\
        &\le& \frac{(1-\delta_1)}{n}\sum_{i=1}^n\Eb{\|\ee_{t}^i + \nabla f_i(\xx_t) - \hh_t^i\|^2} 
        +  (1-\delta_1)\sigma^2.
\end{eqnarray*}
Using Young's inequality we continue
\begin{eqnarray*}
        E_{t+1}
        &\le& (1-\delta_1)(1+\alpha_1)E_t 
        + (1-\delta_1)(1+\alpha_1^{-1})\frac{1}{n}\sum_{i=1}^n\Eb{\|\nabla f_i(\xx_t) - \hh_t^i\|^2}\\
        && \;+\; (1-\delta_1)\sigma^2.
    \end{eqnarray*}
    Now, if we choose $\alpha_1=\frac{\delta_1}{2(1-\delta_1)},$ we obtain the statement of the lemma. 
\end{proof}

\begin{lemma}\label{lem:distances-iterates-double-contractive}
    Let $f$ be $L$-smooth, then the iterates of \algname{D-EC-SGD} with bias correction and double contractive compression satisfy
    \begin{equation}\label{eq:descent-distance-double-contractive}
        \frac{1}{\gamma^2}\Eb{\|\xx_{t+1}-\xx_t\|^2} \le 4\sigma^2 + 4H_t + 8E_t + 8LF_t.
    \end{equation}
\end{lemma}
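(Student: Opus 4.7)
The plan is to imitate the proof of Lemma~\ref{lem:distance-iterates-ncvx}, accounting for the fact that here $H_t$ measures the discrepancy $\nabla f_i(\xx_t)-\hh_t^i$ (interpreting the paper's $\nabla f(\xx_t)$ as a typo for $\nabla f_i(\xx_t)$, which is consistent with the algorithm, since $\hh_t^i$ is built to track $\nabla f_i(\xx_t)$ via the compressed residuals $\hat\Delta_t^i$). From the server update, $\xx_{t+1}-\xx_t = -\gamma(\hh_t+\Delta_t)$, so the task reduces to bounding $\Eb{\|\hh_t+\Delta_t\|^2}$. The top-level move is the split
\[
    \hh_t+\Delta_t \;=\; (\ee_t+\gg_t) \;+\; \bigl(\Delta_t-(\ee_t+\gg_t-\hh_t)\bigr),
\]
followed by $\|a+b\|^2\le 2\|a\|^2+2\|b\|^2$, which breaks the proof into a ``signal'' piece and a ``compression error'' piece handled independently.

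For the signal piece I would exploit unbiasedness and independence of $\{\gg_t^i-\nabla f_i(\xx_t)\}_{i\in[n]}$ to write
\[
 \Eb{\|\ee_t+\gg_t\|^2} = \Eb{\|\ee_t+\nabla f(\xx_t)\|^2}+\Eb{\|\gg_t-\nabla f(\xx_t)\|^2},
\]
with the variance term at most $\sigma^2/n$ by averaging across clients. Young's inequality plus the standard consequence $\|\nabla f(\xx_t)\|^2 \le 2LF_t$ of $L$-smoothness (and optimality of $\xx^\star$) bounds the first summand by $2E_t+4LF_t$. After the factor of two from the outer Young, this contributes $4E_t+8LF_t+2\sigma^2/n$.

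For the compression-error piece, Jensen on the outer average gives $\frac{1}{n}\sum_i\Eb{\|\Delta_t^i-(\ee_t^i+\gg_t^i-\hh_t^i)\|^2}$. Applying the contraction $\Eb{\|\cC_{\delta_1}(\xx)-\xx\|^2}\le(1-\delta_1)\|\xx\|^2$ conditionally on $(\ee_t^i,\gg_t^i,\hh_t^i)$ reduces this to $(1-\delta_1)\cdot\frac{1}{n}\sum_i\Eb{\|\ee_t^i+\gg_t^i-\hh_t^i\|^2}$. Splitting off the gradient noise as before and using Young on the deterministic part $\ee_t^i+\nabla f_i(\xx_t)-\hh_t^i$ yields $2E_t+2H_t+\sigma^2$ after averaging. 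Bounding $(1-\delta_1)\le 1$ and multiplying by two gives a contribution of at most $4E_t+4H_t+2\sigma^2$.

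Summing the two contributions produces $8E_t+4H_t+8LF_t+(2+2/n)\sigma^2$, and using $2+2/n\le 4$ for $n\ge 1$ yields exactly $4\sigma^2+4H_t+8E_t+8LF_t$. The only thing to be careful about is the noise bookkeeping: one must separate the stochastic variance once in the signal piece and once inside the compression piece (using conditional unbiasedness each time to kill cross-terms), and one must use independence across clients to secure the $1/n$ factor in the signal's variance contribution; the coefficients in the statement absorb the small slack $2/n$ via $(1-\delta_1)\le 1$ and $2+2/n\le 4$. Beyond this bookkeeping there is no substantive obstacle; the argument is structurally identical to Lemma~\ref{lem:distance-iterates-ncvx}.
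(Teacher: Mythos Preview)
Your proposal is correct and follows essentially the same approach as the paper's proof: the same split $\hh_t+\Delta_t=(\ee_t+\gg_t)+(\Delta_t-\ee_t-\gg_t+\hh_t)$, the same use of Jensen and the contraction property on the compression-error piece, and the same variance separation and Young's inequality on the signal piece, arriving at the identical coefficients. Your observation that $H_t$ should involve $\nabla f_i(\xx_t)$ rather than $\nabla f(\xx_t)$ is also in line with how the paper actually uses the quantity in the proof.
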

\begin{proof}
    We have
    \begin{align*}
    \frac{1}{\gamma^2}\Eb{\|\xx_{t+1} - \xx_t\|^2} &= \Eb{\|\hh^t \pm \ee_t\pm \gg_t + \frac{1}{n}\sum_{i=1}^n\Delta_t^i\|^2}\\
    &\le 2\Eb{\|\hh_t-\ee_t-\gg_t+\frac{1}{n}\sum_{i=1}^n\cC_{\delta_1}(\ee_t^i+\gg_t^i-\hh_t^i)\|^2}
    + 2\Eb{\|\ee_t +\gg_t\|^2}\\
    &\le \frac{2}{n}\sum_{i=1}^n\Eb{\|\hh_t^i-\ee_t^i-\gg_t^i+\cC_{\delta_1}(\ee_t^i+\gg_t^i-\hh_t^i)\|^2} 
    + 2\frac{\sigma^2}{n}
    + 4E_t\\
    & \qquad \;+\; 4\Eb{\|\nabla f(\xx_t)\|^2}\\
    &\le \frac{2(1-\delta_1)}{n}\sum_{i=1}^n\Eb{\|\hh_t^i-\ee_t^i-\gg_t^i\|^2}
    + 2\frac{\sigma^2}{n} 
    + 4E_t 
    + 8LF_t\\
    &\le 2(1-\delta_1)\sigma^2 
    + 4(1-\delta_1)E_t
    + 4(1-\delta_1)H_t
    + 2\frac{\sigma^2}{n} 
    + 4E_t 
    + 4LF_t\\
    &\le 4\sigma^2 + 4H_t + 8E_t + 8LF_t. \qedhere
    \end{align*}
\end{proof}

In our next lemma we state the descent in $H_t.$
\begin{lemma}\label{lem:descent-H-double-contractive} Let $f$ be $L$-smooth and each $f_i$ be $L_i$-smooth, then the iterates of \algname{D-EC-SGD} with bias correction and double contractive compression satisfy
    \begin{equation*}
        H_{t+1} \le (1-\frac{\delta_2}{4}+\frac{16\Tilde L^2\gamma ^2}{\delta_2})H_t + \frac{32\Tilde L^2\gamma ^2}{\delta_2}E_t + \frac{16\Tilde L^2L\gamma ^2}{\delta_2}F_t + \left(\frac{16\Tilde L^2\gamma ^2}{\delta_2}+\frac{8}{\delta_2}\right)\sigma^2
    \end{equation*}
    In particular, if $\gamma \leq \frac{\delta_2}{8\sqrt{2}\Tilde L}$, then:
    \begin{equation}
        \label{eq:descent-H-double-contractive}
        H_{t+1} \le (1-\frac{\delta_2}{8})H_t + \frac{32\Tilde L^2\gamma ^2}{\delta_2}E_t + \frac{16\Tilde L^2L\gamma ^2}{\delta_2}F_t + \left(\frac{16\Tilde L^2\gamma ^2}{\delta_2}+\frac{8}{\delta_2}\right)\sigma^2
    \end{equation}
\end{lemma}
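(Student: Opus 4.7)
The plan is to track the one-step evolution of $H_{t+1}=\frac{1}{n}\sum_i\Eb{\|\nabla f_i(\xx_{t+1})-\hh_{t+1}^i\|^2}$ by isolating three distinct error sources: (i) the shift in the reference point from $\xx_t$ to $\xx_{t+1}$, controlled via per-client smoothness; (ii) the compression residual of $\cC_{\delta_2}$ applied to $\gg_t^i-\hh_t^i$, controlled via contractivity; and (iii) the stochastic gradient noise $\gg_t^i-\nabla f_i(\xx_t)$, controlled via Assumption~\ref{asmp:bounded_variance}. Decoupling these three effects will be done by a double application of Young's inequality.

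First, I would write $\nabla f_i(\xx_{t+1})-\hh_{t+1}^i=(\nabla f_i(\xx_t)-\hh_{t+1}^i)+(\nabla f_i(\xx_{t+1})-\nabla f_i(\xx_t))$ and apply Young's inequality with a parameter $\beta_1\asymp\delta_2$. The second summand is bounded by $(1+\beta_1^{-1})L_i^2\Eb{\|\xx_{t+1}-\xx_t\|^2}$; averaging over $i$ and using $\wtilde L^2=\frac{1}{n}\sum_i L_i^2$ turns this into $(1+\beta_1^{-1})\wtilde L^2\Eb{\|\xx_{t+1}-\xx_t\|^2}$, which I expand with Lemma~\ref{lem:distances-iterates-double-contractive} to extract the $\sigma^2$, $H_t$, $E_t$, and $F_t$ contributions.

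Second, for the other summand, I use $\hh_{t+1}^i=\hh_t^i+\cC_{\delta_2}(\gg_t^i-\hh_t^i)$ and rewrite $\nabla f_i(\xx_t)-\hh_{t+1}^i=[\gg_t^i-\hh_t^i-\cC_{\delta_2}(\gg_t^i-\hh_t^i)]-[\gg_t^i-\nabla f_i(\xx_t)]$, then apply Young again with parameter $\beta_2\asymp\delta_2$. Conditioning on the history through time $t$: the first bracket has conditional expectation of its squared norm at most $(1-\delta_2)\|\gg_t^i-\hh_t^i\|^2$ by contractivity of $\cC_{\delta_2}$; the remaining $\Eb{\|\gg_t^i-\hh_t^i\|^2}$ splits, via the zero-mean property of the stochastic noise, into $\sigma^2+\Eb{\|\nabla f_i(\xx_t)-\hh_t^i\|^2}$; the second bracket contributes $\sigma^2$. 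Tuning $\beta_2$ so that $(1+\beta_2)(1-\delta_2)\le 1-\delta_2/2$ yields a bound of the form $\frac{1}{n}\sum_i\Eb{\|\nabla f_i(\xx_t)-\hh_{t+1}^i\|^2}\le (1-\delta_2/2)H_t+O(1/\delta_2)\sigma^2$.

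Combining the two Young estimates, I would tune $\beta_1$ so that $(1+\beta_1)(1-\delta_2/2)\le 1-\delta_2/4$ while $1+\beta_1^{-1}=O(1/\delta_2)$ (for instance $\beta_1=\delta_2/(4-\delta_2)$), then collect coefficients to arrive at the first displayed inequality with the $H_t$ self-feedback term $16\wtilde L^2\gamma^2/\delta_2$. The second, tighter inequality under $\gamma\le\delta_2/(8\sqrt 2\wtilde L)$ is immediate: then $16\wtilde L^2\gamma^2/\delta_2\le \delta_2/8$, so the net coefficient on $H_t$ becomes $1-\delta_2/4+\delta_2/8=1-\delta_2/8$. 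The main obstacle is the self-coupling on $H_t$ that re-enters through Lemma~\ref{lem:distances-iterates-double-contractive}: the coefficient $\asymp\wtilde L^2\gamma^2/\delta_2$ it contributes must remain strictly smaller than the contraction gain $\delta_2/4$, and this is exactly what the step-size restriction ensures.
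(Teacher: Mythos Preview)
Your proposal is correct and follows essentially the same route as the paper's own proof: both apply Young's inequality twice---once to separate the $\xx_t\to\xx_{t+1}$ drift (handled via per-client smoothness and Lemma~\ref{lem:distances-iterates-double-contractive}) from the update residual, and once more to separate the compression error of $\cC_{\delta_2}$ from the stochastic noise---then tune the two Young parameters to obtain the $(1-\delta_2/4)$ contraction on $H_t$ and absorb the self-feedback term under the step-size restriction. The only differences are cosmetic (your $\beta_1,\beta_2$ play the roles of the paper's $\beta^{-1},s_1$), and your identification of the $H_t$ self-coupling as the key obstacle matches the paper exactly.
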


\begin{proof}
    We have taking the expectation $\EEb{t}{\cdot}$ w.r.t $\xx_t$:
\begin{align*}
    \EEb{t}{H_{t+1}} &= \frac{1}{n}\sum_{i=1}^n \EEb{t}{\norm{\nabla f_i(\xx_{t+1}) - \hh_t^i - \cC_{\delta_2}(\gg_t^i - \hh_t^i)}^2\mid }\\
    &\overset{(i)}{\leq} \frac{1}{n}\sum_{i=1}^n (1+\beta)\EEb{t}{\norm{\nabla f_i(\xx_{t+1}) - \nabla f_i(\xx_t)}^2}\\
    &\quad + \frac{1}{n}\sum_{i=1}^n(1+\beta^{-1})\EEb{t}{\norm{\nabla f_i(\xx_t) - \hh_t^i - \cC_{\delta_2}(\gg_t^i-\hh_t^i)}^2}\\
    &\overset{(ii)}{\leq} \Tilde L^2(1+\beta)\EEb{t}{\|\xx_{t+1}-\xx_t\|^2}\\
    &\quad + \frac{1}{n}\sum_{i=1}^n(1+\beta^{-1})\EEb{t}{\norm{\gg_t^i + (\nabla f_i(\xx_t) - \gg_t^i) - \hh_t^i - \cC_{\delta_2}(\gg_t^i-\hh_t^i)}^2} \\
    &\overset{(iii)}{\leq} \Tilde L^2(1+\beta)\EEb{t}{\|\xx_{t+1}-\xx_t\|^2}\\
    &\quad + \frac{1}{n}\sum_{i=1}^n(1+\beta^{-1})(1+s_1)\EEb{t}{\|\gg_t^i - \hh_t^i - \cC_{\delta_2}(\gg_t^i-\hh_t^i)\|^2}\\
    &\quad + \frac{1}{n}\sum_{i=1}^n(1+\beta^{-1})(1+s_1^{-1})\EEb{t}{\norm{\nabla f_i(\xx_t) -\gg_t^i}^2}\\
    &\overset{(iv)}{\leq} \Tilde L^2(1+\beta)\E\|\xx_{t+1}-\xx_t\|^2\\
    &\quad + \frac{1}{n}\sum_{i=1}^n(1+\beta^{-1})(1+s_1)(1-\delta_2)\EEb{t}{\|\gg_t^i-\hh_t^i\|^2} + (1+\beta^{-1})(1+s_1^{-1})\sigma^2,
\end{align*}
where in $(i)$ we use Young's inequality; in $(ii)$ we $L$-smoothness; $(iii)$ we use variance decomposition; in $(iv)$ we use the definition of $\cC_{\delta_2}$ and bounded variance assumption.
Note that $\gg_t^i$'s are independent from $\nabla f_i(\xx_t)-\hh_t^i$, and $\EEb{t}{\nabla f_i(\xx_t)-\gg_t^i}=0$, we have:
\begin{align*}
    \EEb{t}{H_{t+1}} &\leq \Tilde L^2(1+\beta)\EEb{t}{\norm{\xx_{t+1}-\xx_t}^2} + \frac{1}{n}\sum_{i=1}^n(1+\beta^{-1})(1+s_1)(1-\delta_2)\norm{\nabla f_i(\xx_t)-\hh_t^i}^2\\
    &\quad + (1+\beta^{-1})(1+s_1)(1-\delta_2)\sigma^2 + (1+\beta^{-1})(1+s_1^{-1})\sigma^2
\end{align*}

We also can upper bound $\E\|\xx_{t+1}-\xx_t\|^2$ by \Cref{lem:distances-iterates-double-contractive} which leads to 
\begin{align*}
    \EEb{t}{H_{t+1}} &\le (1+\beta^{-1})(1+s_1)(1-\delta_2)H_t\\
    &\quad + \Tilde L^2(1+\beta)\gamma ^2 \left(8E_t + 4H_t + 4LF_t + 4\sigma^2\right)\\
    &\quad + \left((1+\beta^{-1})(1+s_1^{-1})+(1+\beta^{-1})(1+s_1)(1-\delta_2)\right)\sigma^2\\
\end{align*}
Now we need to properly set all constants $\beta, s_1,$ to derive the lemma statement. In particular, if we choose $\beta=\frac{4-2\delta_2}{\delta_2}$ and $s_1=\frac{\delta_2}{2(1-\delta_2)}$, then 

\begin{align*}
    H_{t+1} &\le (1-\frac{\delta_2}{4}+\frac{16\Tilde L^2\gamma ^2}{\delta_2})H_t + \frac{32\Tilde L^2\gamma ^2}{\delta_2}E_t + \frac{16\Tilde L^2L\gamma ^2}{\delta_2}F_t + \left(\frac{16\Tilde L^2\gamma ^2}{\delta_2}+\frac{8}{\delta_2}\right)\sigma^2 \qedhere
\end{align*}

\end{proof}

Next we consider the Lyapunov function $\Psi_t \eqdef X_t + aH_t+bE_t$.
\begin{lemma}
    \label{lem:descent-Psi-double-contractive}
    Let $f$ be $L$-smooth and $\mu$-strongly quasi-convex, and each $f_i$ be $L_i$-smooth. If $\gamma\leq \frac{\delta_1\delta_2}{64\sqrt{2}\Tilde L}$, then:
    \begin{equation}\label{eq:descent-Psi-double-contractive}
        \Psi_{t+1} \leq (1-\frac{\gamma\mu}{2})\Psi_t - \frac{\gamma}{4}F_t + \gamma^2 \frac{\sigma^2}{n} + \gamma^3\frac{3100L\sigma^2}{\delta_1^2\delta_2^2}
    \end{equation}
    where $b\eqdef \frac{12L\gamma ^3}{\delta_1}$ and $a\eqdef \frac{32b}{\delta_1\delta_2}$.
\end{lemma}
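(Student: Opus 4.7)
The plan is to combine the three descent inequalities already available into a single inequality on the Lyapunov function $\Psi_t = X_t + aH_t + bE_t$, and then pick $a$, $b$, and the stepsize so that each cross term is absorbed into a contractive factor. Concretely, I would start from Lemma~\ref{lem:descent-X} applied to $X_t$, multiply the bound of Lemma~\ref{lem:descent-H-double-contractive} by $a$, multiply the bound of Lemma~\ref{lem:descent-E-double-contractive} by $b$, and sum. After collecting terms, $\Psi_{t+1}$ is upper-bounded by a linear combination of $X_t$, $H_t$, $E_t$, $F_t$, and $\sigma^2$, with coefficients that mix $\gamma$, $L$, $\tilde L$, $\delta_1$, $\delta_2$, $a$, and $b$.

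The next step is to fix the free parameters. The choice $b = 12L\gamma^3/\delta_1$ is tailored so that the leftover term $3L\gamma^3 E_t$ from the $X$-descent is exactly $\tfrac{\delta_1}{4}\,bE_t$, which combined with the intrinsic $(1-\delta_1/2)bE_t$ from Lemma~\ref{lem:descent-E-double-contractive} yields an effective rate of $(1-\delta_1/4)bE_t$ before the remaining $\tilde L^2\gamma^2$-cross term from Lemma~\ref{lem:descent-H-double-contractive} is added. The choice $a = 32b/(\delta_1\delta_2)$ is tuned so that the cross term $\tfrac{2b(1-\delta_1)}{\delta_1}H_t$ coming from the $E$-descent is at most $\tfrac{\delta_2}{16}\,aH_t$, which together with the $(1-\delta_2/8)aH_t$ term from Lemma~\ref{lem:descent-H-double-contractive} gives an effective contraction of $(1-\delta_2/16)$ on the $H$-coordinate.

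I would then impose $\gamma \leq \delta_1\delta_2/(64\sqrt{2}\tilde L)$ and check three things. First, that the remaining $\tilde L^2\gamma^2/\delta_2$ contributions inside the $E_t$ and $H_t$ coefficients (routed through $a$) are dominated by $\delta_1/8 \cdot b$ and $\delta_2/32 \cdot a$ respectively, preserving the effective contractions above. Second, that the $F_t$ coefficient $-\gamma/2 + 16a\tilde L^2 L\gamma^2/\delta_2$ can be upper-bounded by $-\gamma/4$ (substituting the explicit $a$ turns this into an inequality of the form $\gamma^4 \lesssim \delta_1^2\delta_2^2/(L^2\tilde L^2)$, which follows from the stated stepsize bound combined with $L \le \tilde L$). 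Third, that the residual $\sigma^2$ contributions $\gamma^2/n + (8a/\delta_2 + 16a\tilde L^2\gamma^2/\delta_2 + b)\sigma^2$ reorganize into the stated $\gamma^2\sigma^2/n + \gamma^3 \cdot 3100 L\sigma^2/(\delta_1^2\delta_2^2)$, where the dominant noise term $8a\sigma^2/\delta_2 = 3072 L\gamma^3\sigma^2/(\delta_1^2\delta_2^2)$ accounts for essentially all of the constant and the remaining pieces are swept into it using $\gamma \leq 1$.

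The final step is to note that all three per-coordinate contraction factors are at least $1-\gamma\mu/2$: this follows because $\gamma\mu/2 \le \delta_1\mu\delta_2/(128\sqrt{2}\tilde L) \le \min\{\delta_1/4, \delta_2/16\}$ under the stated stepsize bound (using $\mu \le L \le \tilde L$ and $\delta_1,\delta_2 \le 1$). Replacing each of the three rates by the common worse rate $1-\gamma\mu/2$ yields exactly~\eqref{eq:descent-Psi-double-contractive}. The main obstacle, in my view, is the calibration of $a$ and $b$: the three cross terms (from $X$ to $E$, from $E$ to $H$, and from $H$ back to $F_t$ via $\tilde L^2$ smoothness) must each be tamed without inflating the stepsize requirement beyond $\delta_1\delta_2/\tilde L$, and the doubled compression means the contraction budget is a product $\delta_1\delta_2$ rather than a single $\delta$, which is what forces the $1/(\delta_1^2\delta_2^2)$ factor in the noise term and the tighter stepsize relative to the single-compressor analysis.
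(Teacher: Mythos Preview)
Your proposal is correct and follows essentially the same approach as the paper: combine Lemmas~\ref{lem:descent-X}, \ref{lem:descent-E-double-contractive}, and \ref{lem:descent-H-double-contractive}, collect the coefficients in front of $X_t$, $H_t$, $E_t$, $F_t$, and $\sigma^2$, and then verify that the specific choices $b=12L\gamma^3/\delta_1$, $a=32b/(\delta_1\delta_2)$, and $\gamma\le \delta_1\delta_2/(64\sqrt{2}\tilde L)$ make each coefficient meet its target bound. Your rationale for each choice (why $3L\gamma^3/b=\delta_1/4$, why $2b/(\delta_1 a)=\delta_2/16$, why $8a/\delta_2=3072L\gamma^3/(\delta_1^2\delta_2^2)$ dominates the noise, and why $\gamma\mu/2\le\min\{\delta_1/8,\delta_2/16\}$) matches the paper's computations exactly.
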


\begin{proof}
    By \Cref{lem:descent-X}, \Cref{lem:descent-E-double-contractive}, and \Cref{lem:descent-H-double-contractive}, we have:
    \begin{align*}
        \Psi_{t+1} & = X_{t+1} + a H_{t+1} + b E_{t+1} \\
            & \leq (1-\frac{\gamma \mu}{2})X_t-\frac{\gamma }{2}F_t+\frac{\gamma ^2}{n}\sigma^2 + 3L\gamma ^3E_t \\
            & \quad + a \left( (1-\frac{\delta_2}{8})H_t + \frac{32\Tilde L^2\gamma ^2}{\delta_2}E_t + \frac{16\Tilde L^2L\gamma ^2}{\delta_2}F_t + \left(\frac{16\Tilde L^2\gamma ^2}{\delta_2}+\frac{8}{\delta_2}\right)\sigma^2\right) \\
            & \quad + b \left( (1-\frac{\delta_1}{2})E_t+\frac{2(1-\delta_1)}{\delta_1}H_t+(1-\delta_1)\frac{\sigma^2}{B}\right)\\
            & = (1-\frac{\gamma \mu}{2})X_t + \left(1-\frac{\delta_2}{8}+\frac{2(1-\delta_1)b}{\delta_1 a}\right)aH_t \\
            & \quad + \left(1-\frac{\delta_1}{2}+\frac{3L\gamma ^3}{b}+\frac{32\Tilde L^2\gamma ^2a}{\delta_2b}\right)bE_t \\
            & \quad + \left(\frac{\gamma ^2}{n}+ a(\frac{16\Tilde L^2\gamma ^2}{\delta_2}+\frac{8}{\delta_2})+b(1-\delta_1)\right)\sigma^2\\
            & \quad +\left(-\frac{\gamma }{2}+\frac{16\Tilde L^2L\gamma ^2a}{\delta_2}\right)F_t
    \end{align*}
    If $b\eqdef \frac{12L\gamma ^3}{\delta_1}$, $a\eqdef \frac{32b}{\delta_1\delta_2}$, and if $\gamma \leq \frac{\delta_1\delta_2}{64\sqrt{2}\Tilde L}$, then for the coefficients next to $H_t$ we have:
    \[
        1-\frac{\delta_2}{8} + \frac{2(1-\delta_1)b}{\delta_1 a} \leq 1-\frac{\delta_2}{16};
    \]
    for the coefficients next to $E_t$ we have:
    \[
        1-\frac{\delta_1}{2}+\frac{3L\gamma ^3}{b}+\frac{32\Tilde L^2\gamma ^2a}{\delta_2b} \leq 1-\frac{\delta_1}{8};
    \]
    for the coefficients next to $\sigma^2$ we have:
    \[
        a(\frac{16\Tilde L^2\gamma ^2}{\delta_2}+\frac{8}{\delta_2})+b(1-\delta_1) \leq \gamma^3\frac{3100L}{\delta_1^2\delta_2^2};
    \]
    for the coefficients next to $F_t$ we have:
    \[
        -\frac{\gamma }{2}+\frac{16\Tilde L^2L\gamma ^2a}{\delta_2} \leq - \frac{\gamma}{4}.
    \]
    Putting it together we have:
    \[
        \Psi_{t+1} \leq (1-\frac{\gamma\mu}{2})\Psi_t - \frac{\gamma}{4}F_t + \gamma^2 \frac{\sigma^2}{n} + \gamma^3\frac{3100L\sigma^2}{\delta_1^2\delta_2^2}
    \]
    where we note that $\frac{\gamma\mu}{2}\leq \frac{\delta_1}{8}$ and $\frac{\gamma\mu}{2}\leq \frac{\delta_2}{16}$.
\end{proof}
 
\begin{theorem}
    \label{thm:ef-double-contractive}
    Let $f$ be $\mu$-strongly quasi-convex around $\xx^\star$ and $L$-smooth. Let each $f_i$ be $L_i$-smooth. Let $\gamma \leq \frac{\delta_1\delta_2}{64\sqrt{2}\Tilde L}$. Then after at most
    \[
        T = \wtilde\cO\left(\frac{\sigma^2}{\mu n\varepsilon}
        + \frac{\sqrt{L}\sigma}{\mu\delta_1\delta_2 \varepsilon^{1/2}} 
        + \frac{\wtilde L}{\mu\delta_1\delta_2}\right)
    \]
    iterations of \Cref{alg:ef-double} it holds  $\Eb{f(\xx_{\text{out}})-f^\star}\leq \epsilon$, where $\xx_{\text{out}}$ is chosen randomly from $\xx_t\in \left\{\xx_0,\dots,\xx_{T}\right\}$ with probabilities proportional to $(1-\frac{\gamma\mu}{2})^{-(t+1)}$.
\end{theorem}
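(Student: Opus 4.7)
The heavy lifting has already been done in \Cref{lem:descent-Psi-double-contractive}, which establishes the one-step recursion
\[
    \Psi_{t+1} \le \left(1-\tfrac{\gamma\mu}{2}\right)\Psi_t - \tfrac{\gamma}{4} F_t + \gamma^2 \tfrac{\sigma^2}{n} + \gamma^3 \tfrac{3100 L \sigma^2}{\delta_1^2 \delta_2^2}
\]
for the Lyapunov function $\Psi_t = X_t + a H_t + b E_t$, valid under the stepsize constraint $\gamma \le \tfrac{\delta_1 \delta_2}{64\sqrt{2}\,\tilde L}$. The plan is therefore to feed this recursion directly into the abstract summation machinery of \Cref{lem:stich2020communication_lemma25} and its complexity consequence \Cref{remark:stich2020communication_lemma25}.

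Concretely, I would identify $r_t = \Psi_t$ and $s_t = F_t$, and match constants as $A = \mu/2$, $B = 1/4$, $C = \sigma^2/n$, $D = 3100 L \sigma^2/(\delta_1^2\delta_2^2)$, $E = 64\sqrt{2}\,\tilde L/(\delta_1\delta_2)$, and $F = 1$. Then \Cref{remark:stich2020communication_lemma25} yields
\[
    T = \wtilde{\cO}\!\left(\tfrac{C}{A\varepsilon} + \tfrac{\sqrt{D}}{A\sqrt{\varepsilon}} + \tfrac{1}{F} + \tfrac{E}{A}\right) = \wtilde{\cO}\!\left(\tfrac{\sigma^2}{\mu n \varepsilon} + \tfrac{\sqrt{L}\,\sigma}{\mu \delta_1 \delta_2 \sqrt{\varepsilon}} + \tfrac{\tilde L}{\mu \delta_1 \delta_2}\right),
\]
which is exactly the claimed complexity. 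The randomized output $\xx_{\text{out}}$ with weights $(1-\gamma\mu/2)^{-(t+1)}$ comes for free from the same lemma, since that is the weighting used to convert the telescoped sum into a bound on $\Eb{F_{\text{out}}}$.

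Two small bookkeeping points I would verify before applying the summation lemma: (i) that $\gamma\mu/2 \le 1$ so that $\min\{\gamma A, F\} = \gamma\mu/2$ (this follows from the prescribed stepsize bound once one notes $\tilde L \ge \mu$); and (ii) that $\Psi_0$ reduces to a quantity of the order $\|\xx_0-\xx^\star\|^2$ up to lower-order terms, using the initialization $\ee_0^i = \0$ and $\hh_0^i = \gg_0^i$. Since the analysis is carried out for $\mu$-strongly \emph{quasi}-convex $f$ around $\xx^\star$, $F_t \ge 0$ suffices and we do not need any additional convexity structure beyond what \Cref{lem:descent-X} already uses.

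I do not foresee any real obstacle: the difficulty in this section is entirely concentrated in \Cref{lem:descent-H-double-contractive} (bounding the bias-correction estimator descent with a general contractive compressor instead of an unbiased one), which has already been done. The theorem itself is just a mechanical invocation of the summation lemma, and the only thing to double-check is that the constants $a = 32b/(\delta_1\delta_2)$ and $b = 12 L\gamma^3/\delta_1$ chosen in the Lyapunov function do not introduce an additional initial condition term larger than $\|\xx_0 - \xx^\star\|^2$ into $\Psi_0$; this is straightforward since $H_0$ and $E_0$ are either zero or controlled by $\sigma^2$ under the stated initialization.
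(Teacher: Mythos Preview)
Your proposal is correct and follows exactly the paper's approach: the paper's own proof is a one-line invocation of \Cref{lem:descent-Psi-double-contractive}, \Cref{lem:stich2020communication_lemma25}, and Remark~\ref{remark:stich2020communication_lemma25}, and you have simply spelled out the constant matching and bookkeeping that this invocation entails.
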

\begin{proof}
    We need to apply the results of \Cref{lem:stich2020communication_lemma25}, \Cref{lem:descent-Psi-double-contractive} and Remark~\ref{remark:stich2020communication_lemma25}.
\end{proof}

We observe that \algname{D-EF-SGD} with double contracitve compression still achieves nearly optimal asymptotic complexity with stochastic gradients, where a $\sigma^2$ factor is hidden in the log terms. However, in the non-asymptotic regime it has poor dependency on compression parameters $\delta_1$ and $\delta_2.$ In the simplest full gradient case, when $\delta_1=\delta_2 = \delta$ and $\sigma^2=0,$ the linearly convergent term is proportional to $\delta^{-2}$. In opposite, \algname{EF21} and \algname{EControl} have only $\delta^{-1}$ dependency in this setting.

\section{Complexity of \algname{EF21-SGD}}
\label{sec:ef21}
In this section we consider \algname{EF21} mechanism. \citet{richtarik2021ef21} demonstrate that \algname{EF21-GD}, i.e. \algname{EF21} with full local gradient computations converges linearly. In the stochastic setting, it has been shown in \citep{fatkhullin2023momentum} that \algname{EF21-SGD} converges only with large batches. For completeness, we present convergence guarantees of \algname{EF21} with arbitrary batch size. In particular,  we show that \algname{EF21} can converge to an error of $\cO(\frac{\sigma^2L}{\delta^2\mu})$ in a log number of rounds. \algname{EF21} is summarized in \Cref{alg:ef21}.

\begin{algorithm}[h]
    \caption{EF21}
    \label{alg:ef21}
    \begin{algorithmic}[1]
        \State \textbf{Input:} $\xx_0,\! \hh_0^i=\nabla f_i(\xx_0)$, $\gamma$, and $\hh_t = \frac{1}{n}\sum_{i=1}^n\hh_t^i$
        \For{$t = 0,1,2,\dots$}
        \State $\gg_t^i = \gg_t^i$\hfill $\triangledown$ client side
        \State $\Delta_t^i = \cC_{\delta}(\gg_t^i - \hh^i)$
        \State $\hh_{t+1}^i = \hh_t^i + \Delta_t^i$
        \State send to server: $\Delta_t^i$ 
        \State $\xx_{t+1}:= \xx_t - \gamma  \hh_t $\hfill $\triangledown$ server side
        \State $\hh_{t+1}=\hh_t + \frac{1}{n}\sum_{i=1}^n\Delta_t^i$ 
        \EndFor
    \end{algorithmic}	
\end{algorithm}

Consider $F_t = f(\xx_t)-f^\star$ and $H_t = \frac{1}{n}\sum_{i=1}^{n}\norm{\nabla f_i(x_t)-\hh_t^i}^2$. The following lemmas are simple modifications of the lemmas in~\citep{PAGE2021} and~\citep{richtarik2021ef21} in the presence of stochasticity. Therefore, we state them without proofs.

\begin{lemma}[Lemma 2 from~\citet{PAGE2021}]
    Let $f$ be $\mu$-strongly convex and $L$-smooth. Then iterates of \algname{EF21-SGD} satisfy
    \begin{equation}
        F_{t+1} \leq (1-\gamma\mu)F_t 
        - \left(\frac{1}{2\gamma}-\frac{L}{2}\right)\Eb{\norm{\xx_{t+1}-\xx_t}^2}
        + (2-\delta)\gamma\sigma^2
        + (1-\delta)\gamma H_t.
    \end{equation}
\end{lemma}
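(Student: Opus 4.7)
The natural starting point is the $L$-smoothness inequality applied at $\xx_t$, which gives
\[
f(\xx_{t+1}) \leq f(\xx_t) + \langle \nabla f(\xx_t), \xx_{t+1}-\xx_t\rangle + \tfrac{L}{2}\|\xx_{t+1}-\xx_t\|^2.
\]
Substituting the update $\xx_{t+1}-\xx_t = -\gamma\hh_t$ and applying the polarization identity $-\gamma\langle a,b\rangle = -\tfrac{\gamma}{2}\|a\|^2 - \tfrac{\gamma}{2}\|b\|^2 + \tfrac{\gamma}{2}\|a-b\|^2$ with $a=\nabla f(\xx_t)$, $b=\hh_t$, I can rewrite the inner product term, use $\gamma\|\hh_t\|^2 = \gamma^{-1}\|\xx_{t+1}-\xx_t\|^2$, and combine with the $\tfrac{L}{2}\|\xx_{t+1}-\xx_t\|^2$ term to produce the $-\bigl(\tfrac{1}{2\gamma}-\tfrac{L}{2}\bigr)\|\xx_{t+1}-\xx_t\|^2$ coefficient. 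The $-\tfrac{\gamma}{2}\|\nabla f(\xx_t)\|^2$ contribution is then converted to the $(1-\gamma\mu)F_t$ contraction by $\mu$-strong convexity via $\|\nabla f(\xx_t)\|^2 \geq 2\mu F_t$. At this intermediate stage I will have obtained the PAGE-style bound
\[
F_{t+1} \leq (1-\gamma\mu)F_t - \bigl(\tfrac{1}{2\gamma}-\tfrac{L}{2}\bigr)\|\xx_{t+1}-\xx_t\|^2 + \tfrac{\gamma}{2}\|\nabla f(\xx_t)-\hh_t\|^2,
\]
which is valid for any estimator $\hh_t$ used in the update.

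The remaining and most delicate step is to bound $\tfrac{\gamma}{2}\Eb{\|\nabla f(\xx_t)-\hh_t\|^2}$ by $(2-\delta)\gamma\sigma^2 + (1-\delta)\gamma H_t$. My approach is to decompose $\nabla f_i(\xx_t)-\hh_t^i$ through the current stochastic aggregate by inserting $\pm\gg_t^i$, then exploit the fact that $\hh_t^i$ is measurable with respect to the past and is therefore independent of $\gg_t^i$ conditional on $\xx_t$. This conditional independence will make cross terms vanish in expectation, leaving identities such as $\Eb{\|\gg_t^i-\hh_t^i\|^2\mid\xx_t} = \sigma^2 + \|\nabla f_i(\xx_t)-\hh_t^i\|^2$, and the $\sigma^2$-type contributions account for the $(2-\delta)\gamma\sigma^2$ piece. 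The $(1-\delta)$ factor on $H_t$ must then come from folding in the contraction property $\Eb{\|\cC_\delta(\xx)-\xx\|^2}\leq(1-\delta)\|\xx\|^2$ of the compressor that is built into the EF21 update $\hh_{t+1}^i = \hh_t^i + \cC_\delta(\gg_t^i-\hh_t^i)$.

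The main obstacle is exactly this last splitting: a plain Jensen bound $\|\nabla f(\xx_t)-\hh_t\|^2 \leq H_t$ would yield a coefficient $\tfrac{1}{2}$ instead of $(1-\delta)$, so the stated sharper form requires a careful simultaneous use of (i) the gradient-noise variance decomposition, (ii) the vanishing of conditional cross terms, and (iii) the $(1-\delta)$ compression inequality applied to the appropriate residual. Getting the constants to land on $(2-\delta)$ and $(1-\delta)$ respectively rather than a loose $2$ and $1$ (as a Young-based split would give) is the delicate accounting step I would spend the most care on.
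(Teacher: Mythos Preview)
The paper does not supply a proof of this lemma; it is stated as a ``simple modification'' of a lemma from \citet{PAGE2021} and left unproven. Your derivation of the intermediate PAGE-type bound
\[
F_{t+1}\leq (1-\gamma\mu)F_t-\Bigl(\tfrac{1}{2\gamma}-\tfrac{L}{2}\Bigr)\Eb{\|\xx_{t+1}-\xx_t\|^2}+\tfrac{\gamma}{2}\,\Eb{\|\nabla f(\xx_t)-(\text{estimator})\|^2}
\]
is correct and standard. The gap is in the final step.

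Your plan is to bound $\tfrac{1}{2}\|\nabla f(\xx_t)-\hh_t\|^2$ by $(1-\delta)H_t+(2-\delta)\sigma^2$ by inserting $\pm\gg_t^i$ and invoking the compressor contraction. This cannot work as written: conditional on the history, $\nabla f_i(\xx_t)-\hh_t^i$ is \emph{deterministic}. Neither the time-$t$ stochastic gradient $\gg_t^i$ nor the time-$t$ compressor has been applied in forming $\hh_t^i$, so no splitting of this fixed vector can manufacture a $(1-\delta)$ factor or a $\sigma^2$ term. The compressor inequality you cite acts on $\gg_t^i-\hh_t^i$ and produces $\hh_{t+1}^i$, not $\hh_t^i$; it says nothing about $\nabla f_i(\xx_t)-\hh_t^i$.

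The resolution is an index shift. In standard \algname{EF21} (and in the paper's own Algorithm~\ref{alg:ef-concurrent}, where the server update is $\xx_{t+1}=\xx_t-\gamma\hh_t-\tfrac{\gamma}{n}\sum_i\Delta_t^i=\xx_t-\gamma\hh_{t+1}$), the step uses the \emph{freshly compressed} estimator; the ``$\hh_t$'' in Algorithm~\ref{alg:ef21} is evidently a typo for $\hh_{t+1}$. With $\xx_{t+1}-\xx_t=-\gamma\hh_{t+1}$ the residual decomposes as
\[
\nabla f_i(\xx_t)-\hh_{t+1}^i=\underbrace{\bigl(\nabla f_i(\xx_t)-\gg_t^i\bigr)}_{\text{noise}}+\underbrace{\bigl(\gg_t^i-\hh_t^i-\cC_\delta(\gg_t^i-\hh_t^i)\bigr)}_{\text{compression residual}},
\]
and a crude Young split $\|a+b\|^2\leq 2\|a\|^2+2\|b\|^2$, together with the bias--variance identity $\Eb{\|\gg_t^i-\hh_t^i\|^2\mid\text{history}}=\sigma^2+\|\nabla f_i(\xx_t)-\hh_t^i\|^2$, yields
\[
\Eb{\|\nabla f_i(\xx_t)-\hh_{t+1}^i\|^2}\leq 2\sigma^2+2(1-\delta)\bigl(\sigma^2+\|\nabla f_i(\xx_t)-\hh_t^i\|^2\bigr)=2(2-\delta)\sigma^2+2(1-\delta)\,\|\nabla f_i(\xx_t)-\hh_t^i\|^2.
\]
Averaging over $i$, applying Jensen, and multiplying by $\gamma/2$ gives exactly $(2-\delta)\gamma\sigma^2+(1-\delta)\gamma H_t$. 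So there is no ``delicate accounting'': the constants fall out immediately once the estimator index is correct.
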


\begin{lemma}[Lemma 7 from~\citet{richtarik2021ef21}]
    Let $f$ be $\mu$-strongly convex and $f_i$ be $L$-smooth for all $i\in[n]$. Then iterates of \algname{EF21-SGD} satisfy
    \begin{equation}
        \EEb{t}{H_{t+1}} \le \left(1-\frac{\delta}{4}\right)H_t + \frac{2L^2}{\delta}\Eb{\norm{\xx_{t+1}-\xx_t}^2} + \frac{3\sigma^2}{\delta}.    
    \end{equation}
\end{lemma}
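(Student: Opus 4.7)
The plan is to start from the update $\hh_{t+1}^i = \hh_t^i + \cC_\delta(\gg_t^i - \hh_t^i)$ and expand the squared norm defining $H_{t+1}$ by inserting $\pm \nabla f_i(\xx_t)$. Concretely, I would write
\[
\norm{\nabla f_i(\xx_{t+1}) - \hh_{t+1}^i}^2
= \norm{(\nabla f_i(\xx_{t+1}) - \nabla f_i(\xx_t)) + (\nabla f_i(\xx_t) - \hh_t^i - \cC_\delta(\gg_t^i - \hh_t^i))}^2
\]
and apply Young's inequality with a parameter $\beta>0$ to separate the ``smoothness movement'' piece from the ``compression-plus-noise residual at $\xx_t$'' piece. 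Per-coordinate $L_i$-smoothness (and hence $\wtilde L^2 \le L^2$ on average, or individual $L$-smoothness used for each term) bounds the first piece by $L^2 \norm{\xx_{t+1}-\xx_t}^2$.

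Next, for the residual term at $\xx_t$, I would rewrite
\[
\nabla f_i(\xx_t) - \hh_t^i - \cC_\delta(\gg_t^i - \hh_t^i)
= \bigl[\gg_t^i - \hh_t^i - \cC_\delta(\gg_t^i - \hh_t^i)\bigr] + \bigl[\nabla f_i(\xx_t) - \gg_t^i\bigr],
\]
apply a second Young's inequality with parameter $s>0$, and then take conditional expectation in two stages. First, the contractive property \eqref{def:compressor} gives $\E_{\cC}\norm{u - \cC_\delta(u)}^2 \le (1-\delta)\norm{u}^2$ with $u = \gg_t^i - \hh_t^i$. Then the variance decomposition under Assumption~\ref{asmp:bounded_variance} converts $\E\norm{\gg_t^i - \hh_t^i}^2$ into $\norm{\nabla f_i(\xx_t) - \hh_t^i}^2 + \sigma^2$, while the last cross piece contributes another $\sigma^2$. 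Averaging over $i\in[n]$ collects the $\norm{\nabla f_i(\xx_t) - \hh_t^i}^2$ contributions into $H_t$.

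The final step is a constant chase. Choose $\beta = 2/\delta - 1$ so that $(1+\beta)L^2 = 2L^2/\delta$, matching the stated movement coefficient. Then $1+\beta^{-1} = 2/(2-\delta)$, and one must pick $s$ of order $\delta$ (e.g.\ $s = \delta/(4(1-\delta))$) so that
\[
(1+\beta^{-1})(1+s)(1-\delta) \le 1 - \tfrac{\delta}{4},
\]
which is the stated contraction. The leftover noise terms $(1+\beta^{-1})\bigl[(1+s)(1-\delta)\sigma^2 + (1+s^{-1})\sigma^2\bigr]$ dominate at the $(1+s^{-1})\sigma^2 \approx 4\sigma^2/\delta$ scale times $(1+\beta^{-1}) \le 2$, and one verifies these collapse to at most $3\sigma^2/\delta$ (possibly with a slightly loosened constant or more careful choice of $s$).

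The main obstacle is the joint tuning of $\beta$ and $s$: making $\beta$ large enough to keep the movement coefficient at $2L^2/\delta$ forces $(1+\beta^{-1})$ close to $1 + \Theta(\delta)$, which almost cancels the $(1-\delta)$ contraction supplied by the compressor, leaving very little room to absorb the multiplicative $(1+s)$ cost of the stochastic split. Consequently $s$ must itself be $\Theta(\delta)$, which inflates $(1+s^{-1})$ to $\Theta(1/\delta)$ in the noise term and explains exactly why a $1/\delta$ factor (rather than a constant) appears on $\sigma^2$. Verifying the three target coefficients $1-\delta/4$, $2L^2/\delta$, and $3\sigma^2/\delta$ fit simultaneously is routine but requires careful bookkeeping.
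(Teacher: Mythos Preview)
The paper does not prove this lemma: it is stated without proof as a ``simple modification'' of Lemma~7 in \citet{richtarik2021ef21} to the stochastic setting. Your two-Young decomposition is the standard approach and is structurally correct; the only loose end is the noise constant, where your parameter choices yield roughly $4\sigma^2/\delta$ rather than $3\sigma^2/\delta$ for small $\delta$, as you already suspected.

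The cleaner fix exploits a feature of \algname{EF21} you did not use: the update $\xx_{t+1}=\xx_t-\gamma\hh_t$ is independent of the fresh sample $\gg_t^i$, so $\nabla f_i(\xx_{t+1})$ is deterministic given the time-$t$ filtration. Split directly as
\[
\nabla f_i(\xx_{t+1})-\hh_{t+1}^i
=\bigl(\gg_t^i-\hh_t^i-\cC_\delta(\gg_t^i-\hh_t^i)\bigr)+\bigl(\nabla f_i(\xx_{t+1})-\gg_t^i\bigr),
\]
apply a single Young inequality with $s=\tfrac{\delta}{2(1-\delta)}$, use contractivity on the first bracket, and use the exact variance identity $\EEb{t}{\norm{\nabla f_i(\xx_{t+1})-\gg_t^i}^2}=\norm{\nabla f_i(\xx_{t+1})-\nabla f_i(\xx_t)}^2+\sigma^2$ on the second (no second Young needed). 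This yields
\[
\EEb{t}{H_{t+1}}\le\Bigl(1-\tfrac{\delta}{2}\Bigr)H_t+\tfrac{2L^2}{\delta}\norm{\xx_{t+1}-\xx_t}^2+\Bigl(\tfrac{2-\delta}{\delta}+1-\tfrac{\delta}{2}\Bigr)\sigma^2,
\]
and the noise coefficient is at most $3/\delta$ since $\delta\le 1$, recovering the stated constants (in fact with the stronger contraction $1-\delta/2$).
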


Now consider $\Psi_t\eqdef F_t + a H_t$, where $a \eqdef \frac{100\gamma}{\delta}.$ Putting these two lemmas together, we have:

\begin{lemma}
    \label{lem:descent-Psi-ef21}
    Let $f$ be $\mu$-strongly convex and $f_i$ be $L$-smooth for all $i\in[n]$. Let $\gamma\leq \frac{\delta}{100L}$. Then iterates of \algname{EF21-SGD} satisfy
    \begin{equation}
        \EEb{t}{\Psi_{t+1}} \leq (1-c)\Psi_t - \frac{40L}{\delta}\EEb{t}{\norm{\xx_{t+1}-\xx_t}^2} + \gamma\left(2+\frac{300}{\delta^2}\right)\sigma^2
    \end{equation}
    where $c\eqdef\gamma\mu$. 
\end{lemma}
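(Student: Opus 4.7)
The strategy is a direct linear combination of the two preceding lemmas with the weight $a=\tfrac{100\gamma}{\delta}$, followed by careful collection of coefficients on each of the four quantities $F_t$, $H_t$, $\mathbb{E}_t\|\xx_{t+1}-\xx_t\|^2$, and $\sigma^2$. First I would take conditional expectation and add $a$ times the second inequality to the first, yielding
\[
\mathbb{E}_t[\Psi_{t+1}] \leq (1-\gamma\mu)F_t + \bigl((1-\delta)\gamma + a(1-\tfrac{\delta}{4})\bigr)H_t - \Bigl(\tfrac{1}{2\gamma}-\tfrac{L}{2}-\tfrac{2L^2 a}{\delta}\Bigr)\mathbb{E}_t\|\xx_{t+1}-\xx_t\|^2 + \bigl((2-\delta)\gamma + \tfrac{3a}{\delta}\bigr)\sigma^2.
\]
The $F_t$ term is already in the desired form since $(1-\gamma\mu)=(1-c)$.

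Next I would verify the $H_t$ coefficient contracts at rate $(1-c)a$. Substituting $a=100\gamma/\delta$, the inequality $(1-\delta)\gamma + a(1-\tfrac{\delta}{4}) \leq (1-\gamma\mu)a$ reduces to $(1-\delta)\gamma -25\gamma \leq -\tfrac{100\gamma^2\mu}{\delta}$, i.e.\ $(24+\delta)\delta \geq 100\gamma\mu$. This follows from $\gamma\leq \delta/(100L)$ and $\mu\leq L$, which give $100\gamma\mu \leq \delta \leq (24+\delta)\delta$. For the $\sigma^2$ coefficient, $(2-\delta)\gamma + \tfrac{3a}{\delta} \leq 2\gamma + \tfrac{300\gamma}{\delta^2} = \gamma(2+\tfrac{300}{\delta^2})$ is immediate.

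The main arithmetic step, which I expect to be the only one requiring some care, is bounding the coefficient of $\mathbb{E}_t\|\xx_{t+1}-\xx_t\|^2$. Plugging in $a$ gives the coefficient $-\tfrac{1}{2\gamma}+\tfrac{L}{2}+\tfrac{200\gamma L^2}{\delta^2}$. Under $\gamma\leq \delta/(100L)$ we have $\tfrac{1}{2\gamma}\geq \tfrac{50L}{\delta}$ and $\tfrac{200\gamma L^2}{\delta^2}\leq \tfrac{2L}{\delta}$, so the coefficient is at most $-\tfrac{50L}{\delta}+\tfrac{L}{2}+\tfrac{2L}{\delta} \leq -\tfrac{48L}{\delta}+\tfrac{L}{\delta} = -\tfrac{47L}{\delta}\leq -\tfrac{40L}{\delta}$, where I used $L/2 \leq L/\delta$ since $\delta\leq 1$.

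Assembling the four coefficient bounds gives exactly $\mathbb{E}_t[\Psi_{t+1}] \leq (1-c)\Psi_t - \tfrac{40L}{\delta}\mathbb{E}_t\|\xx_{t+1}-\xx_t\|^2 + \gamma(2+\tfrac{300}{\delta^2})\sigma^2$, completing the proof. There is no conceptual obstacle here; the only delicate point is that the stepsize restriction $\gamma\leq \delta/(100L)$ must be tight enough to dominate both the $L/2$ slack from smoothness in Lemma~1 and the $200\gamma L^2/\delta^2$ contribution from the $H_{t+1}$ descent, which is why the constant $100$ (rather than something smaller) appears in the stepsize bound.
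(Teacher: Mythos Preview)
Your proposal is correct and is exactly the argument the paper has in mind: the paper itself omits the proof and simply writes ``Putting these two lemmas together, we have'', so your direct linear combination with weight $a=\tfrac{100\gamma}{\delta}$ followed by coefficient checking is precisely the intended route. All four coefficient bounds you verify (on $F_t$, $H_t$, $\mathbb{E}_t\norm{\xx_{t+1}-\xx_t}^2$, and $\sigma^2$) are arithmetically sound under the stepsize restriction $\gamma\le\delta/(100L)$.
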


Solving the recursion, one can show that
\[
    \mu\Psi_{T+1} \leq \frac{e^{-\gamma\mu(T+1)}}{\gamma}\Psi_0 + \left(2+\frac{300}{\delta^2}\right)\sigma^2
\]
In particular, this means that $\Psi$ decreases to $\cO(\frac{\sigma^2}{\delta^2\mu})$ in $\cO\left(\frac{L}{\delta\mu} \log\frac{F_0\delta L}{\sigma^2\mu} \right)$ rounds. Therefore, \algname{EF21-SGD} can be used as warm up algorithm to find good approximation of $\hh_\star^i$. As we can see, the output of \algname{EF21-SGD} satisfies the restriction for \Cref{alg:ef-appx}. In particular, we show that at any iteration of \algname{EF21-SGD}, we have $\frac{1}{n}\sum_{i=1}^n \Eb{\norm{\hh_t^i -\nabla f_i(\xx^\star)}}\leq 4L\Psi_t$ if $\gamma$ and $a$ are chosen as in \Cref{lem:descent-Psi-ef21}.

\begin{lemma}
    \label{lem:quality-h}
    If $f_i$ is $L$-smooth and convex, and $\hh_t$ and $\xx_t$ are generated by EF21 with $\gamma = \frac{\delta}{100L}$, then
    \[
        \frac{1}{n}\sum_{i=1}^n \Eb{\norm{\hh_t^i -\nabla f_i(\xx^\star)}}\leq 4L\Psi_t
    \]
    where $\Psi_t=F_t+aH_t$ and $a=\frac{1}{\gamma}$.
\end{lemma}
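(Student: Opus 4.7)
The plan is to bound $\tfrac{1}{n}\sum_i \Eb{\|\hh_t^i - \nabla f_i(\xx^\star)\|^2}$ by splitting it, via Young's inequality with constant $1$, into a term measuring how well $\hh_t^i$ tracks the local gradient at the current iterate (which is exactly $H_t$) and a term measuring how far the local gradient at $\xx_t$ is from that at $\xx^\star$. Formally,
\[
\tfrac{1}{n}\sum_{i=1}^n \Eb{\|\hh_t^i - \nabla f_i(\xx^\star)\|^2} \;\le\; 2H_t \;+\; 2\cdot\tfrac{1}{n}\sum_{i=1}^n \Eb{\|\nabla f_i(\xx_t) - \nabla f_i(\xx^\star)\|^2}.
\]

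For the second term I would invoke the standard consequence of $L$-smoothness plus convexity of each $f_i$, namely
\[
\|\nabla f_i(\xx_t) - \nabla f_i(\xx^\star)\|^2 \;\le\; 2L\bigl(f_i(\xx_t) - f_i(\xx^\star) - \langle \nabla f_i(\xx^\star), \xx_t - \xx^\star\rangle\bigr).
\]
Averaging over $i$ and using $\nabla f(\xx^\star) = \tfrac{1}{n}\sum_i \nabla f_i(\xx^\star) = \0$ (first-order optimality for the unconstrained problem) collapses the linear term, yielding the clean bound $2L \Eb{f(\xx_t)-f^\star} = 2L F_t$, so the second term is at most $4L F_t$.

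Combining the two pieces gives $\tfrac{1}{n}\sum_i \Eb{\|\hh_t^i - \nabla f_i(\xx^\star)\|^2} \le 4L F_t + 2 H_t$. With $a = 1/\gamma = 100L/\delta$ and the mild regime $200L^2 \ge \delta$ (which holds under the standard convention $L \ge 1$, $\delta \le 1$), we have $2 \le 4L a$, hence $2H_t \le 4L\, aH_t$. Adding the two bounds gives
\[
\tfrac{1}{n}\sum_{i=1}^n \Eb{\|\hh_t^i - \nabla f_i(\xx^\star)\|^2} \;\le\; 4L\bigl(F_t + a H_t\bigr) \;=\; 4L \Psi_t,
\]
which is the claim. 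There is no real obstacle here: the only subtlety is checking that the constant in front of $H_t$ gets absorbed by $4L a$ for the stated choice of $\gamma$ and $a$; this is a one-line verification from $a = 100L/\delta$.
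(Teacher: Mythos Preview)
Your proof is correct and follows essentially the same approach as the paper's: split via $\|a+b\|^2 \le 2\|a\|^2 + 2\|b\|^2$, bound the gradient-difference term by co-coercivity of each $f_i$ and average over $i$ (using $\nabla f(\xx^\star)=\0$ to kill the linear term), arriving at $2H_t + 4LF_t$, then absorb $2H_t \le 4La\,H_t$ using $a = 100L/\delta$. The paper's argument is identical step for step; it simply asserts the final inequality $2H_t + 4LF_t \le 4L\Psi_t$ without writing out the verification that $2 \le 4La$.
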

\begin{proof}
    \begin{align*}
        \frac{1}{n} \sum_{i=1}^n\EEb{}{\norm{\hh_t^i-\nabla f_i(\xx^\star)}^2} & \leq \frac{2}{n}\sum_{i=1}^n\EEb{}{\norm{\hh_t^i-\nabla f_i(\xx_t)}^2} + \frac{2}{n} \sum_{i=1}^n\EEb{}{\norm{\nabla f_i(\xx_t)-\nabla f_i(\xx^\star)}^2} \\ 
        &\leq \frac{2}{n}\sum_{i=1}^n\EEb{}{\norm{\hh_t^i-\nabla f_i(\xx_t)}^2} \\
        &\quad + \frac{4L}{n}\sum_{i=1}^n\left( f_i(\xx_t) - f_i(\xx^\star) - \inp{\nabla f_i(\xx^\star)}{\xx_t-\xx^\star} \right) \\
        & = \frac{2}{n}\sum_{i=1}^n\EEb{}{\norm{\hh_t^i-\nabla f_i(\xx_t)}^2} + 4L(f(\xx_t)-f^\star)\\
        & = 2H_t + 4L F_t\\
        & \leq 4L\Psi_t \qedhere
    \end{align*}
\end{proof}

\end{document}